\tikzset{
    every node/.style={scale=.7}
}
\newcommand{\gap}{\hspace{1in} \\ \vspace{-.2in}}
\newcommand*{\Scale}[2][4]{\scalebox{#1}{$#2$}}%
\title[Unique rectification in $d$-complete posets]{Unique rectification in $d$-complete posets: towards the $K$-theory of Kac-Moody flag varieties}
\author{Rahul Ilango}
\address{Deparment of Mathematics, Rutgers University, Piscataway, NJ 08854}
\email{rahul.ilango@rutgers.edu}
\author{Oliver Pechenik}
\address{Department of Mathematics, University of Michigan, Ann Arbor, MI, 48109}
\email{pechenik@umich.edu}
\author{Michael Zlatin}
\address{Department of Mathematics, Rutgers University, Piscataway, NJ 08854}
\email{mikhaelzlatin@gmail.com}
\date{\today}
\theoremstyle{plain}
\newtheorem{theorem}{Theorem}[section]
\newtheorem{lemma}[theorem]{Lemma}
\newtheorem{corollary}[theorem]{Corollary}
\newtheorem{conjecture}[theorem]{Conjecture}
\newtheorem{proposition}[theorem]{Proposition}
\newtheorem*{claim*}{Claim}
\newtheorem{claim}{Claim}
\numberwithin{claim}{theorem}
\numberwithin{equation}{section}
\theoremstyle{definition}
\newtheorem{definition}[theorem]{Definition}
\newtheorem{remark}[theorem]{Remark}
\newenvironment{example}
  {\pushQED{\qed}\examplex}
  {\popQED\endexamplex}
\newcommand{\slantsum}[1]{\, _{#1}/ \, }
\newcommand{\range}{\ensuremath{\mathrm{Range}}}
\newcommand{\domain}{\ensuremath{\mathrm{Dom}}}
\newcommand{\ic}{\ensuremath{\mathrm{IC}}}
\begin{document}

\begin{abstract}
The jeu-de-taquin-based Littlewood-Richardson rule of H.~Thomas and A.~Yong (2009) for minuscule varieties has been extended in two orthogonal directions, either enriching the cohomology theory or else expanding the family of varieties considered. In one direction, A.~Buch and M.~Samuel (2016) developed a combinatorial theory of `unique rectification targets' in minuscule posets to extend the Thomas-Yong rule from ordinary cohomology to $K$-theory. Separately, P.-E.~Chaput and N.~Perrin (2012) used the combinatorics of R.~Proctor's `$d$-complete posets' to  extend the Thomas-Yong rule from minuscule varieties to a broader class of Kac-Moody structure constants.  We begin to address the unification of these theories. Our main result is the existence of unique rectification targets in a large class of $d$-complete posets. 
From this result, we obtain conjectural positive combinatorial formulas for certain $K$-theoretic Schubert structure constants in the Kac-Moody setting.
\end{abstract}

\maketitle

\section{Introduction}
The 1970s saw a major advance in the combinatorial approach to enumerative geometry when M.-P.~Sch\"{u}tzenberger proved the Littlewood-Richardson rule for describing the cohomology rings of Grassmannians. Since then, the modern Schubert calculus has turned to extending this understanding in two different directions: on the one hand to replace the Grassmannian with a more complicated homogeneous space, and on the other hand to replace ordinary cohomology with a richer generalized cohomology theory. Along these lines, the goal of this paper is to begin unraveling the $K$-theoretic Schubert calculus of Kac-Moody homogeneous spaces. Our results are purely combinatorial in nature, but allow us to conjecture explicit Littlewood-Richardson-style rules in this geometric context.

Let $G$ be a complex Kac-Moody group with Borel and opposite Borel subgroups $B_+$ and $B_-$, respectively.  Let $B_+ \subseteq P \subset G$ be a parabolic subgroup. The homogeneous space $X = G/P$ is a {\bf Kac-Moody flag variety}. The Zariski closures of the \mbox{$B_-$-orbits} are the {\bf Schubert varieties} $\{ X_w \}_{w \in W^P}$ and give a cell decomposition of $X$; here, $W^P$ denotes the set of minimal-length representatives of the quotient $W/W_P$, where $W$ is the Weyl group of $G$ and $W_P$ is the parabolic Weyl group for $P$.  The cohomology ring $H^\star(G/P)$ thereby has a distinguished Schubert basis $\{ \sigma_w \}_{w \in W^P}$, where $\sigma_w$ is Poincar\'e dual to $X_w$. Thus, to determine multiplication in $H^\star(X)$, it suffices to determine the {\bf Schubert structure constants} $c_{u,v}^w$ defined by 
\begin{equation}\label{eq:LR}
\sigma_u \cdot \sigma_v = \sum_{w \in W^P} c_{u,v}^w \sigma_w.
\end{equation}

In the case that $X = {\rm Gr}_k(\mathbb{C}^n)$ is a Grassmannian, the parameter space of $k$-dimensional linear subspaces of $\mathbb{C}^n$, this problem is solved in a positive combinatorial manner by any of the various Littlewood-Richardson rules (e.g., \cite{Littlewood.Richardson,Sch77,Vakil}). For a general Kac-Moody flag variety $X$, these $c_{u,v}^w$ are also non-negative integers, but it is generally a major open problem to give an analogous Littlewood-Richardson-style rule to determine them. 

For $X = {\rm Gr}_k(\mathbb{C}^n)$, M.-P.~Sch\"{u}tzenberger's Littlewood-Richardson rule is stated in terms of the \emph{jeu de taquin} for standard Young tableaux \cite{Sch77} fitting inside a $k \times (n-k)$ rectangle. One may realize this rectangle as a subposet of positive roots for ${\rm GL}_n(\mathbb{C})$ in such a way that the inversion set of $w \in W^P$ is an order ideal in this subposet. One may further realize standard Young tableaux as linear extensions of intervals in this poset. Using this perspective, H.~Thomas and A.~Yong \cite{Thomas.Yong:minuscule} gave a uniform extension of Sch\"{u}tzenberger's rule to compute all cohomological Schubert structure constants for the larger family of \emph{minuscule varieties}. This was further extended by P.-E.~Chaput and N.~Perrin \cite{CP12} to a positive combinatorial formula for computing certain \emph{$\Lambda$-minuscule} Schubert structure constants for general Kac-Moody $X$. In the Chaput-Perrin rule, the role of the $k \times (n-k)$ rectangle is played by the \emph{$d$-complete posets} introduced by R.~Proctor \cite{Proctor:JACO,Proctor:algebra}; $d$-complete posets are exactly those posets encoding the containment relations among $\Lambda$-minuscule Schubert varieties.

Much work in the modern Schubert calculus has been devoted to studying homogeneous spaces through richer cohomology theories. In these theories, there are Schubert bases analogous to the cohomological $\sigma_w$ and the structure constants defined analogously to Equation~(\ref{eq:LR}) enjoy various positivity properties. Hence, it makes sense to attempt to develop positive combinatorial formulas for these structure constants in the style of the classical Littlewood-Richardson rules. In the Grassmannian case, one has, for example: the equivariant cohomology rule of A.~Knutson and T.~Tao \cite{Knutson.Tao}, the $K$-theory rule of A.~Buch \cite{Buch:K}; the equivariant $K$-theory rule of O.~Pechenik and A.~Yong \cite{Pechenik.Yong:KT}; the quantum cohomology rule of A.~Buch, A.~Kresch, K.~Purbhoo, and H.~Tamvakis \cite{Buch.Kresch.Purbhoo.Tamvakis}; and the equivariant quantum cohomology rule of A.~Buch \cite{Buch:quantum}. Our interest is in the ordinary $K$-theory ring $K(X)$ of the Kac-Moody flag variety $X$, where the $K$-theoretic Schubert classes $\{[\mathcal{O}_{X_w}]\}_{w \in W^P}$ are represented by the structure sheaves of the Schubert varieties. Specifically, we are interested in the structure constants $K_{u,v}^w$ of $K(X)$ defined by
\begin{equation}\label{eq:KLR}
[\mathcal{O}_{X_u}] \cdot [\mathcal{O}_{X_v}] = \sum_{w \in W^P} K_{u,v}^w [\mathcal{O}_{X_w}].
\end{equation}

For Grassmannians, various alternatives to Buch's original rule \cite{Buch:K} for $K_{u,v}^w$ are now known \cite{Vakil,Thomas.Yong:K,Pechenik.Yong:genomic}. However, only the rule of H.~Thomas and A.~Yong \cite{Thomas.Yong:K} is currently known to extend to all of the minuscule varieties \cite{Buch.Ravikumar,Clifford.Thomas.Yong,BS16}. This Thomas-Yong rule is based on a jeu de taquin theory for \emph{increasing tableaux}. This combinatorial theory displays a number of additional subtleties when compared to Sch\"{u}tzenberger's jeu de taquin for standard tableaux. In particular, a key ingredient is the need to identify increasing tableaux with the \emph{unique rectification target} property. (These combinatorial notions are reviewed in Section~\ref{Section posets, skew shapes, and rectifications}.)

In \cite[Problem 9.1]{Thomas.Yong:K} and \cite[Remark 3.24]{BS16}, the authors ask to what extent their combinatorial theory extends to the case of $d$-complete posets. The missing ingredient is that it is not currently known whether general $d$-complete posets have ``enough'' unique rectification targets. We conjecture, however, that they do.
\begin{conjecture}\label{conj:URTs}
Let $\mathcal{P}$ be a $d$-complete poset and let $\lambda \subseteq \mathcal{P}$ be an order ideal. Then there is an (explicitly-defined) unique rectification target supported on $\lambda$.
\end{conjecture}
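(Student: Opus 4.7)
The plan is to exhibit an explicit candidate and verify the URT property via induction on the structure of $\mathcal{P}$. Following Buch-Samuel in the minuscule setting, the natural candidate is the \emph{minimal increasing tableau} $M_\lambda$, in which each cell $x \in \lambda$ is labeled by the length of the longest chain in $\lambda$ terminating at $x$; this is the entrywise smallest increasing labeling of $\lambda$ and is canonically determined by $\lambda$ alone.

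First, I would invoke Proctor's structure theory, which expresses every connected $d$-complete poset as an iterated slant-sum of irreducible pieces drawn from 15 explicit families. This suggests a two-part strategy. The reduction step would show that if $M_\lambda$ is a URT in each component of a slant-sum $\mathcal{P}_1 \slantsum{x} \mathcal{P}_2$, then the analogous statement holds in $\mathcal{P}_1 \slantsum{x} \mathcal{P}_2$ itself. Since slant-sums glue along a single element, a jeu de taquin slide in the sum should decompose into slides in each summand once one carefully tracks its behavior at the glue point. The base case then reduces to verifying URT in each of the 15 irreducible families; the minuscule families (rectangles, shifted staircases, and the $E_6, E_7$ minuscule posets) are handled by Buch-Samuel, so only the non-minuscule irreducibles (swivels, tailed insets, and the families containing $d_k$-intervals for $k > 2$) remain. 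For each such family, one would prove URT via a direct confluence argument: any two slide sequences from a fixed skew increasing tableau into straight shape $\lambda$ produce the same output, namely $M_\lambda$. By the local nature of jeu de taquin, this reduces to checking finitely many local configurations per family, using Proctor's $d$-complete coloring and the hook-length constraints to control how labels of $M_\lambda$ can move.

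The main obstacle is the confluence verification at $d_k$-intervals for $k > 2$. These are precisely the local configurations that distinguish general $d$-complete posets from minuscule posets, and they introduce branching patterns in the slide dynamics that could in principle produce distinct rectifications. A secondary obstacle is the slant-sum reduction itself: if a long sequence of slides eventually crosses the glue point in a way that cannot be cleanly factored through the two summands, the inductive step fails. I expect these obstacles to force the final theorem to fall short of the full conjecture, yielding URTs only in a ``large class'' of $d$-complete posets, for instance those whose irreducible pieces avoid the most intricate non-minuscule shapes, which is consistent with the main theorem advertised in the abstract.
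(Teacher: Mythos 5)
This statement is a \emph{conjecture}; the paper does not prove it in full, only the special case where $\mathcal{P}$ is built from minuscule posets (Theorem~\ref{thm:main}). You correctly anticipate this at the end of your proposal, and you identify the correct candidate ($M_\lambda$, the minimally-labeled increasing tableau) and the correct structural framework (Proctor's slant-sum decomposition into irreducible $d$-complete posets). So your overall outline tracks the paper reasonably well.

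However, your reduction step contains a genuine gap. You claim: ``if $M_\lambda$ is a URT in each component of a slant-sum $\mathcal{P}_1 \slantsum{x} \mathcal{P}_2$, then the analogous statement holds in the slant sum itself.'' This is \emph{false} as stated. The paper demonstrates (Remark~\ref{rem:URT_not_pchain}, building on Remark~\ref{Remark counterexample tail}) that the URT property is not preserved when one extends a poset by a new maximum --- which is precisely what slant-summing onto an element does from the perspective of the lower summand. Intuitively, the obstruction is that labels from the upper summand can slide down through the glue point in multiple ways, and knowing only that the restriction to the lower summand is a URT does not control this interaction. The paper's main technical innovation to repair this is the notion of a \emph{$p$-chain URT} (Section~\ref{Section slant sum}): a straight-shape tableau that remains a URT even after slant-summing arbitrary chains onto the designated nodes $p$. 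Lemma~\ref{Lemma poset "sees" chain} shows that from the lower summand's viewpoint, the upper summand ``looks like'' a chain, so the $p$-chain URT hypothesis is exactly the right strengthening, and Proposition~\ref{slant sums with AB-chain URTs} gives the correct inductive step. You would need to discover this strengthening (and then verify $p$-chain URTs rather than mere URTs in the base cases) for your argument to go through. Secondarily, your list of minuscule families omits double-tailed diamonds --- these are minuscule (corresponding to quadric hypersurfaces) and require separate treatment (Section~\ref{Section doubled tailed diamonds}), precisely because they do not embed in larger members of their own family the way rectangles and staircases do, so Lemma~\ref{irreducibles that make URTs into p-chain URTs} does not apply to them.
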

We initiate a study of the existence and structure of unique rectification targets in the $d$-complete posets. As shown by R.~Proctor \cite{Proctor:JACO}, every $d$-complete poset can be constructed by gluing together (in prescribed ways) certain irreducible $d$-complete posets. These irreducible pieces are classified in \cite{Proctor:JACO} and include all of the minuscule posets (i.e., the posets describing the Schubert stratification of minuscule varieties).
The informal version of our main result is the following special case of Conjecture~\ref{conj:URTs}.
\begin{theorem}\label{thm:main}
Conjecture~\ref{conj:URTs} holds in the case that $\mathcal{P}$ is built from minuscule posets.
\end{theorem}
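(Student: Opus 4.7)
The plan is to prove this by induction on the number of irreducible pieces in Proctor's slant-sum decomposition of $\mathcal{P}$. The base case, in which $\mathcal{P}$ is itself a single minuscule poset, is precisely the theorem of Buch and Samuel~\cite{BS16}, which furnishes an explicit URT on every order ideal $\lambda \subseteq \mathcal{P}$ (typically the \emph{minimal} increasing tableau on $\lambda$). Since the hypothesis of the theorem is that $\mathcal{P}$ is built from minuscule pieces, the induction will terminate at this base.

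For the inductive step, write $\mathcal{P} = \mathcal{P}' \slantsum{x} \mathcal{M}$, where $\mathcal{M}$ is an irreducible minuscule ``leaf'' of the slant-sum decomposition glued to the smaller poset $\mathcal{P}'$ at the element $x$. An arbitrary order ideal $\lambda \subseteq \mathcal{P}$ decomposes as $\lambda' \sqcup \lambda_\mathcal{M}$ with $\lambda' \subseteq \mathcal{P}'$ and $\lambda_\mathcal{M} \subseteq \mathcal{M}$ (and with $\lambda_\mathcal{M} \neq \emptyset$ forcing $x \in \lambda'$). Applying the inductive hypothesis to $\mathcal{P}'$ and the base case to $\mathcal{M}$ supplies explicit URTs $T'$ on $\lambda'$ and $T_\mathcal{M}$ on $\lambda_\mathcal{M}$. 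The proposed URT on $\lambda$ is the natural concatenation: place $T'$ on the $\mathcal{P}'$-side and append $T_\mathcal{M}$ with its entries shifted strictly above all entries of $T'$. This should coincide with the minimal increasing tableau on $\lambda$ viewed as an order ideal of $\mathcal{P}$, giving a uniform explicit construction.

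The verification that this is in fact a URT reduces to showing that $K$-theoretic jeu de taquin ``factors through'' the slant-sum decomposition: any inner slide on a skew increasing tableau with inner shape $\lambda$ should be expressible as a composition of slides each confined to $\mathcal{P}'$ or to $\mathcal{M}$. The local structure of the slant sum at $x$, where $\mathcal{M} \setminus \{x\}$ lies strictly above $x$ and is incomparable to the portion of $\mathcal{P}'$ above the ideal generated by $x$, means the two sides can interact only through $x$ itself. A commutation lemma should then show that slides on opposite sides of $x$ commute up to $K$-jeu-de-taquin equivalence, so that the rectification of any skew tableau can be rearranged to complete the $\mathcal{P}'$-portion before beginning the $\mathcal{M}$-portion. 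The URT property on $\lambda$ then follows from the separately-established URT properties of $T'$ and $T_\mathcal{M}$.

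The principal obstacle will be handling $K$-jeu-de-taquin slides that straddle $x$. Because the $K$-theoretic slide allows multiple boxes to carry the same label, a single slide can produce entries on both sides of $x$ simultaneously, so the desired factorization is not immediate. Establishing the commutation lemma will likely require a delicate local analysis at the attachment point, exploiting the bottleneck structure that Proctor's $d$-completeness imposes there, together with the combinatorial robustness properties of minimal URTs proved in~\cite{BS16}. A refined bookkeeping device that tracks partial rectification state in each irreducible component separately, and that behaves well under the ``shift'' used to build the candidate URT, is likely to be needed to close the induction.
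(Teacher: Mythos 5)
Your overall framework (induction on the number of irreducible slant-sum components, base case from Buch--Samuel, then verify that URTs on pieces assemble to a URT on the slant sum) is the paper's framework too, with one important divergence: you peel off a ``leaf'' $\mathcal{M}$, while the paper peels off the irreducible component $\mathcal{M}$ containing the poset minimum and treats all attached subtrees at once. Both decompositions are sensible, but your writeup has a gap that would prevent the induction from closing as stated.

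The crucial missing idea is that the URT property is \emph{not} preserved under slant sums, so the inductive hypothesis as you have formulated it is too weak. The paper's Remark~\ref{Remark counterexample tail} and Remark~\ref{rem:URT_not_pchain} exhibit a tableau that is a URT in one poset $\mathcal{P}$ but ceases to be a URT once a chain is slant-summed on; thus knowing that $T'$ is a URT on $\mathcal{P}'$ gives no control over what happens after entries from $\mathcal{M}$ leak across $x$. Your proposed commutation lemma (``slides on opposite sides of $x$ commute up to $K$-jeu-de-taquin equivalence'') is, in this strong form, false precisely because of that leakage. The paper replaces it with two ideas you would need to supply: (i) the notion of a \emph{$p$-chain URT} (Definition 4.4), i.e.\ a tableau that remains a URT even after arbitrary chains are slant-summed at specified nodes $p$, together with the observation (Lemma~\ref{Lemma poset "sees" chain}) that from $\mathcal{P}'$'s point of view the attached posets behave exactly like attached chains; and (ii) an assembly lemma (Proposition~\ref{slant sums with AB-chain URTs}) showing that if $T'$ is a $p$-chain URT on $\mathcal{P}'$ at the attachment node and $T_\mathcal{M}$ is (a $B$-chain) URT on $\mathcal{M}$, then their concatenation is again a ($D$-chain) URT, using the funnel/corresponding-tableau mechanism (Proposition~\ref{Theorem corresponding respects slides}) to handle the $\mathcal{M}$-portion. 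Strengthening your inductive hypothesis to ``$T'$ is an $A$-chain URT, $A$ the set of acyclic nodes,'' is exactly what makes the argument close, but it forces a new base case: one must show the minimally-labeled tableaux on the minuscule families are \emph{$p$-chain} URTs at their acyclic nodes, which the Buch--Samuel theorem alone does not give. For rectangles and shifted staircases this follows because attaching a chain at an acyclic node yields an order ideal in a larger member of the same family; for double-tailed diamonds it requires the separate analysis of chained double-tailed diamonds in Section~\ref{Section doubled tailed diamonds} (Proposition~\ref{Theorem chained double tailed diamond rectifies uniquely}). Without the $p$-chain strengthening and these augmented base cases, the last sentence of your inductive step (``the URT property on $\lambda$ then follows from the separately-established URT properties'') does not follow.
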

\begin{figure}[ht]
\begin{tikzpicture}[every node/.append style={circle, draw=black, inner sep=0pt, minimum size=10pt}, every draw/.append style={black, thick},anchor=base,baseline,node distance=.8cm]
    \node (Shape1a1) at (0,0) {};
    \node [above left of=Shape1a1] (Shape1a2) {};
    \node [above left of=Shape1a2] (Shape1a3) {};
    \node [above left of=Shape1a3] (Shape1a4) {};
    \node [above right of=Shape1a1](Shape1d1) {};
    \node [above left of=Shape1d1] (Shape1d2) {};
    \node [above left of=Shape1d2] (Shape1d3) {};
    \node [above left of=Shape1d3] (Shape1d4) {};
    \draw (Shape1a1)--(Shape1a2);
    \draw (Shape1a2)--(Shape1a3);
    \draw (Shape1a3)--(Shape1a4);
    
    \draw (Shape1d1)--(Shape1d2);
    \draw (Shape1d2)--(Shape1d3);
    \draw (Shape1d3)--(Shape1d4);
    \draw (Shape1a1)--(Shape1d1);
    \draw (Shape1a2)--(Shape1d2);
    \draw (Shape1a3)--(Shape1d3);
    \draw (Shape1a4)--(Shape1d4);

    \node [above left = 1cm and 2.8cm of Shape1a4](Bat1a1) {};
    \node [above left of=Bat1a1](Bat1a2)  {};
    \node [above left of=Bat1a2](Bat1a3)  {};
    \node [above left of=Bat1a3](Bat1a4)  {};
    \node [above left of=Bat1a4](Bat1a5)  {};
    \node [above left of=Bat1a5](Bat1a6)  {};
    \node [above right of=Bat1a4](Bat1b1)  {};
    \node [above left of=Bat1b1](Bat1b2)  {};
    \node [above left of=Bat1b2](Bat1b3)  {};
    \node [above right of=Bat1b2](Bat1c1)  {};
    \node [above left of=Bat1c1](Bat1c2)  {};
    \node [above left of=Bat1c2](Bat1c3)  {};
    \node [above right of=Bat1c1](Bat1d1)  {};
    \node [above left of=Bat1d1](Bat1d2)  {};
    \node [above left of=Bat1d2](Bat1d3)  {};
    \node [above left of=Bat1d3](Bat1d4)  {};
    \node [above left of=Bat1d4](Bat1d5)  {};
    \node [above right of=Bat1d1](Bat1e1)  {};
    \node [above left of=Bat1e1](Bat1e2)  {};
    \node [above left of=Bat1e2](Bat1e3)  {};
    \node [above left of=Bat1e3](Bat1e4)  {};
    \node [above left of=Bat1e4](Bat1e5)  {};
    \node [above right of=Bat1e4](Bat1f1)  {};
    \node [above left of=Bat1f1](Bat1f2)  {};
    \node [above of=Bat1f2](Bat1f3)  {};
    \node [above of=Bat1f3](Bat1f4)  {};
    \node [above of=Bat1f4](Bat1f5)  {};
    \draw (Shape1a4)--(Bat1a1);
    \draw (Bat1a1)--(Bat1a2);
    \draw (Bat1a2)--(Bat1a3);
    \draw (Bat1a3)--(Bat1a4);
    \draw (Bat1a4)--(Bat1a5);
    \draw (Bat1a5)--(Bat1a6);
    \draw (Bat1b1)--(Bat1b2);
    \draw (Bat1b2)--(Bat1b3);
    \draw (Bat1c1)--(Bat1c2);
    \draw (Bat1c2)--(Bat1c3);
    \draw (Bat1d1)--(Bat1d2);
    \draw (Bat1d2)--(Bat1d3);
    \draw (Bat1d3)--(Bat1d4);
    \draw (Bat1d4)--(Bat1d5);
    \draw (Bat1e1)--(Bat1e2);
    \draw (Bat1e2)--(Bat1e3);
    \draw (Bat1e3)--(Bat1e4);
    \draw (Bat1e4)--(Bat1e5);
    \draw (Bat1f1)--(Bat1f2);
    \draw (Bat1a4)--(Bat1b1);
    \draw (Bat1a5)--(Bat1b2);
    \draw (Bat1a6)--(Bat1b3);
    \draw (Bat1c1)--(Bat1b2);
    \draw (Bat1c2)--(Bat1b3);
    \draw (Bat1c1)--(Bat1d1);
    \draw (Bat1c2)--(Bat1d2);
    \draw (Bat1c3)--(Bat1d3);
    \draw (Bat1e1)--(Bat1d1);
    \draw (Bat1e2)--(Bat1d2);
    \draw (Bat1e3)--(Bat1d3);
    \draw (Bat1e4)--(Bat1d4);
    \draw (Bat1e5)--(Bat1d5);
    \draw (Bat1e4)--(Bat1f1);
    \draw (Bat1e5)--(Bat1f2);
    \draw (Bat1f3)--(Bat1f2);
    \draw (Bat1f3)--(Bat1f4);
    \draw (Bat1f5)--(Bat1f4);

	\node [above = 1cm of Shape1a4] (Diamond1b2)  {};
    \node [circle,above of=Diamond1b2] (Diamond1b0) {};
    \node [circle,above of=Diamond1b0] (Diamond1bottom) {};
    \node [circle,above left of=Diamond1bottom] (Diamond1left)  {};
    \node [circle,above right of=Diamond1bottom] (Diamond1right) {};
    \node [circle, above right of=Diamond1left] (Diamond1top) {};
    \node [circle,above of =Diamond1top] (Diamond1t0)  {};
    \node [circle, above of =Diamond1t0] (Diamond1t2)  {};
    \draw (Diamond1t2) -- (Diamond1t0);
    \draw (Diamond1top) -- (Diamond1t0);
    \draw (Diamond1top) -- (Diamond1left);
    \draw (Diamond1top) -- (Diamond1right);
    \draw  (Diamond1right) -- (Diamond1bottom);
    \draw (Diamond1left) -- (Diamond1bottom);
    \draw  (Diamond1b0) -- (Diamond1bottom);
    \draw  (Diamond1b2) -- (Diamond1b0);
    \draw (Diamond1b2)--(Shape1a4);
    
    \node [above left= 1cm of Diamond1left] (ShiftedShape1a1) {};
  \node [above left of=ShiftedShape1a1] (ShiftedShape1b1) {};
    \node [above left of=ShiftedShape1b1] (ShiftedShape1c1) {};
    \node [above left of=ShiftedShape1c1] (ShiftedShape1d1) {};
    \node [above left of=ShiftedShape1d1] (ShiftedShape1e1) {};
    \node [above left of=ShiftedShape1e1] (ShiftedShape1f1) {};
    \node [above right of=ShiftedShape1b1] (ShiftedShape1b2) {};
    \node [above right of=ShiftedShape1c1] (ShiftedShape1c2) {};
    \node [above right of=ShiftedShape1c2] (ShiftedShape1c3) {};
    \node [above right of=ShiftedShape1d1] (ShiftedShape1d2) {};
    \node [above right of=ShiftedShape1d2] (ShiftedShape1d3) {};
    \node [above right of=ShiftedShape1d3] (ShiftedShape1d4) {};
    \node [above right of=ShiftedShape1e1] (ShiftedShape1e2) {};
    \node [above right of=ShiftedShape1e2] (ShiftedShape1e3) {};
    \node [above right of=ShiftedShape1e3] (ShiftedShape1e4) {};
    \node [above right of=ShiftedShape1e4] (ShiftedShape1e5) {};
    \node [above right of=ShiftedShape1f1] (ShiftedShape1f2) {};
    \node [above right of=ShiftedShape1f2] (ShiftedShape1f3) {};
    \node [above right of=ShiftedShape1f3] (ShiftedShape1f4) {};
    \node [above right of=ShiftedShape1f4] (ShiftedShape1f5) {};

    \draw (ShiftedShape1a1) -- (ShiftedShape1b1);
    \draw (ShiftedShape1b1) -- (ShiftedShape1c1);
    \draw (ShiftedShape1c1) -- (ShiftedShape1d1);
    \draw (ShiftedShape1d1) -- (ShiftedShape1e1);
    \draw (ShiftedShape1e1) -- (ShiftedShape1f1);
    \draw (ShiftedShape1b2) -- (ShiftedShape1c2);
    \draw (ShiftedShape1c2) -- (ShiftedShape1d2);
    \draw (ShiftedShape1d2) -- (ShiftedShape1e2);
    \draw (ShiftedShape1e2) -- (ShiftedShape1f2);
    \draw (ShiftedShape1c3) -- (ShiftedShape1d3);
    \draw (ShiftedShape1d3) -- (ShiftedShape1e3);
    \draw (ShiftedShape1d4) -- (ShiftedShape1e4);
    \draw (ShiftedShape1e3) -- (ShiftedShape1f3);
    \draw (ShiftedShape1e4) -- (ShiftedShape1f4);
    \draw (ShiftedShape1e5) -- (ShiftedShape1f5);
    \draw (ShiftedShape1b1) -- (ShiftedShape1b2);
    \draw (ShiftedShape1c1) -- (ShiftedShape1c2);
    \draw (ShiftedShape1c3) -- (ShiftedShape1c2);
    \draw (ShiftedShape1d1) -- (ShiftedShape1d2);
    \draw (ShiftedShape1d3) -- (ShiftedShape1d2);
    \draw (ShiftedShape1d3) -- (ShiftedShape1d4);
    \draw (ShiftedShape1e1) -- (ShiftedShape1e2);
    \draw (ShiftedShape1e3) -- (ShiftedShape1e2);
    \draw (ShiftedShape1e3) -- (ShiftedShape1e4);
    \draw (ShiftedShape1e5) -- (ShiftedShape1e4);
    \draw (ShiftedShape1f1) -- (ShiftedShape1f2);
    \draw (ShiftedShape1f3) -- (ShiftedShape1f2);
    \draw (ShiftedShape1f3) -- (ShiftedShape1f4);
    \draw (ShiftedShape1f5) -- (ShiftedShape1f4);
    \draw (Diamond1left) -- (ShiftedShape1a1);
    
    \node [above right= 1cm of Diamond1right](Cayley1a1){};
    \node [above right of=Cayley1a1](Cayley1a2)  {};
    \node [above right of=Cayley1a2](Cayley1a3)  {};
    \node [above right of=Cayley1a3](Cayley1a4)  {};
    \node [above right of=Cayley1a4](Cayley1a5)  {};
    \node [above left of=Cayley1a3](Cayley1b1)  {};
    \node [above right of=Cayley1b1](Cayley1b2)  {};
    \node [above right of=Cayley1b2](Cayley1b3)  {};
    \node [above left of=Cayley1b2](Cayley1c1)  {};
    \node [above right of=Cayley1c1](Cayley1c2)  {};
    \node [above right of=Cayley1c2](Cayley1c3)  {};
    \node [above left of=Cayley1c1](Cayley1d1)  {};
    \node [above right of=Cayley1d1](Cayley1d2)  {};
    \node [above right of=Cayley1d2](Cayley1d3)  {};
    \draw (Cayley1a1)--(Cayley1a2);
    \draw (Cayley1a2)--(Cayley1a3);
    \draw (Cayley1a3)--(Cayley1a4);
    \draw (Cayley1a4)--(Cayley1a5);
    \draw (Cayley1b1)--(Cayley1b2);
    \draw (Cayley1b2)--(Cayley1b3);
    \draw (Cayley1c1)--(Cayley1c2);
    \draw (Cayley1c2)--(Cayley1c3);
    \draw (Cayley1d1)--(Cayley1d2);
    \draw (Cayley1d2)--(Cayley1d3);
    \draw (Cayley1a3)--(Cayley1b1);
    \draw (Cayley1a4)--(Cayley1b2);
    \draw (Cayley1a5)--(Cayley1b3);
    \draw (Cayley1c1)--(Cayley1b2);
    \draw (Cayley1c2)--(Cayley1b3);
    \draw (Cayley1c1)--(Cayley1d1);
    \draw (Cayley1c2)--(Cayley1d2);
    \draw (Cayley1c3)--(Cayley1d3);
	\draw (Diamond1right) -- (Cayley1a1);
    
    \node [above right =1cm and .8cm of Shape1d1](Cayley2a1) {};
    \node [above of=Cayley2a1](Cayley2a2)  {};
    \node [above of=Cayley2a2](Cayley2a3)  {};
    \node [above left of=Cayley2a3](Cayley2a4)  {};
    \node [above left of=Cayley2a4](Cayley2a5)  {};
    \node [above right of=Cayley2a3](Cayley2b1)  {};
    \node [above left of=Cayley2b1](Cayley2b2)  {};
    \node [above left of=Cayley2b2](Cayley2b3)  {};
    \node [above right of=Cayley2b2](Cayley2c1)  {};
    \node [above left of=Cayley2c1](Cayley2c2)  {};
    \node [above left of=Cayley2c2](Cayley2c3)  {};
    \node [above right of=Cayley2c1](Cayley2d1)  {};
    \node [above left of=Cayley2d1](Cayley2d2)  {};
    \node [above left of=Cayley2d2](Cayley2d3)  {};
    \node [above right of=Cayley2d3](Cayley2d4)  {};
    \draw (Cayley2a1)--(Cayley2a2);
    \draw (Cayley2a2)--(Cayley2a3);
    \draw (Cayley2a3)--(Cayley2a4);
    \draw (Cayley2a4)--(Cayley2a5);
    \draw (Cayley2b1)--(Cayley2b2);
    \draw (Cayley2b2)--(Cayley2b3);
    \draw (Cayley2c1)--(Cayley2c2);
    \draw (Cayley2c2)--(Cayley2c3);
    \draw (Cayley2d1)--(Cayley2d2);
    \draw (Cayley2d2)--(Cayley2d3);
    \draw (Cayley2d3)--(Cayley2d4);
    \draw (Cayley2a3)--(Cayley2b1);
    \draw (Cayley2a4)--(Cayley2b2);
    \draw (Cayley2a5)--(Cayley2b3);
    \draw (Cayley2c1)--(Cayley2b2);
    \draw (Cayley2c2)--(Cayley2b3);
    \draw (Cayley2c1)--(Cayley2d1);
    \draw (Cayley2c2)--(Cayley2d2);
    \draw (Cayley2c3)--(Cayley2d3);
    \draw (Shape1d1)--(Cayley2a1);
    
    \node [above right =1cm and 2.1cm of Shape1d1](ShiftedShape2a1)  {};
    \node [above right of=ShiftedShape2a1] (ShiftedShape2b1) {};
    \node [above right of=ShiftedShape2b1] (ShiftedShape2c1) {};
    \node [above right of=ShiftedShape2c1] (ShiftedShape2d1) {};
    \node [above right of=ShiftedShape2d1] (ShiftedShape2e1) {};
    \node [above right of=ShiftedShape2e1] (ShiftedShape2f1) {};
    \node [above left of=ShiftedShape2b1] (ShiftedShape2b2) {};
    \node [above left of=ShiftedShape2c1] (ShiftedShape2c2) {};
    \node [above left of=ShiftedShape2c2] (ShiftedShape2c3) {};
    \node [above left of=ShiftedShape2d1] (ShiftedShape2d2) {};
    \node [above left of=ShiftedShape2d2] (ShiftedShape2d3) {};
    \node [above left of=ShiftedShape2d3] (ShiftedShape2d4) {};
    \node [above left of=ShiftedShape2e1] (ShiftedShape2e2) {};
    \node [above left of=ShiftedShape2e2] (ShiftedShape2e3) {};
    \node [above left of=ShiftedShape2e3,] (ShiftedShape2e4) {};
    \node [above left of=ShiftedShape2e4] (ShiftedShape2e5) {};
    \node [above left of=ShiftedShape2f1] (ShiftedShape2f2) {};
    \node [above left of=ShiftedShape2f2] (ShiftedShape2f3) {};
    \node [above left of=ShiftedShape2f3] (ShiftedShape2f4) {};
    \node [above left of=ShiftedShape2f4] (ShiftedShape2f5) {};
    \node [above left of=ShiftedShape2f5] (ShiftedShape2f6) {};
    \draw (ShiftedShape2a1) -- (ShiftedShape2b1);
    \draw (ShiftedShape2b1) -- (ShiftedShape2c1);
    \draw (ShiftedShape2c1) -- (ShiftedShape2d1);
    \draw (ShiftedShape2d1) -- (ShiftedShape2e1);
    \draw (ShiftedShape2e1) -- (ShiftedShape2f1);
    \draw (ShiftedShape2b2) -- (ShiftedShape2c2);
    \draw (ShiftedShape2c2) -- (ShiftedShape2d2);
    \draw (ShiftedShape2d2) -- (ShiftedShape2e2);
    \draw (ShiftedShape2e2) -- (ShiftedShape2f2);
    \draw (ShiftedShape2c3) -- (ShiftedShape2d3);
    \draw (ShiftedShape2d3) -- (ShiftedShape2e3);
    \draw (ShiftedShape2e3) -- (ShiftedShape2f3);
    \draw (ShiftedShape2e5) -- (ShiftedShape2f5);
    \draw (ShiftedShape2b1) -- (ShiftedShape2b2);
    \draw (ShiftedShape2c1) -- (ShiftedShape2c2);
    \draw (ShiftedShape2c3) -- (ShiftedShape2c2);
    \draw (ShiftedShape2d1) -- (ShiftedShape2d2);
    \draw (ShiftedShape2d3) -- (ShiftedShape2d2);
    \draw (ShiftedShape2d3) -- (ShiftedShape2d4);
    \draw (ShiftedShape2e1) -- (ShiftedShape2e2);
    \draw (ShiftedShape2e3) -- (ShiftedShape2e2);
    \draw (ShiftedShape2e3) -- (ShiftedShape2e4);
    \draw (ShiftedShape2e5) -- (ShiftedShape2e4);
    \draw (ShiftedShape2f1) -- (ShiftedShape2f2);
    \draw (ShiftedShape2f3) -- (ShiftedShape2f2);
    \draw (ShiftedShape2f3) -- (ShiftedShape2f4);
    \draw (ShiftedShape2f5) -- (ShiftedShape2f4);
    \draw (ShiftedShape2f5) -- (ShiftedShape2f6);
    \draw (ShiftedShape2d4) -- (ShiftedShape2e4);
    \draw (ShiftedShape2e4) -- (ShiftedShape2f4);
    \draw (Shape1d1)--(ShiftedShape2a1);

	\node [above = 0.9cm of ShiftedShape2f1](Bat2a1){};
    \node [above left of=Bat2a1](Bat2a2)  {};
    \node [above left of=Bat2a2](Bat2a3)  {};
    \node [above left of=Bat2a3](Bat2a4)  {};
    \node [above left of=Bat2a4](Bat2a5)  {};
    \node [above left of=Bat2a5](Bat2a6)  {};
    \node [above right of=Bat2a4](Bat2b1)  {};
    \node [above left of=Bat2b1](Bat2b2)  {};
    \node [above left of=Bat2b2](Bat2b3)  {};
    \node [above right of=Bat2b2](Bat2c1)  {};
    \node [above left of=Bat2c1](Bat2c2)  {};
    \node [above left of=Bat2c2](Bat2c3)  {};
    \node [above right of=Bat2c1](Bat2d1)  {};
    \node [above left of=Bat2d1](Bat2d2)  {};
    \node [above left of=Bat2d2](Bat2d3)  {};
    \node [above left of=Bat2d3](Bat2d4)  {};
    \node [above left of=Bat2d4](Bat2d5)  {};
    \node [above right of=Bat2d1](Bat2e1)  {};
    \node [above left of=Bat2e1](Bat2e2)  {};
    \node [above left of=Bat2e2](Bat2e3)  {};
    \node [above left of=Bat2e3](Bat2e4)  {};
    \node [above left of=Bat2e4](Bat2e5)  {};
    \node [above right of=Bat2e4](Bat2f1)  {};
    \node [above left of=Bat2f1](Bat2f2)  {};
    \draw (Bat2a1)--(Bat2a2);
    \draw (Bat2a2)--(Bat2a3);
    \draw (Bat2a3)--(Bat2a4);
    \draw (Bat2a4)--(Bat2a5);
    \draw (Bat2a5)--(Bat2a6);
    \draw (Bat2b1)--(Bat2b2);
    \draw (Bat2b2)--(Bat2b3);
    \draw (Bat2c1)--(Bat2c2);
    \draw (Bat2c2)--(Bat2c3);
    \draw (Bat2d1)--(Bat2d2);
    \draw (Bat2d2)--(Bat2d3);
    \draw (Bat2d3)--(Bat2d4);
    \draw (Bat2d4)--(Bat2d5);
    \draw (Bat2e1)--(Bat2e2);
    \draw (Bat2e2)--(Bat2e3);
    \draw (Bat2e3)--(Bat2e4);
    \draw (Bat2e4)--(Bat2e5);
    \draw (Bat2f1)--(Bat2f2);
    \draw (Bat2a4)--(Bat2b1);
    \draw (Bat2a5)--(Bat2b2);
    \draw (Bat2a6)--(Bat2b3);
    \draw (Bat2c1)--(Bat2b2);
    \draw (Bat2c2)--(Bat2b3);
    \draw (Bat2c1)--(Bat2d1);
    \draw (Bat2c2)--(Bat2d2);
    \draw (Bat2c3)--(Bat2d3);
    \draw (Bat2e1)--(Bat2d1);
    \draw (Bat2e2)--(Bat2d2);
    \draw (Bat2e3)--(Bat2d3);
    \draw (Bat2e4)--(Bat2d4);
    \draw (Bat2e5)--(Bat2d5);
    \draw (Bat2e4)--(Bat2f1);
    \draw (Bat2e5)--(Bat2f2);
    \draw (ShiftedShape2f1)--(Bat2a1);
    
    \node [above right= 1cm and 1cm of ShiftedShape2f1](Shape2a1) {};
    \node [above left of=Shape2a1] (Shape2a2) {};
    \node [above left of=Shape2a2] (Shape2a3) {};
    \node [above left of=Shape2a3] (Shape2a4) {};
    \node [above right of=Shape2a1](Shape2b1) {};
    \node [above left of=Shape2b1] (Shape2b2) {};
    \node [above left of=Shape2b2] (Shape2b3) {};
    \node [above left of=Shape2b3] (Shape2b4) {};
    \node [above right of=Shape2b1](Shape2c1) {};
    \node [above left of=Shape2c1] (Shape2c2) {};
    \node [above left of=Shape2c2] (Shape2c3) {};
    \node [above left of=Shape2c3] (Shape2c4) {};
    \node [above right of=Shape2c1](Shape2d1) {};
    \node [above left of=Shape2d1] (Shape2d2) {};
    \node [above left of=Shape2d2] (Shape2d3) {};
    \node [above left of=Shape2d3] (Shape2d4) {};
    \draw (Shape2a1)--(Shape2a2);
    \draw (Shape2a2)--(Shape2a3);
    \draw (Shape2a3)--(Shape2a4);
    \draw (Shape2b1)--(Shape2b2);
    \draw (Shape2b2)--(Shape2b3);
    \draw (Shape2b3)--(Shape2b4);
    \draw (Shape2c1)--(Shape2c2);
    \draw (Shape2c2)--(Shape2c3);
    \draw (Shape2c3)--(Shape2c4);
    \draw (Shape2d1)--(Shape2d2);
    \draw (Shape2d2)--(Shape2d3);
    \draw (Shape2d3)--(Shape2d4);
    \draw (Shape2a1)--(Shape2b1);
    \draw (Shape2a2)--(Shape2b2);
    \draw (Shape2a4)--(Shape2b4);
    \draw (Shape2c1)--(Shape2b1);
    \draw (Shape2c2)--(Shape2b2);
    \draw (Shape2c4)--(Shape2b4);
    \draw (Shape2c1)--(Shape2d1);
    \draw (Shape2c2)--(Shape2d2);
    \draw (Shape2c4)--(Shape2d4);
    \draw (Shape2a3)--(Shape2b3);
    \draw (Shape2c3)--(Shape2b3);
    \draw (Shape2c3)--(Shape2d3);
    \draw (ShiftedShape2f1)--(Shape2a1);
\end{tikzpicture}
\caption{The Hasse diagram of a representative $d$-complete poset $\mathcal{P}$ that is ``built from minuscule posets'' in the sense of Theorem~\ref{thm:main}. In $\mathcal{P}$, every order ideal $\lambda \subseteq \mathcal{P}$ has a unique rectification target, provided by Theorem~\ref{thm:main}.}
\label{fig:big_poset}
\end{figure}
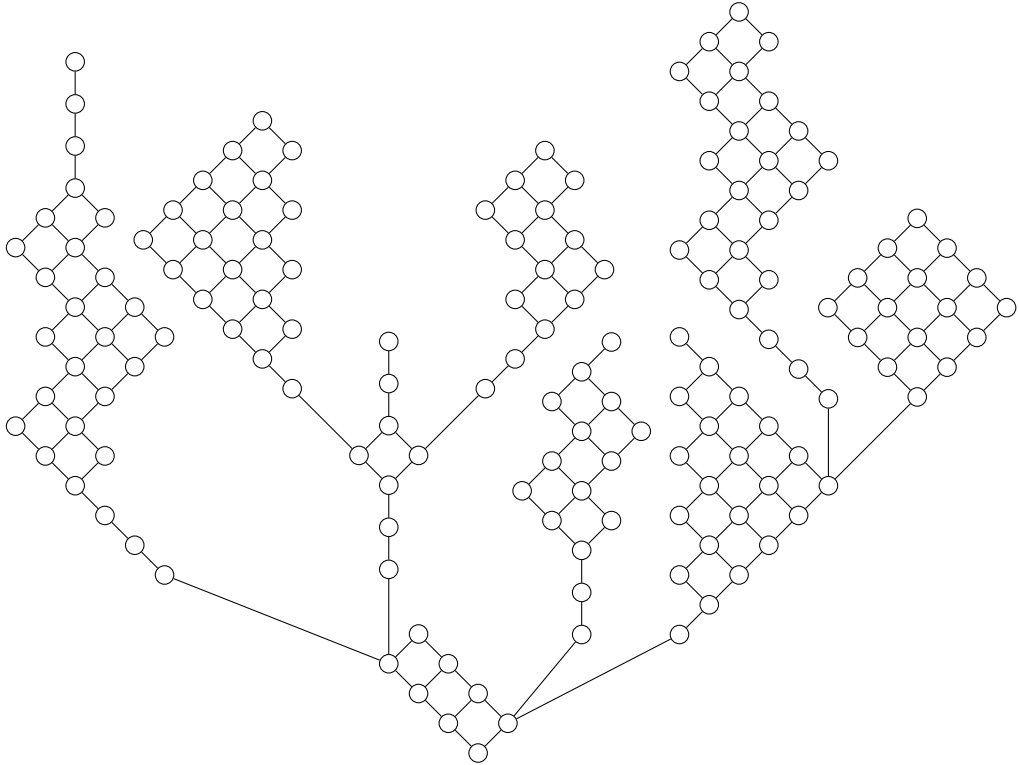

For an example of such a poset covered by Theorem~\ref{thm:main}, see Figure~\ref{fig:big_poset}.
We also demonstrate the extent to which Conjecture~\ref{conj:URTs} is sensitive to the poset $\mathcal{P}$ being $d$-complete. We establish general results on the failure of Conjecture~\ref{conj:URTs} to extend to posets that are slight deformations of $d$-complete posets.

For any $\mathcal{P}$ satisfying Conjecture~\ref{conj:URTs}, one obtains (as in \cite[\textsection 3.5]{BS16}) a corresponding combinatorially-defined associative commutative unital algebra $K(\mathcal{P})$ with a basis $\{ \lambda \}$ indexed by order ideals of $\mathcal{P}$. The structure constants $t_{\lambda,\mu}^\nu$ of $K(\mathcal{P})$ are defined in such a way as to transparently alternate in degree. (This construction is discussed in Section~\ref{Section $d$-complete posets}.) 
For $w \in W^P$ $\Lambda$-minuscule, the interval $[{\rm id}, w]$ in Bruhat order is isomorphic to the poset of order ideals of a certain $d$-complete poset $\mathcal{P}_w$ constructed from $w$.
Building on Conjecture~\ref{conj:URTs}, we propose the following.

\begin{conjecture}\label{conj:geometry}
Let $X=G/P$ be a Kac-Moody flag variety and let $m \in W^P$ be $\Lambda$-minuscule. 
Then, for $u,v,w \leq m$ in Bruhat order, we have the equality of structure constants \[
K_{u,v}^w  = t_{\lambda,\mu}^\nu, 
\]
between the rings $K(X)$ and $K(\mathcal{P}_m)$,
where the order ideals $\lambda, \mu, \nu \subseteq \mathcal{P}_m$ correspond to the the Weyl group elements $u,v,w \in W^P$, respectively.
\end{conjecture}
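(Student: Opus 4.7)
The plan is to upgrade the cohomological theorem of Chaput-Perrin \cite{CP12} to $K$-theory, using the framework of increasing tableaux and unique rectification targets developed by Buch-Samuel \cite{BS16} for minuscule varieties. Granting Conjecture~\ref{conj:URTs} (partially established by Theorem~\ref{thm:main}), the algebra $K(\mathcal{P}_m)$ is well-defined, and its structure constants $t_{\lambda,\mu}^\nu$ are signed counts of increasing tableaux rectifying to a fixed URT. The present conjecture asks that these counts compute the $K$-theoretic Schubert structure constants $K_{u,v}^w$, so the proof should match both sides to a common geometric computation.

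First I would set up a Bott-Samelson resolution $Z_m \to X_m$ determined by a reduced word for the $\Lambda$-minuscule element $m$. For such $m$, all reduced words differ by commutation moves only, which yields Bott-Samelson varieties with especially well-behaved projections onto factors; the cohomological argument of \cite{CP12} essentially reinterprets standard-tableau jeu-de-taquin as the combinatorics of these projections. I would then develop a $K$-theoretic analog: interpret each Demazure-type $K$-theoretic pushforward along a factor as the effect of a jeu-de-taquin slide on an increasing tableau, with the additional short slides of increasing jeu-de-taquin accounting for higher-degree $K$-theoretic terms arising from excess intersection. Matching the minimal-degree contributions then follows from \cite{CP12}, so the substantive step is matching the higher-degree corrections.

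To match these higher-degree terms, I would attempt an inductive reduction to the minuscule case, where \cite{BS16} supplies the desired identification. Proctor's classification \cite{Proctor:JACO} exhibits $\mathcal{P}_m$ as a slant sum of irreducible $d$-complete pieces, and one would analyze how both the combinatorial product on $K(\mathcal{P})$ and the geometric $K$-product respect this decomposition, propagating the minuscule identity to the general $\Lambda$-minuscule case. The chief obstacle is that a generic $\Lambda$-minuscule $X_m$ is not itself a projection from a minuscule flag variety, so the clean geometric reductions available in \cite{BS16} do not apply; instead one must control the higher-degree corrections intrinsically on Bott-Samelson resolutions, where excess-intersection contributions lack an obvious combinatorial parametrization. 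A further subtlety is that $t_{\lambda,\mu}^\nu$ is defined without reference to any choice of reduced word for $m$, whereas the Bott-Samelson model requires one; matching rectification moves with commutations of reduced words is a likely prerequisite. Bridging this geometric-combinatorial gap — producing a $K$-theoretic analog of the Chaput-Perrin correspondence — is the heart of the problem.
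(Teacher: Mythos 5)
This statement is a conjecture, not a theorem: the paper offers no proof for it, so there is no ``paper's own proof'' to compare against. What the paper \emph{does} offer, in the paragraph immediately following Conjecture~\ref{conj:geometry}, is a proposed blueprint (following \cite{CP12,BS16}) with two ingredients: (1) complete Conjecture~\ref{conj:URTs} so that $K(\mathcal{P}_m)$ is defined and associative, and (2) verify Conjecture~\ref{conj:geometry} by \emph{ad hoc} geometric computation for special Schubert classes $[\mathcal{O}_{X_u}]$ lying in a generating set, then leverage the associativity of $K(\mathcal{P}_m)$ together with the known cohomological leading terms to bootstrap the full identity. This is the Thomas--Yong/Buch--Samuel strategy: one never needs a global dictionary between tableau slides and geometric operations; one needs only a Pieri-type verification plus a well-behaved combinatorial ring.

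Your proposal takes a genuinely different, and substantially harder, route: a direct geometric-to-combinatorial dictionary, matching Bott--Samelson $K$-theoretic pushforwards and their excess-intersection corrections against the extra ``short'' slides of increasing-tableau jeu de taquin. The central step you flag --- ``interpret each Demazure-type $K$-theoretic pushforward along a factor as the effect of a jeu-de-taquin slide $\ldots$ with the additional short slides $\ldots$ accounting for higher-degree $K$-theoretic terms arising from excess intersection'' --- is not established anywhere (not in \cite{CP12}, which works in cohomology, nor in \cite{BS16}, which proves the minuscule case via an associativity-and-generators argument, not a Bott--Samelson dictionary), and it is essentially the entire content of the conjecture restated in different language. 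You acknowledge this candidly, but as a consequence the proposal is a research plan rather than a proof: nothing is reduced to known results. Two additional cautions: (a) the proposal slightly mischaracterizes \cite{CP12} as reinterpreting standard jeu de taquin via Bott--Samelson projections, when their proof (like \cite{Thomas.Yong:minuscule,BS16}) rests on Pieri-type computations plus associativity; and (b) the ``slant-sum reduction to the minuscule case'' you propose faces the obstruction that the geometric $K$-product on $K(X)$ is not known to factor compatibly along the slant-sum decomposition of $\mathcal{P}_m$ --- the order ideals of a slant sum do not index Schubert classes of a product variety, so there is no obvious geometric object realizing that decomposition. If you pursue this conjecture, the paper's own suggested ingredients (1) and (2) are the tractable starting point; the Bott--Samelson dictionary is a worthy but much longer-range goal.
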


In Section~\ref{Section $d$-complete posets}, we give the precise versions of Conjecture~\ref{conj:URTs} and Theorem~\ref{thm:main}, as well as the details necessary for a precise understanding of Conjecture~\ref{conj:geometry}.
In light of Conjecture~\ref{conj:geometry}, Theorem~\ref{thm:main} should be understood as giving a conjectural positive combinatorial rule for certain $K$-theoretic Schubert structure constants of Kac-Moody flag varieties.
Several cases of Conjecture~\ref{conj:geometry} are known to be true or have been previously conjectured. If the flag variety $X$ is minuscule, then Conjecture~\ref{conj:geometry} reduces to the main theorem of \cite{BS16}. If, on the other hand, $X$ is general but $|\nu| - |\lambda| - |\mu| = 0$, then Conjecture~\ref{conj:geometry} reduces to \cite[Conjecture~1.1]{CP12}, many cases of which are proved in \cite[Theorem~1.3]{CP12}. Assuming one followed the general structure utilized by \cite{CP12,BS16}, the main ingredients one would need in a proof of Conjecture~\ref{conj:geometry} are 
\begin{itemize}
\item[(1.)] a proof of the remaining cases of Conjecture~\ref{conj:URTs} and
\item[(2.)] {\it ad hoc} geometric verifications of Conjecture~\ref{conj:geometry} for special $u$ lying in a generating set of classes.
\end{itemize}
For a large class of such Schubert problems, Theorem~\ref{thm:main} provides the necessary first ingredient, so it only remains to establish the second in those cases.

Another potential application of Theorem~\ref{thm:main} (or more generally Conjecture~\ref{conj:URTs}) is to establishing plane partition identities. In \cite{HPPW}, the authors use the existence of unique rectification targets in minuscule posets to give bijective proofs of the equinumerosity of various classes of plane partitions, in particular resolving a 1983 question of R.~Proctor \cite{Proctor:trap}. The main technology of \cite{HPPW} applies equally to any $d$-complete poset satisfying Conjecture~\ref{conj:URTs}; hence, we expect Theorem~\ref{thm:main} to yield analogous identities. Further discussion may appear elsewhere.

This paper is organized as follows. In Section~\ref{Section posets, skew shapes, and rectifications}, we fix notation for posets and describe the Thomas-Yong theory of jeu de taquin for increasing tableaux. We then recall the definition of unique rectification targets (URTs).
Section~\ref{Section adding to minimum and maximal elements} studies the behavior of URTs when two posets are combined via Proctor's \emph{slant sum} operation. Section ~\ref{Section slant sum} builds on Section~\ref{Section adding to minimum and maximal elements} by introducing the notion of a \emph{$p$-chain URT}, a stronger version of a URT that we will later need. Section ~\ref{Section doubled tailed diamonds} establishes the necessary technical fact that all increasing tableaux of straight shape in a double-tailed diamond poset are $p$-chain URTs. In Section~\ref{Section $d$-complete posets}, we first recall background on $d$-complete posets. We also recall needed notions to make Conjectures~\ref{conj:URTs} and~\ref{conj:geometry} precise. We then apply the results from Section~\ref{Section slant sum} and Section ~\ref{Section doubled tailed diamonds} to the study of $d$-complete posets and prove Theorem~\ref{thm:main}, our main result.

\section{posets, skew shapes, and rectifications}
\label{Section posets, skew shapes, and rectifications}

All posets in this paper will be finite, nonempty, and connected. These assumptions are made for convenience and clarity only; many of our results do not fundamentally rely on these properties, although the statements and proofs become messier without them. Moreover, the original definition of $d$-complete posets (which we follow in this paper) requires finiteness. Although there is now a more general notion of infinite $d$-complete posets (see the ``Added Notes'' at the very end of \cite{Proctor.Scoppetta} for discussion), we will not explicitly consider such objects.  In this section, $\mathcal{P}$ will denote an otherwise arbitrary poset.

We begin by fixing necessary terminology regarding posets. For $x,y \in \mathcal{P}$, we say that $z$ \textbf{covers} $x$ (written $x \lessdot z$), if $x<z$ and there does not exist a $y \in \mathcal{P}$ with $x < y < z$.
Let $x,y \in \mathcal{P}$. If $x < y$, we say that $x$ is an \textbf{ancestor} of $y$ and that $y$ is a \textbf{descendant} of $x$. If $x \lessdot y$, we say that $x$ is a \textbf{parent} of $y$ and that $y$ is a \textbf{child} of $x$. 
Adding ``\textbf{weak}'' to any of these terms also allows for equality, e.g.\ $x$ is a \textbf{weak descendant} of $y$ if $x \geq y$.
We denote the minimum element (if it exists) of the poset $\mathcal{P}$ by $\hat{0}_\mathcal{P}$. We say a poset $\mathcal{P}$ has a $\hat{0}_\mathcal{P}$ to mean that it has a minimum, which is $\hat{0}_\mathcal{P}$.

We often visualize posets using Hasse diagrams, where each element is represented by a circle, and $a \lessdot b$ if there is a line that goes up from $a$ to $b$.

\begin{example}\label{ex:Q}
Let $\mathcal{Q}$ be the poset on the elements 
$$\{(1,1),(1,2),(1,3),(2,1),(2,2),(3,1)\}$$
of $\mathbb{Z}^2$ under the natural order $(a,b) \leq (c,d)$ if both $a \leq c$ and $b \leq d$.
As a Hasse diagram, we have
\[
\mathcal{Q} = 
\begin{tikzpicture} [every node/.append style={circle, draw=black, inner sep=0pt, minimum size=16pt}, every draw/.append style={black, thick}]
\node (11) at (0,0) {};
\node [above right of=11] (21)  {};
\node [above right of=21] (31)  {};
\node [above left of=11] (12)  {};
\node [above left of=12] (13)  {};
\node [above left of=21] (22)  {};
 
\draw (11) -- (12);
\draw (11) -- (21);
\draw (13) -- (12);
\draw (22) -- (12);
\draw (22) -- (21);
\draw (31) -- (21);
\end{tikzpicture}
\]

We will use this $\mathcal{Q}$ as a running example throughout this section.
\end{example}

A \textbf{shape} $\nu$ of $\mathcal{P}$ is any subset of $\mathcal{P}$. The shape $\nu$ has a natural poset structure given by restricting that of $\mathcal{P}$.
A shape $\nu$ of $\mathcal{P}$ is called an \textbf{order ideal} of $\mathcal{P}$ if it is closed downwards, i.e.\ if $y \in \nu$ and $x < y$ together imply $x \in \nu$. Similarly, an \textbf{order filter} of $\mathcal{P}$ is a subset that is closed upwards. For historical reasons, we will also refer to the order ideals of $\mathcal{P}$ as \textbf{straight shapes}.

\begin{example}\label{ex:straight}
The following are all the straight shapes of the poset $\mathcal{Q}$ from Example~\ref{ex:Q}. For greater visual context, we represent elements not in the straight shape with solid black circles.
\[
\begin{array}{lllll}
\begin{tikzpicture} [every node/.append style={circle, draw=black, inner sep=0pt, minimum size=16pt}, every draw/.append style={black, thick}]
\node (11) at (0,0) {};
\node [fill=black, above right of=11] (21)  {};
\node [fill=black, above right of=21] (31)  {};
\node [fill=black, above left of=11] (12)  {};
\node [fill=black, above left of=12] (13)  {};
\node [fill=black, above left of=21] (22)  {};
 
\draw (11) -- (12);
\draw (11) -- (21);
\draw (13) -- (12);
\draw (22) -- (12);
\draw (22) -- (21);
\draw (31) -- (21);
\end{tikzpicture}&
\begin{tikzpicture} [every node/.append style={circle, draw=black, inner sep=0pt, minimum size=16pt}, every draw/.append style={black, thick}]
\node (11) at (0,0) {};
\node [ above right of=11] (21)  {};
\node [fill=black, above right of=21] (31)  {};
\node [fill=black, above left of=11] (12)  {};
\node [fill=black, above left of=12] (13)  {};
\node [fill=black, above left of=21] (22)  {};
 
\draw (11) -- (12);
\draw (11) -- (21);
\draw (13) -- (12);
\draw (22) -- (12);
\draw (22) -- (21);
\draw (31) -- (21);
\end{tikzpicture}&
\begin{tikzpicture} [every node/.append style={circle, draw=black, inner sep=0pt, minimum size=16pt}, every draw/.append style={black, thick}]
\node (11) at (0,0) {};
\node [fill=black, above right of=11] (21)  {};
\node [fill=black, above right of=21] (31)  {};
\node [ above left of=11] (12)  {};
\node [fill=black, above left of=12] (13)  {};
\node [fill=black, above left of=21] (22)  {};
 
\draw (11) -- (12);
\draw (11) -- (21);
\draw (13) -- (12);
\draw (22) -- (12);
\draw (22) -- (21);
\draw (31) -- (21);
\end{tikzpicture}&
\begin{tikzpicture} [every node/.append style={circle, draw=black, inner sep=0pt, minimum size=16pt}, every draw/.append style={black, thick}]
\node (11) at (0,0) {};
\node [ above right of=11] (21)  {};
\node [fill=black, above right of=21] (31)  {};
\node [ above left of=11] (12)  {};
\node [fill=black, above left of=12] (13)  {};
\node [fill=black, above left of=21] (22)  {};
 
\draw (11) -- (12);
\draw (11) -- (21);
\draw (13) -- (12);
\draw (22) -- (12);
\draw (22) -- (21);
\draw (31) -- (21);
\end{tikzpicture}&
\begin{tikzpicture} [every node/.append style={circle, draw=black, inner sep=0pt, minimum size=16pt}, every draw/.append style={black, thick}]
\node (11) at (0,0) {};
\node [fill=black, above right of=11] (21)  {};
\node [fill=black, above right of=21] (31)  {};
\node [ above left of=11] (12)  {};
\node [ above left of=12] (13)  {};
\node [fill=black, above left of=21] (22)  {};
 
\draw (11) -- (12);
\draw (11) -- (21);
\draw (13) -- (12);
\draw (22) -- (12);
\draw (22) -- (21);
\draw (31) -- (21);
\end{tikzpicture}
\\
\begin{tikzpicture} [every node/.append style={circle, draw=black, inner sep=0pt, minimum size=16pt}, every draw/.append style={black, thick}]
\node (11) at (0,0) {};
\node [ above right of=11] (21)  {};
\node [above right of=21] (31)  {};
\node [fill=black, above left of=11] (12)  {};
\node [fill=black, above left of=12] (13)  {};
\node [fill=black, above left of=21] (22)  {};
 
\draw (11) -- (12);
\draw (11) -- (21);
\draw (13) -- (12);
\draw (22) -- (12);
\draw (22) -- (21);
\draw (31) -- (21);
\end{tikzpicture}&
\begin{tikzpicture} [every node/.append style={circle, draw=black, inner sep=0pt, minimum size=16pt}, every draw/.append style={black, thick}]
\node (11) at (0,0) {};
\node [ above right of=11] (21)  {};
\node [fill=black, above right of=21] (31)  {};
\node [ above left of=11] (12)  {};
\node [above left of=12] (13)  {};
\node [fill=black, above left of=21] (22)  {};
 
\draw (11) -- (12);
\draw (11) -- (21);
\draw (13) -- (12);
\draw (22) -- (12);
\draw (22) -- (21);
\draw (31) -- (21);
\end{tikzpicture}&
\begin{tikzpicture} [every node/.append style={circle, draw=black, inner sep=0pt, minimum size=16pt}, every draw/.append style={black, thick}]
\node (11) at (0,0) {};
\node [ above right of=11] (21)  {};
\node [above right of=21] (31)  {};
\node [ above left of=11] (12)  {};
\node [fill=black, above left of=12] (13)  {};
\node [fill=black, above left of=21] (22)  {};
 
\draw (11) -- (12);
\draw (11) -- (21);
\draw (13) -- (12);
\draw (22) -- (12);
\draw (22) -- (21);
\draw (31) -- (21);
\end{tikzpicture}&
\begin{tikzpicture} [every node/.append style={circle, draw=black, inner sep=0pt, minimum size=16pt}, every draw/.append style={black, thick}]
\node (11) at (0,0) {};
\node [ above right of=11] (21)  {};
\node [fill=black, above right of=21] (31)  {};
\node [ above left of=11] (12)  {};
\node [fill=black, above left of=12] (13)  {};
\node [above left of=21] (22)  {};
 
\draw (11) -- (12);
\draw (11) -- (21);
\draw (13) -- (12);
\draw (22) -- (12);
\draw (22) -- (21);
\draw (31) -- (21);
\end{tikzpicture}&
\begin{tikzpicture} [every node/.append style={circle, draw=black, inner sep=0pt, minimum size=16pt}, every draw/.append style={black, thick}]
\node (11) at (0,0) {};
\node [ above right of=11] (21)  {};
\node [fill=black, above right of=21] (31)  {};
\node [ above left of=11] (12)  {};
\node [above left of=12] (13)  {};
\node [above left of=21] (22)  {};
 
\draw (11) -- (12);
\draw (11) -- (21);
\draw (13) -- (12);
\draw (22) -- (12);
\draw (22) -- (21);
\draw (31) -- (21);
\end{tikzpicture}
\\
\begin{tikzpicture} [every node/.append style={circle, draw=black, inner sep=0pt, minimum size=16pt}, every draw/.append style={black, thick}]
\node (11) at (0,0) {};
\node [ above right of=11] (21)  {};
\node [above right of=21] (31)  {};
\node [ above left of=11] (12)  {};
\node [fill=black, above left of=12] (13)  {};
\node [above left of=21] (22)  {};
 
\draw (11) -- (12);
\draw (11) -- (21);
\draw (13) -- (12);
\draw (22) -- (12);
\draw (22) -- (21);
\draw (31) -- (21);
\end{tikzpicture}&
\begin{tikzpicture} [every node/.append style={circle, draw=black, inner sep=0pt, minimum size=16pt}, every draw/.append style={black, thick}]
\node (11) at (0,0) {};
\node [ above right of=11] (21)  {};
\node [above right of=21] (31)  {};
\node [ above left of=11] (12)  {};
\node [ above left of=12] (13)  {};
\node [fill=black,above left of=21] (22)  {};
 
\draw (11) -- (12);
\draw (11) -- (21);
\draw (13) -- (12);
\draw (22) -- (12);
\draw (22) -- (21);
\draw (31) -- (21);
\end{tikzpicture}&
\begin{tikzpicture} [every node/.append style={circle, draw=black, inner sep=0pt, minimum size=16pt}, every draw/.append style={black, thick}]
\node (11) at (0,0) {};
\node [ above right of=11] (21)  {};
\node [above right of=21] (31)  {};
\node [ above left of=11] (12)  {};
\node [ above left of=12] (13)  {};
\node [above left of=21] (22)  {};
 
\draw (11) -- (12);
\draw (11) -- (21);
\draw (13) -- (12);
\draw (22) -- (12);
\draw (22) -- (21);
\draw (31) -- (21);
\end{tikzpicture}&
\end{array}
\]
\end{example}

If $\lambda \subseteq \nu$ are straight shapes of $\mathcal{P}$, then the shape $\nu \setminus \lambda$ is called a \textbf{skew shape} of $\mathcal{P}$ and is denoted $\nu/\lambda$. Note that every straight shape $\lambda$ can also be realized as the skew shape $\lambda / \emptyset$.

An element $x \in \lambda$ is called an \textbf{inner corner} of the skew shape $\nu / \lambda$ if $x$ is maximal in $\lambda$. We write $\ic(\nu / \lambda)$ to denote the set of inner corners of $\nu / \lambda$.

Clearly, we have the following.
\begin{lemma}
\label{Corollary inner corner ancestors}
Let $\nu/\lambda$ be a skew shape of the poset $\mathcal{P}$. If $c \in \ic(\nu / \lambda)$ and $b < c$, then $b \notin \ic(\nu / \lambda)$. \qed 
\end{lemma}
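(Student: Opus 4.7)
The statement is essentially immediate from the definition of inner corner, so the plan is just to unwind that definition cleanly. Recall that $c \in \ic(\nu / \lambda)$ means (i) $c \in \lambda$ and (ii) $c$ is maximal in $\lambda$. I would split on whether $b \in \lambda$ or not.

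First I would handle the case $b \notin \lambda$: then by definition $b$ cannot be an inner corner of $\nu/\lambda$, since being an inner corner requires membership in $\lambda$.

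Next I would handle the case $b \in \lambda$: here I would observe that $c \in \lambda$ together with $b < c$ witnesses that $b$ is not a maximal element of $\lambda$, so again $b \notin \ic(\nu/\lambda)$. Note that this step does not even need $\lambda$ to be an order ideal; it follows purely from the maximality clause in the definition. In particular, the argument does not rely on the fact that $\lambda$ is downward closed (though that would also force $b \in \lambda$ in the previous case, making the split unnecessary if one preferred to combine the two cases).

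Since the argument is a one-line unpacking of the definition in each case, there is no substantive obstacle; this is why the authors state it without proof as an immediate consequence of the definitions. The lemma's value is presumably not in its proof but in its later use, where one will want to quickly rule out that an ancestor of an inner corner could itself be an inner corner during inductive arguments about jeu de taquin slides.
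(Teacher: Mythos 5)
Your proof is correct and matches the paper's (unstated) reasoning; the authors simply assert the lemma as immediate, and your unwinding of the definition of inner corner via the two cases $b \notin \lambda$ and $b \in \lambda$ is exactly the intended one-line argument.
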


   For a skew shape $\nu/\lambda$ of $\mathcal{P}$, a function $T: \nu/\lambda \to \mathbb{Z}_{>0}$ is called a \textbf{skew increasing $\mathcal{P}$-tableau} of shape $\nu/\lambda$ if $T$ is a strictly order preserving map, i.e.\ if $x < y$ implies $T(x) < T(y)$.
If, in addition, $T$ is a bijection onto an initial segment of $\mathbb{Z}_{>0}$, we say $T$ is a \textbf{skew standard $\mathcal{P}$-tableau}. In both cases, if $\nu/\lambda$ is a straight shape, we drop the word ``skew.''

We depict a skew increasing $\mathcal{P}$-tableau $T$ using Hasse diagrams with labels. For an element $x \in \mathcal{P}$, we put the value of $T(x)$ in the circle of the Hasse diagram corresponding to $x$. Also, to make clear what the ambient poset is we represent skewed out elements (the elements in $\lambda$) with unlabeled hollow circles.
\begin{example}
If $\mathcal{P}$ is the numbers $1,2,3,4$ with the usual order and $\nu/\lambda = \mathcal{P}$ and $T(x) = x+5$, then $T$ can be visualized as
\[
\begin{tikzpicture} [every node/.append style={circle, draw=black, inner sep=0pt, minimum size=16pt}, every draw/.append style={black, thick}]
\node (1) at (0,0) {6};
\node [ above of=1] (2)  {7};
\node [ above of=2] (3)  {8};
\node [ above of=3] (4)  {9};
 
\draw (1) -- (2);
\draw (2) -- (3);
\draw (3) -- (4);
\end{tikzpicture}
\]
\end{example}
\begin{example}
The following is an example of a skew increasing $\mathcal{Q}$-tableau  $T$ of shape $\nu/\lambda$
\[\nu = 
\begin{tikzpicture} [every node/.append style={circle, draw=black, inner sep=0pt, minimum size=16pt}, every draw/.append style={black, thick}]
\node (11) at (0,0) {};
\node [above right of=11] (21)  {};
\node [above right of=21] (31)  {};
\node [above left of=11] (12)  {};
\node [above left of=12] (13)  {};
\node [above left of=21] (22)  {};
 
\draw (11) -- (12);
\draw (11) -- (21);
\draw (13) -- (12);
\draw (22) -- (12);
\draw (22) -- (21);
\draw (31) -- (21);
\end{tikzpicture}, \quad 
\lambda = \begin{tikzpicture} [every node/.append style={circle, draw=black, inner sep=0pt, minimum size=16pt}, every draw/.append style={black, thick}]
\node (11) at (0,0) {};
\end{tikzpicture}
, 
\quad T = \begin{tikzpicture} [every node/.append style={circle, draw=black, inner sep=0pt, minimum size=16pt}, every draw/.append style={black, thick}]
\node  (11) at (0,0) {};
\node [above right of=11] (21)  {1};
\node [above right of=21] (31)  {2};
\node [above left of=11] (12)  {3};
\node [above left of=12] (13)  {4};
\node [above left of=21] (22)  {4};
 
\draw (11) -- (12);
\draw (11) -- (21);
\draw (13) -- (12);
\draw (22) -- (12);
\draw (22) -- (21);
\draw (31) -- (21);
\end{tikzpicture}.\]
\end{example}

For a skew shape $\nu/\lambda$ of $\mathcal{P}$, we say a function $T: \nu/\lambda \to \mathbb{Z}_{>0} \cup \{\bullet\}$ is a  \textbf{skew dotted increasing $\mathcal{P}$-tableau} of shape $\nu/\lambda$  if there is a rational number $q$ such that $T$ becomes a strictly order preserving map ($\nu/\lambda \to \mathbb{Q}$) when we replace each $\bullet$ with that fixed $q$.
\begin{example}
The following is a skew dotted increasing $\mathcal{Q}$-tableau
\[
\begin{tikzpicture} [every node/.append style={circle, draw=black, inner sep=0pt, minimum size=16pt}, every draw/.append style={black, thick}]
\node  (11) at (0,0) {1};
\node [above right of=11] (21)  {$\bullet$};
\node [above right of=21] (31)  {2};
\node [above left of=11] (12)  {$\bullet$};
\node [above left of=12] (13)  {3};
\node [above left of=21] (22)  {2};
 
\draw (11) -- (12);
\draw (11) -- (21);
\draw (13) -- (12);
\draw (22) -- (12);
\draw (22) -- (21);
\draw (31) -- (21);
\end{tikzpicture}
\]
and the following is not (because one cannot replace the $\bullet$ with one fixed $q$ and make it an order preserving map)
\[
\begin{tikzpicture} [every node/.append style={circle, draw=black, inner sep=0pt, minimum size=16pt}, every draw/.append style={black, thick}]
\node  (11) at (0,0) {1};
\node [above right of=11] (21)  {$\bullet$};
\node [above right of=21] (31)  {3};
\node [above left of=11] (12)  {3};
\node [above left of=12] (13)  {$\bullet$};
\node [above left of=21] (22)  {4};
 
\draw (11) -- (12);
\draw (11) -- (21);
\draw (13) -- (12);
\draw (22) -- (12);
\draw (22) -- (21);
\draw (31) -- (21);
\end{tikzpicture}
\]
\end{example}

\begin{definition}
Let $T$ be a skew increasing $\mathcal{P}$-tableau of shape $\nu/\lambda$. If $\gamma$ is a nonempty set of inner (or outer) corners of $\nu/\lambda$, then $\mathsf{AddDots}_\gamma(T)$ is the skew increasing $\mathcal{P}$-tableau $S$ of shape $\nu/\lambda \cup \gamma$ defined by
\[S(x) = 
			\begin{cases} 
              T(x),  & \text{ if } x \in \nu/\lambda; \\
              \bullet, & \text{ if } x \in \gamma.
          \end{cases}
\]
\end{definition}
\begin{example}
For 
\[T = 
\begin{tikzpicture} [every node/.append style={circle, draw=black, inner sep=0pt, minimum size=16pt}, every draw/.append style={black, thick}]
\node  (11) at (0,0) {};
\node [fill=SkyBlue,above right of=11] (21)  {};
\node [above right of=21] (31)  {3};
\node [fill=SkyBlue,above left of=11] (12)  {};
\node [above left of=12] (13)  {1};
\node [above left of=21] (22)  {2};
 
\draw (11) -- (12);
\draw (11) -- (21);
\draw (13) -- (12);
\draw (22) -- (12);
\draw (22) -- (21);
\draw (31) -- (21);
\end{tikzpicture}
\]
let $\gamma$ be the set of blue shaded inner corners. Then, we have
\pushQED{\qed}
\[ \mathsf{AddDots}_\gamma(T) =
\begin{tikzpicture} [every node/.append style={circle, draw=black, inner sep=0pt, minimum size=16pt}, every draw/.append style={black, thick}]
\node  (11) at (0,0) {};
\node [above right of=11] (21)  {$\bullet$};
\node [above right of=21] (31)  {3};
\node [above left of=11] (12)  {$\bullet$};
\node [above left of=12] (13)  {1};
\node [above left of=21] (22)  {2};
 
\draw (11) -- (12);
\draw (11) -- (21);
\draw (13) -- (12);
\draw (22) -- (12);
\draw (22) -- (21);
\draw (31) -- (21);
\end{tikzpicture} \qedhere \popQED \] 
\let\qed\relax
\end{example}

\begin{definition}
Let $T$  be a skew dotted increasing $\mathcal{P}$-tableau. For $n \in \mathbb{Z}_{>0}$, $\mathsf{Swap}_{\bullet,n}(T)$ is the skew dotted increasing $\mathcal{P}$-tableau $S$ defined by
\[S(x) = 
     	\begin{cases} 
            n, & \text{ if } T(x) = \bullet \text{ and } T(y) = n \text{ for some $y \lessdot x$}; \\
            \bullet, & \text{ if } T(x) = n \text{ and } T(y) = \bullet \text{ for some $y \gtrdot x$}; \\
            T(x), & \text{ otherwise.} 
        \end{cases}
\]
\end{definition}
\begin{example}
We have
\ytableausetup{centertableaux}
\begin{align*}
&\mathsf{Swap}_{\bullet,1}\left( \;
\begin{tikzpicture} [every node/.append style={circle, draw=black, inner sep=0pt, minimum size=16pt}, every draw/.append style={black, thick}, baseline={([yshift=-.5ex]current bounding box.center)}]
\node  (11) at (0,0) {};
\node [above right of=11] (21)  {$\bullet$};
\node [above right of=21] (31)  {3};
\node [above left of=11] (12)  {$\bullet$};
\node [above left of=12] (13)  {1};
\node [above left of=21] (22)  {2};
\draw (11) -- (12);
\draw (11) -- (21);
\draw (13) -- (12);
\draw (22) -- (12);
\draw (22) -- (21);
\draw (31) -- (21);
\end{tikzpicture} \; \right) = 
\begin{tikzpicture} [every node/.append style={circle, draw=black, inner sep=0pt, minimum size=16pt}, every draw/.append style={black, thick}, baseline={([yshift=-.5ex]current bounding box.center)}]
\node  (11) at (0,0) {};
\node [above right of=11] (21)  {$\bullet$};
\node [above right of=21] (31)  {3};
\node [above left of=11] (12)  {1};
\node [above left of=12] (13)  {$\bullet$};
\node [above left of=21] (22)  {2};
\draw (11) -- (12);
\draw (11) -- (21);
\draw (13) -- (12);
\draw (22) -- (12);
\draw (22) -- (21);
\draw (31) -- (21);
\end{tikzpicture}
&\mathsf{Swap}_{\bullet,2}\left( \;
\begin{tikzpicture} [every node/.append style={circle, draw=black, inner sep=0pt, minimum size=16pt}, every draw/.append style={black, thick}, baseline={([yshift=-.5ex]current bounding box.center)}]
\node  (11) at (0,0) {};
\node [above right of=11] (21)  {1};
\node [above right of=21] (31)  {2};
\node [above left of=11] (12)  {$\bullet$};
\node [above left of=12] (13)  {2};
\node [above left of=21] (22)  {2};
\draw (11) -- (12);
\draw (11) -- (21);
\draw (13) -- (12);
\draw (22) -- (12);
\draw (22) -- (21);
\draw (31) -- (21);
\end{tikzpicture}
\; \right) =
\begin{tikzpicture} [every node/.append style={circle, draw=black, inner sep=0pt, minimum size=16pt}, every draw/.append style={black, thick}, baseline={([yshift=-.5ex]current bounding box.center)}]
\node  (11) at (0,0) {};
\node [above right of=11] (21)  {1};
\node [above right of=21] (31)  {2};
\node [above left of=11] (12)  {2};
\node [above left of=12] (13)  {$\bullet$};
\node [above left of=21] (22)  {$\bullet$};
\draw (11) -- (12);
\draw (11) -- (21);
\draw (13) -- (12);
\draw (22) -- (12);
\draw (22) -- (21);
\draw (31) -- (21);
\end{tikzpicture}\\
&\mathsf{Swap}_{\bullet,1}\left( \; 
\begin{tikzpicture} [every node/.append style={circle, draw=black, inner sep=0pt, minimum size=16pt}, every draw/.append style={black, thick}, baseline={([yshift=-.5ex]current bounding box.center)}]
\node  (11) at (0,0) {};
\node [above right of=11] (21)  {$\bullet$};
\node [above right of=21] (31)  {2};
\node [above left of=11] (12)  {$\bullet$};
\node [above left of=12] (13)  {1};
\node [above left of=21] (22)  {1};
\draw (11) -- (12);
\draw (11) -- (21);
\draw (13) -- (12);
\draw (22) -- (12);
\draw (22) -- (21);
\draw (31) -- (21);
\end{tikzpicture}
\right) = 
\begin{tikzpicture} [every node/.append style={circle, draw=black, inner sep=0pt, minimum size=16pt}, every draw/.append style={black, thick}, baseline={([yshift=-.5ex]current bounding box.center)}]
\node  (11) at (0,0) {};
\node [above right of=11] (21)  {1};
\node [above right of=21] (31)  {2};
\node [above left of=11] (12)  {1};
\node [above left of=12] (13)  {$\bullet$};
\node [above left of=21] (22)  {$\bullet$};
\draw (11) -- (12);
\draw (11) -- (21);
\draw (13) -- (12);
\draw (22) -- (12);
\draw (22) -- (21);
\draw (31) -- (21);
\end{tikzpicture}
&\mathsf{Swap}_{\bullet,1}\left( \; 
\begin{tikzpicture} [every node/.append style={circle, draw=black, inner sep=0pt, minimum size=16pt}, every draw/.append style={black, thick}, baseline={([yshift=-.5ex]current bounding box.center)}]
\node  (11) at (0,0) {};
\node [above right of=11] (21)  {$\bullet$};
\node [above right of=21] (31)  {1};
\node [above left of=11] (12)  {$\bullet$};
\node [above left of=12] (13)  {1};
\node [above left of=21] (22)  {1};
\draw (11) -- (12);
\draw (11) -- (21);
\draw (13) -- (12);
\draw (22) -- (12);
\draw (22) -- (21);
\draw (31) -- (21);
\end{tikzpicture}
\; \right) = 
\begin{tikzpicture} [every node/.append style={circle, draw=black, inner sep=0pt, minimum size=16pt}, every draw/.append style={black, thick}, baseline={([yshift=-.5ex]current bounding box.center)}]
\node  (11) at (0,0) {};
\node [above right of=11] (21)  {1};
\node [above right of=21] (31)  {$\bullet$};
\node [above left of=11] (12)  {1};
\node [above left of=12] (13)  {$\bullet$};
\node [above left of=21] (22)  {$\bullet$};
\draw (11) -- (12);
\draw (11) -- (21);
\draw (13) -- (12);
\draw (22) -- (12);
\draw (22) -- (21);
\draw (31) -- (21);
\end{tikzpicture}\\
&\mathsf{Swap}_{\bullet,1}\left( \;
\begin{tikzpicture} [every node/.append style={circle, draw=black, inner sep=0pt, minimum size=16pt}, every draw/.append style={black, thick}, baseline={([yshift=-.5ex]current bounding box.center)}]
\node  (11) at (0,0) {};
\node [above right of=11] (21)  {$\bullet$};
\node [above right of=21] (31)  {3};
\node [above left of=11] (12)  {1};
\node [above left of=12] (13)  {2};
\node [above left of=21] (22)  {3};
\draw (11) -- (12);
\draw (11) -- (21);
\draw (13) -- (12);
\draw (22) -- (12);
\draw (22) -- (21);
\draw (31) -- (21);
\end{tikzpicture}
\; \right) = 
\begin{tikzpicture} [every node/.append style={circle, draw=black, inner sep=0pt, minimum size=16pt}, every draw/.append style={black, thick}, baseline={([yshift=-.5ex]current bounding box.center)}]
\node  (11) at (0,0) {};
\node [above right of=11] (21)  {$\bullet$};
\node [above right of=21] (31)  {3};
\node [above left of=11] (12)  {1};
\node [above left of=12] (13)  {2};
\node [above left of=21] (22)  {3};
\draw (11) -- (12);
\draw (11) -- (21);
\draw (13) -- (12);
\draw (22) -- (12);
\draw (22) -- (21);
\draw (31) -- (21);
\end{tikzpicture}& \\
\end{align*}
\ytableausetup{nocentertableaux}
\end{example}

\begin{definition}
Let $T$  be a skew dotted increasing $\mathcal{P}$-tableau. Let $\mathcal{Q}$ be the subset of $\mathcal{P}$ which $T$ maps to an integer, i.e.\
\[\mathcal{Q} = \{x : T(x) \in \mathbb{Z}_{>0}  \}.\]
Then, we define $\mathsf{RemoveDots}(T) = T|_\mathcal{Q}$.
\end{definition}
\begin{example}
For example,
\[\mathsf{RemoveDots} \left( \; 
\begin{tikzpicture} [every node/.append style={circle, draw=black, inner sep=0pt, minimum size=16pt}, every draw/.append style={black, thick}, baseline={([yshift=-.5ex]current bounding box.center)}]
\node  (11) at (0,0) {1};
\node [above right of=11] (21)  {2};
\node [above right of=21] (31)  {$\bullet$};
\node [above left of=11] (12)  {2};
\node [above left of=12] (13)  {$\bullet$};
\node [above left of=21] (22)  {$\bullet$};
\draw (11) -- (12);
\draw (11) -- (21);
\draw (13) -- (12);
\draw (22) -- (12);
\draw (22) -- (21);
\draw (31) -- (21);
\end{tikzpicture}
\; \right) = \begin{tikzpicture} [every node/.append style={circle, draw=black, inner sep=0pt, minimum size=16pt}, every draw/.append style={black, thick}, baseline={([yshift=-.5ex]current bounding box.center)}]
\node  (11) at (0,0) {1};
\node [above right of=11] (21)  {2};
\node [above left of=11] (12)  {2};
\draw (11) -- (12);
\draw (11) -- (21);
\end{tikzpicture}.\]
\end{example}

\begin{definition}
Let $T$ be a skew increasing $\mathcal{P}$-tableau of shape $\nu/\lambda$ and let $\gamma \subseteq \ic(\nu/\lambda)$. Let $n = \max(\range(T))$. The \textbf{slide} of $\gamma$ in $T$ is the skew increasing $\mathcal{P}$-tableau
\[\mathsf{Slide}_\gamma(T) = \mathsf{RemoveDots} \circ \mathsf{Swap}_{\bullet,n} \circ \cdots \circ \mathsf{Swap}_{\bullet,1} \circ \mathsf{AddDots}_\gamma(T).\]
We will also use the notation $\mathsf{Slide}_{\gamma_1, \ldots , \gamma_n}$ to denote iterated slides, i.e.\
\[\mathsf{Slide}_{\gamma_1, \ldots, \gamma_n}(T) = \mathsf{Slide}_{\gamma_n} \circ \cdots \circ \mathsf{Slide}_{\gamma_1}(T).\]
\end{definition}

\begin{example}
As an example, let $T$ be an increasing $\mathcal{P}$ tableau
\[T = \begin{tikzpicture} [every node/.append style={circle, draw=black, inner sep=0pt, minimum size=16pt}, every draw/.append style={black, thick}]
\node  (11) at (0,0) {};
\node [above left of=11] (12)  {};
\node [above left of=12] (13)  {1};
\node [above left of=13] (14)  {2};
\node [above right of=11] (21)  {};
\node [above left of=21] (22)  {2};
\node [above left of=22] (23)  {3};
\node [above right of=21] (31)  {2};
\node [above left of=31] (32)  {3};
\node [above right of=31] (41)  {4};

\draw (11) -- (12);
\draw (12) -- (13);
\draw (13) -- (14);
\draw (21) -- (22);
\draw (22) -- (23);
\draw (31) -- (32);
\draw (11) -- (21);
\draw (21) -- (31);
\draw (31) -- (41);
\draw (12) -- (22);
\draw (22) -- (32);
\draw (13) -- (23);

\end{tikzpicture}, \;
\mathcal{P} = \begin{tikzpicture} [every node/.append style={circle, draw=black, inner sep=0pt, minimum size=16pt}, every draw/.append style={black, thick}]
\node  (11) at (0,0) {};
\node [above left of=11] (12)  {a};
\node [above left of=12] (13)  {};
\node [above left of=13] (14)  {};
\node [above right of=11] (21)  {b};
\node [above left of=21] (22)  {};
\node [above left of=22] (23)  {};
\node [above right of=21] (31)  {};
\node [above left of=31] (32)  {};
\node [above right of=31] (41)  {};

\draw (11) -- (12);
\draw (12) -- (13);
\draw (13) -- (14);
\draw (21) -- (22);
\draw (22) -- (23);
\draw (31) -- (32);
\draw (11) -- (21);
\draw (21) -- (31);
\draw (31) -- (41);
\draw (12) -- (22);
\draw (22) -- (32);
\draw (13) -- (23);

\end{tikzpicture}\]
where $a,b$ are the two inner corners for $T$.
We compute $\mathsf{Slide_\gamma}$ for various values of $\gamma$, beginning after $\mathsf{addDots}$, showing the intermediate swaps, and ending after $\mathsf{removeDots}$:
\begin{align*}
\gamma &= \{a\} &
\begin{tikzpicture} [every node/.append style={circle, draw=black, inner sep=0pt, minimum size=16pt}, every draw/.append style={black, thick}, baseline={([yshift=-.5ex]current bounding box.center)}]
\node  (11) at (0,0) {};
\node [above left of=11] (12)  {$\bullet$};
\node [above left of=12] (13)  {1};
\node [above left of=13] (14)  {2};
\node [above right of=11] (21)  {};
\node [above left of=21] (22)  {2};
\node [above left of=22] (23)  {3};
\node [above right of=21] (31)  {2};
\node [above left of=31] (32)  {3};
\node [above right of=31] (41)  {4};
\draw (11) -- (12);
\draw (12) -- (13);
\draw (13) -- (14);
\draw (21) -- (22);
\draw (22) -- (23);
\draw (31) -- (32);
\draw (11) -- (21);
\draw (21) -- (31);
\draw (31) -- (41);
\draw (12) -- (22);
\draw (22) -- (32);
\draw (13) -- (23);
\end{tikzpicture} && 
\begin{tikzpicture} [every node/.append style={circle, draw=black, inner sep=0pt, minimum size=16pt}, every draw/.append style={black, thick}, baseline={([yshift=-.5ex]current bounding box.center)}]
\node  (11) at (0,0) {};
\node [above left of=11] (12)  {1};
\node [above left of=12] (13)  {$\bullet$};
\node [above left of=13] (14)  {2};
\node [above right of=11] (21)  {};
\node [above left of=21] (22)  {2};
\node [above left of=22] (23)  {3};
\node [above right of=21] (31)  {2};
\node [above left of=31] (32)  {3};
\node [above right of=31] (41)  {4};
\draw (11) -- (12);
\draw (12) -- (13);
\draw (13) -- (14);
\draw (21) -- (22);
\draw (22) -- (23);
\draw (31) -- (32);
\draw (11) -- (21);
\draw (21) -- (31);
\draw (31) -- (41);
\draw (12) -- (22);
\draw (22) -- (32);
\draw (13) -- (23);
\end{tikzpicture}&& 
\begin{tikzpicture} [every node/.append style={circle, draw=black, inner sep=0pt, minimum size=16pt}, every draw/.append style={black, thick}, baseline={([yshift=-.5ex]current bounding box.center)}]
\node  (11) at (0,0) {};
\node [above left of=11] (12)  {1};
\node [above left of=12] (13)  {2};
\node [above left of=13] (14)  {$\bullet$};
\node [above right of=11] (21)  {};
\node [above left of=21] (22)  {2};
\node [above left of=22] (23)  {3};
\node [above right of=21] (31)  {2};
\node [above left of=31] (32)  {3};
\node [above right of=31] (41)  {4};
\draw (11) -- (12);
\draw (12) -- (13);
\draw (13) -- (14);
\draw (21) -- (22);
\draw (22) -- (23);
\draw (31) -- (32);
\draw (11) -- (21);
\draw (21) -- (31);
\draw (31) -- (41);
\draw (12) -- (22);
\draw (22) -- (32);
\draw (13) -- (23);
\end{tikzpicture}&&
\begin{tikzpicture} [every node/.append style={circle, draw=black, inner sep=0pt, minimum size=16pt}, every draw/.append style={black, thick}, baseline={([yshift=-.5ex]current bounding box.center)}]
\node  (11) at (0,0) {};
\node [above left of=11] (12)  {1};
\node [above left of=12] (13)  {2};
\node [above right of=11] (21)  {};
\node [above left of=21] (22)  {2};
\node [above left of=22] (23)  {3};
\node [above right of=21] (31)  {2};
\node [above left of=31] (32)  {3};
\node [above right of=31] (41)  {4};
\draw (11) -- (12);
\draw (12) -- (13);
\draw (21) -- (22);
\draw (22) -- (23);
\draw (31) -- (32);
\draw (11) -- (21);
\draw (21) -- (31);
\draw (31) -- (41);
\draw (12) -- (22);
\draw (22) -- (32);
\draw (13) -- (23);
\end{tikzpicture} \\
\gamma &= \{b\} &
\begin{tikzpicture} [every node/.append style={circle, draw=black, inner sep=0pt, minimum size=16pt}, every draw/.append style={black, thick}, baseline={([yshift=-.5ex]current bounding box.center)}]
\node  (11) at (0,0) {};
\node [above left of=11] (12)  {};
\node [above left of=12] (13)  {1};
\node [above left of=13] (14)  {2};
\node [above right of=11] (21)  {$\bullet$};
\node [above left of=21] (22)  {2};
\node [above left of=22] (23)  {3};
\node [above right of=21] (31)  {2};
\node [above left of=31] (32)  {3};
\node [above right of=31] (41)  {4};
\draw (11) -- (12);
\draw (12) -- (13);
\draw (13) -- (14);
\draw (21) -- (22);
\draw (22) -- (23);
\draw (31) -- (32);
\draw (11) -- (21);
\draw (21) -- (31);
\draw (31) -- (41);
\draw (12) -- (22);
\draw (22) -- (32);
\draw (13) -- (23);
\end{tikzpicture}&&
\begin{tikzpicture} [every node/.append style={circle, draw=black, inner sep=0pt, minimum size=16pt}, every draw/.append style={black, thick}, baseline={([yshift=-.5ex]current bounding box.center)}]
\node  (11) at (0,0) {};
\node [above left of=11] (12)  {};
\node [above left of=12] (13)  {1};
\node [above left of=13] (14)  {2};
\node [above right of=11] (21)  {2};
\node [above left of=21] (22)  {$\bullet$};
\node [above left of=22] (23)  {3};
\node [above right of=21] (31)  {$\bullet$};
\node [above left of=31] (32)  {3};
\node [above right of=31] (41)  {4};
\draw (11) -- (12);
\draw (12) -- (13);
\draw (13) -- (14);
\draw (21) -- (22);
\draw (22) -- (23);
\draw (31) -- (32);
\draw (11) -- (21);
\draw (21) -- (31);
\draw (31) -- (41);
\draw (12) -- (22);
\draw (22) -- (32);
\draw (13) -- (23);
\end{tikzpicture}&&
\begin{tikzpicture} [every node/.append style={circle, draw=black, inner sep=0pt, minimum size=16pt}, every draw/.append style={black, thick}, baseline={([yshift=-.5ex]current bounding box.center)}]
\node  (11) at (0,0) {};
\node [above left of=11] (12)  {};
\node [above left of=12] (13)  {1};
\node [above left of=13] (14)  {2};
\node [above right of=11] (21)  {2};
\node [above left of=21] (22)  {3};
\node [above left of=22] (23)  {$\bullet$};
\node [above right of=21] (31)  {3};
\node [above left of=31] (32)  {$\bullet$};
\node [above right of=31] (41)  {4};
\draw (11) -- (12);
\draw (12) -- (13);
\draw (13) -- (14);
\draw (21) -- (22);
\draw (22) -- (23);
\draw (31) -- (32);
\draw (11) -- (21);
\draw (21) -- (31);
\draw (31) -- (41);
\draw (12) -- (22);
\draw (22) -- (32);
\draw (13) -- (23);
\end{tikzpicture}&&
\begin{tikzpicture} [every node/.append style={circle, draw=black, inner sep=0pt, minimum size=16pt}, every draw/.append style={black, thick}, baseline={([yshift=-.5ex]current bounding box.center)}]
\node  (11) at (0,0) {};
\node [above left of=11] (12)  {};
\node [above left of=12] (13)  {1};
\node [above left of=13] (14)  {2};
\node [above right of=11] (21)  {2};
\node [above left of=21] (22)  {3};
\node [above right of=21] (31)  {3};
\node [above right of=31] (41)  {4};
\draw (11) -- (12);
\draw (12) -- (13);
\draw (13) -- (14);
\draw (21) -- (22);
\draw (11) -- (21);
\draw (21) -- (31);
\draw (31) -- (41);
\draw (12) -- (22);
\end{tikzpicture}\\
\gamma &= \{a,b\} &
\begin{tikzpicture} [every node/.append style={circle, draw=black, inner sep=0pt, minimum size=16pt}, every draw/.append style={black, thick}, baseline={([yshift=-.5ex]current bounding box.center)}]
\node  (11) at (0,0) {};
\node [above left of=11] (12)  {$\bullet$};
\node [above left of=12] (13)  {1};
\node [above left of=13] (14)  {2};
\node [above right of=11] (21)  {$\bullet$};
\node [above left of=21] (22)  {2};
\node [above left of=22] (23)  {3};
\node [above right of=21] (31)  {2};
\node [above left of=31] (32)  {3};
\node [above right of=31] (41)  {4};
\draw (11) -- (12);
\draw (12) -- (13);
\draw (13) -- (14);
\draw (21) -- (22);
\draw (22) -- (23);
\draw (31) -- (32);
\draw (11) -- (21);
\draw (21) -- (31);
\draw (31) -- (41);
\draw (12) -- (22);
\draw (22) -- (32);
\draw (13) -- (23);
\end{tikzpicture}&&
\begin{tikzpicture} [every node/.append style={circle, draw=black, inner sep=0pt, minimum size=16pt}, every draw/.append style={black, thick}, baseline={([yshift=-.5ex]current bounding box.center)}]
\node  (11) at (0,0) {};
\node [above left of=11] (12)  {1};
\node [above left of=12] (13)  {$\bullet$};
\node [above left of=13] (14)  {2};
\node [above right of=11] (21)  {$\bullet$};
\node [above left of=21] (22)  {2};
\node [above left of=22] (23)  {3};
\node [above right of=21] (31)  {2};
\node [above left of=31] (32)  {3};
\node [above right of=31] (41)  {4};
\draw (11) -- (12);
\draw (12) -- (13);
\draw (13) -- (14);
\draw (21) -- (22);
\draw (22) -- (23);
\draw (31) -- (32);
\draw (11) -- (21);
\draw (21) -- (31);
\draw (31) -- (41);
\draw (12) -- (22);
\draw (22) -- (32);
\draw (13) -- (23);
\end{tikzpicture}&&
\begin{tikzpicture} [every node/.append style={circle, draw=black, inner sep=0pt, minimum size=16pt}, every draw/.append style={black, thick}, baseline={([yshift=-.5ex]current bounding box.center)}]
\node  (11) at (0,0) {};
\node [above left of=11] (12)  {1};
\node [above left of=12] (13)  {2};
\node [above left of=13] (14)  {$\bullet$};
\node [above right of=11] (21)  {2};
\node [above left of=21] (22)  {$\bullet$};
\node [above left of=22] (23)  {3};
\node [above right of=21] (31)  {$\bullet$};
\node [above left of=31] (32)  {3};
\node [above right of=31] (41)  {4};
\draw (11) -- (12);
\draw (12) -- (13);
\draw (13) -- (14);
\draw (21) -- (22);
\draw (22) -- (23);
\draw (31) -- (32);
\draw (11) -- (21);
\draw (21) -- (31);
\draw (31) -- (41);
\draw (12) -- (22);
\draw (22) -- (32);
\draw (13) -- (23);
\end{tikzpicture}&&
\begin{tikzpicture} [every node/.append style={circle, draw=black, inner sep=0pt, minimum size=16pt}, every draw/.append style={black, thick}, baseline={([yshift=-.5ex]current bounding box.center)}]
\node  (11) at (0,0) {};
\node [above left of=11] (12)  {1};
\node [above left of=12] (13)  {2};
\node [above left of=13] (14)  {$\bullet$};
\node [above right of=11] (21)  {2};
\node [above left of=21] (22)  {3};
\node [above left of=22] (23)  {$\bullet$};
\node [above right of=21] (31)  {3};
\node [above left of=31] (32)  {$\bullet$};
\node [above right of=31] (41)  {4};
\draw (11) -- (12);
\draw (12) -- (13);
\draw (13) -- (14);
\draw (21) -- (22);
\draw (22) -- (23);
\draw (31) -- (32);
\draw (11) -- (21);
\draw (21) -- (31);
\draw (31) -- (41);
\draw (12) -- (22);
\draw (22) -- (32);
\draw (13) -- (23);
\end{tikzpicture}\\
(c&ontinued) &
\begin{tikzpicture} [every node/.append style={circle, draw=black, inner sep=0pt, minimum size=16pt}, every draw/.append style={black, thick}, baseline={([yshift=-.5ex]current bounding box.center)}]
\node  (11) at (0,0) {};
\node [above left of=11] (12)  {1};
\node [above left of=12] (13)  {2};
\node [above right of=11] (21)  {2};
\node [above left of=21] (22)  {3};
\node [above right of=21] (31)  {3};
\node [above right of=31] (41)  {4};
\draw (11) -- (12);
\draw (12) -- (13);
\draw (21) -- (22);
\draw (11) -- (21);
\draw (21) -- (31);
\draw (31) -- (41);
\draw (12) -- (22);
\end{tikzpicture}
\end{align*}
\end{example}

For a tableau $T$ of shape $\nu / \lambda$, we use the notation $\ic(T)$ to mean $\ic(\nu / \lambda)$.

\begin{definition}
Let $T$ be a skew increasing $\mathcal{P}$-tableau. 
We define its rectification step sets, $S_i$, recursively. 
First, $S_0 = \{T\}$. Next, 
\[S_{n+1} = \{\mathsf{Slide}_\gamma(S) : S \in S_{n} \text{ and } \emptyset \neq \gamma \subseteq \ic(S) \}.\]
The \textbf{rectifications} of $T$ are the elements of the \textbf{rectification set}
\[\mathsf{rects}(T) =\{U : U  \in S_n \text{ for some } n \in \mathbb{Z}_{\geq 0} \text{ and } U \text{ is of straight shape}\}.\]
To denote that $U$ is a rectification of $T$ given by sliding the sequence of sets of inner corners $(\gamma_1, \ldots, \gamma_n)$, we write $T \xrightarrow{\gamma_1, \ldots, \gamma_n} U$.
\end{definition}
\begin{example}
We do an example which has two rectifications.
Let 
\[\mathcal{P} = 
\begin{tikzpicture} [every node/.append style={circle, draw=black, inner sep=0pt, minimum size=16pt}, every draw/.append style={black, thick}]
\node  (11) at (0,0) {};
\node [above left of=11] (12)  {};
\node [above left of=12] (13)  {};
\node [above left of=13] (14)  {};
\node [above right of=11] (21) {};
\node [above left of=21] (22)  {};
\node [above left of=22] (23)  {};
\node [above right of=21] (31) {};
\node [above left of=31] (32)  {};
\node [above left of=32] (33)  {};
\draw (11) -- (12);
\draw (12) -- (13);
\draw (13) -- (14);
\draw (21) -- (22);
\draw (22) -- (23);
\draw (31) -- (32);
\draw (32) -- (33);
\draw (11) -- (21);
\draw (21) -- (31);
\draw (12) -- (22);
\draw (22) -- (32);
\draw (23) -- (33);
\draw (13) -- (23);
\end{tikzpicture}, \;
T = 
\begin{tikzpicture} [every node/.append style={circle, draw=black, inner sep=0pt, minimum size=16pt}, every draw/.append style={black, thick}]
\node  (11) at (0,0) {};
\node [above left of=11] (12)  {};
\node [above left of=12] (13)  {};
\node [above left of=13] (14)  {2};
\node [above right of=11] (21) {};
\node [above left of=21] (22)  {};
\node [above left of=22] (23)  {2};
\node [above right of=21] (31) {1};
\node [above left of=31] (32)  {3};
\node [above left of=32] (33)  {4};
\draw (11) -- (12);
\draw (12) -- (13);
\draw (13) -- (14);
\draw (21) -- (22);
\draw (22) -- (23);
\draw (31) -- (32);
\draw (32) -- (33);
\draw (11) -- (21);
\draw (21) -- (31);
\draw (12) -- (22);
\draw (22) -- (32);
\draw (23) -- (33);
\draw (13) -- (23);
\end{tikzpicture}.
\]
One possible rectification follows this path
\[
\begin{tikzpicture} [arrow/.style = {thick,-stealth}]
\node  (T1) at (0,0){
  \begin{tikzpicture} [every node/.append style={circle, draw=black, inner sep=0pt, minimum size=24pt, font=\Large}, every draw/.append style={black, thick}, node distance = 1.5cm,anchor=base,baseline,]
  \node  (11) at (0,0) {};
  \node [above left of=11] (12)  {};
  \node [above left of=12] (13)  {$\bullet$};
  \node [above left of=13] (14)  {2};
  \node [above right of=11] (21) {};
  \node [above left of=21] (22)  {};
  \node [above left of=22] (23)  {2};
  \node [above right of=21] (31) {1};
  \node [above left of=31] (32)  {3};
  \node [above left of=32] (33)  {4};
  \draw (11) -- (12);
  \draw (12) -- (13);
  \draw (13) -- (14);
  \draw (21) -- (22);
  \draw (22) -- (23);
  \draw (31) -- (32);
  \draw (32) -- (33);
  \draw (11) -- (21);
  \draw (21) -- (31);
  \draw (12) -- (22);
  \draw (22) -- (32);
  \draw (23) -- (33);
  \draw (13) -- (23);
  \end{tikzpicture}
};
\node[right= -.2cm of T1.east](T1r){};
\node [right = 4cm of T1.south, anchor=south] (T2) {
     \begin{tikzpicture} [every node/.append style={circle, draw=black, inner sep=0pt, minimum size=24pt, font=\Large}, every draw/.append style={black, thick}, node distance = 1.5cm,anchor=base,baseline,]
    \node  (11) at (0,0) {};
    \node [above left of=11] (12)  {};
    \node [above left of=12] (13)  {2};
    \node [above right of=11] (21) {};
    \node [above left of=21] (22)  {};
    \node [above left of=22] (23)  {4};
    \node [above right of=21] (31) {1};
    \node [above left of=31] (32)  {3};
    \draw (11) -- (12);
    \draw (12) -- (13);
    \draw (21) -- (22);
    \draw (22) -- (23);
    \draw (31) -- (32);
    \draw (11) -- (21);
    \draw (21) -- (31);
    \draw (12) -- (22);
    \draw (22) -- (32);
    \draw (13) -- (23);
    \end{tikzpicture}
};
\node [right = .01cm of  T1r -| T2.west](T2l) {};
\node [right = -.2cm of T2l -| T2.east] (T2r) {};
\draw [->] (T1r) -- (T2l);
\node [right= 1cm of T2] (T3){
  \begin{tikzpicture} [every node/.append style={circle, draw=black, inner sep=0pt, minimum size=24pt, font=\Large}, every draw/.append style={black, thick}, node distance = 1.5cm,anchor=base,baseline,]
  \node  (11) at (0,0) {};
  \node [above left of=11] (12)  {};
  \node [above left of=12] (13)  {2};
  \node [above right of=11] (21) {};
  \node [above left of=21] (22)  {$\bullet$};
  \node [above left of=22] (23)  {4};
  \node [above right of=21] (31) {1};
  \node [above left of=31] (32)  {3};
  \draw (11) -- (12);
  \draw (12) -- (13);
  \draw (21) -- (22);
  \draw (22) -- (23);
  \draw (31) -- (32);
  \draw (11) -- (21);
  \draw (21) -- (31);
  \draw (12) -- (22);
  \draw (22) -- (32);
  \draw (13) -- (23);
  \end{tikzpicture}
};
\node [right = .01cm of  T2r -| T3.west](T3l) {};
\node [right = -.2cm of T3l -| T3.east] (T3r) {};
\draw [->] (T2r) -- (T3l);

\node [right=1cm of T3] (T4){
  \begin{tikzpicture} [every node/.append style={circle, draw=black, inner sep=0pt, minimum size=24pt, font=\Large}, every draw/.append style={black, thick}, node distance = 1.5cm,anchor=base,baseline,]
  \node  (11) at (0,0) {};
  \node [above left of=11] (12)  {};
  \node [above left of=12] (13)  {2};
  \node [above right of=11] (21) {};
  \node [above left of=21] (22)  {3};
  \node [above left of=22] (23)  {4};
  \node [above right of=21] (31) {1};
  \draw (11) -- (12);
  \draw (12) -- (13);
  \draw (21) -- (22);
  \draw (22) -- (23);
  \draw (11) -- (21);
  \draw (21) -- (31);
  \draw (12) -- (22);
  \draw (13) -- (23);
  \end{tikzpicture}
};
\node [right = .01cm of  T3r -| T4.west](T4l) {};
\draw [->] (T3r) -- (T4l);

\node [below right = 3cm and .15cm of T1.south, anchor=south] (S1){
  \begin{tikzpicture} [every node/.append style={circle, draw=black, inner sep=0pt, minimum size=24pt, font=\Large}, every draw/.append style={black, thick}, node distance = 1.5cm,anchor=base,baseline,]
  \node  (11) at (0,0) {};
  \node [above left of=11] (12)  {$\bullet$};
  \node [above left of=12] (13)  {2};
  \node [above right of=11] (21) {$\bullet$};
  \node [above left of=21] (22)  {3};
  \node [above left of=22] (23)  {4};
  \node [above right of=21] (31) {1};
  \draw (11) -- (12);
  \draw (12) -- (13);
  \draw (21) -- (22);
  \draw (22) -- (23);
  \draw (11) -- (21);
  \draw (21) -- (31);
  \draw (12) -- (22);
  \draw (13) -- (23);
  \end{tikzpicture}
};
\node [above = .32cm of S1.north] (step){};
\node [above = -.1cm of S1.east] (S1r) {};
\node [below left = 3cm and .24cm of T2.south, anchor=south](S2){
  \begin{tikzpicture} [every node/.append style={circle, draw=black, inner sep=0pt, minimum size=24pt, font=\Large}, every draw/.append style={black, thick}, node distance = 1.5cm,anchor=base,baseline,]
  \node  (11) at (0,0) {};
  \node [above left of=11] (12)  {2};
  \node [above left of=12] (13)  {4};
  \node [above right of=11] (21) {1};
  \node [above left of=21] (22)  {3};
  \draw (11) -- (12);
  \draw (12) -- (13);
  \draw (21) -- (22);
  \draw (11) -- (21);
  \draw (12) -- (22);
  \end{tikzpicture}
};
\node [right = .01cm of  S1r -| S2.west](S2l) {};
\node [right = -.2cm of S2l -| S2.east] (S2r) {};
\draw [->] (S1r) -- (S2l);
\node [below left = 3cm and .24cm of T3.south, anchor=south] (S3){
  \begin{tikzpicture} [every node/.append style={circle, draw=black, inner sep=0pt, minimum size=24pt, font=\Large}, every draw/.append style={black, thick}, node distance = 1.5cm,,anchor=base,baseline,]
  \node  (11) at (0,0) {$\bullet$};
  \node [above left of=11] (12)  {2};
  \node [above left of=12] (13)  {4};
  \node [above right of=11] (21) {1};
  \node [above left of=21] (22)  {3};
  \draw (11) -- (12);
  \draw (12) -- (13);
  \draw (21) -- (22);
  \draw (11) -- (21);
  \draw (12) -- (22);
  \end{tikzpicture}\;
};
\node [right = .01cm of  S2r -| S3.west](S3l) {};
\node [right = -.2cm of S3l -| S3.east] (S3r) {};
\draw [->] (S2r) -- (S3l);
\node [below left = 3cm and .26cm of T4.south, anchor=south] (S4){
  \begin{tikzpicture} [every node/.append style={circle, draw=black, inner sep=0pt, minimum size=24pt, font=\Large}, every draw/.append style={black, thick}, node distance = 1.5cm,anchor=base,baseline,]
  \node  (11) at (0,0) {1};
  \node [above left of=11] (12)  {2};
  \node [above left of=12] (13)  {4};
  \node [above right of=11] (21) {3};
  \draw (11) -- (12);
  \draw (12) -- (13);
  \draw (11) -- (21);
  \end{tikzpicture}
};
\node [right = .01cm of  S3r -| S4.west](S4l) {};
\node [right = -.2cm of S4l -| S4.east] (S4r) {};
\draw [->] (S3r) -- (S4l);

 \draw[->] (T4.south) |- (step) -| (S1.north);

\end{tikzpicture}
\]
so one rectification of $T$ is \begin{tikzpicture} [every node/.append style={circle, draw=black, inner sep=0pt, minimum size=16pt}, every draw/.append style={black, thick}, baseline={([yshift=-.5ex]current bounding box.center)}]
\node  (11) at (0,0) {1};
\node [above left of=11] (12)  {2};
\node [above left of=12] (13)  {4};
\node [above right of=11] (21) {3};
\draw (11) -- (12);
\draw (12) -- (13);
\draw (11) -- (21);
\end{tikzpicture}.

Another rectification follows this path
\[
\begin{tikzpicture} [arrow/.style = {thick,-stealth}]
\node (T1) at (0,0) {
  \begin{tikzpicture} [every node/.append style={circle, draw=black, inner sep=0pt, minimum size=24pt, font=\Large}, every draw/.append style={black, thick}, node distance = 1.5cm,anchor=base,baseline,]
  \node  (11) at (0,0) {};
  \node [above left of=11] (12)  {};
  \node [above left of=12] (13)  {$\bullet$};
  \node [above left of=13] (14)  {2};
  \node [above right of=11] (21) {};
  \node [above left of=21] (22)  {$\bullet$};
  \node [above left of=22] (23)  {2};
  \node [above right of=21] (31) {1};
  \node [above left of=31] (32)  {3};
  \node [above left of=32] (33)  {4};
  \draw (11) -- (12);
  \draw (12) -- (13);
  \draw (13) -- (14);
  \draw (21) -- (22);
  \draw (22) -- (23);
  \draw (31) -- (32);
  \draw (32) -- (33);
  \draw (11) -- (21);
  \draw (21) -- (31);
  \draw (12) -- (22);
  \draw (22) -- (32);
  \draw (23) -- (33);
  \draw (13) -- (23);
  \end{tikzpicture}
};
\node[right= -.2cm of T1.east](T1r){};

\node [right = 4cm of T1.south, anchor=south] (T2) {
  \begin{tikzpicture} [every node/.append style={circle, draw=black, inner sep=0pt, minimum size=24pt, font=\Large}, every draw/.append style={black, thick}, node distance = 1.5cm,anchor=base,baseline,]
  \node  (11) at (0,0) {};
  \node [above left of=11] (12)  {};
  \node [above left of=12] (13)  {2};
  \node [above right of=11] (21) {};
  \node [above left of=21] (22)  {2};
  \node [above left of=22] (23)  {4};
  \node [above right of=21] (31) {1};
  \node [above left of=31] (32)  {3};
  \draw (11) -- (12);
  \draw (12) -- (13);
  \draw (21) -- (22);
  \draw (23) -- (22);
  \draw (23) -- (13);
  \draw (31) -- (32);
  \draw (11) -- (21);
  \draw (21) -- (31);
  \draw (12) -- (22);
  \draw (22) -- (32);
  \end{tikzpicture}
};
\node [right = .01cm of  T1r -| T2.west](T2l) {};
\node [right = -.2cm of T2l -| T2.east] (T2r) {};
\draw [->] (T1r) -- (T2l);

\node [right = 1cm of T2] (T3) {
  \begin{tikzpicture} [every node/.append style={circle, draw=black, inner sep=0pt, minimum size=24pt, font=\Large}, every draw/.append style={black, thick}, node distance = 1.5cm,anchor=base,baseline,]
  \node  (11) at (0,0) {};
  \node [above left of=11] (12)  {};
  \node [above left of=12] (13)  {2};
  \node [above right of=11] (21) {$\bullet$};
  \node [above left of=21] (22)  {2};
  \node [above left of=22] (23)  {4};
  \node [above right of=21] (31) {1};
  \node [above left of=31] (32)  {3};
  \draw (11) -- (12);
  \draw (12) -- (13);
  \draw (21) -- (22);
  \draw (23) -- (22);
  \draw (23) -- (13);
  \draw (31) -- (32);
  \draw (11) -- (21);
  \draw (21) -- (31);
  \draw (12) -- (22);
  \draw (22) -- (32);
  \end{tikzpicture}
};
\node [right = .01cm of  T2r -| T3.west](T3l) {};
\node [right = -.2cm of T3l -| T3.east] (T3r) {};
\draw [->] (T2r) -- (T3l);

\node [right = 1cm of T3] (T4) {
  \begin{tikzpicture} [every node/.append style={circle, draw=black, inner sep=0pt, minimum size=24pt, font=\Large}, every draw/.append style={black, thick}, node distance = 1.5cm,anchor=base,baseline,]
  \node  (11) at (0,0) {};
  \node [above left of=11] (12)  {};
  \node [above left of=12] (13)  {2};
  \node [above right of=11] (21) {1};
  \node [above left of=21] (22)  {2};
  \node [above left of=22] (23)  {4};
  \node [above right of=21] (31) {3};
  \draw (11) -- (12);
  \draw (12) -- (13);
  \draw (21) -- (22);
  \draw (23) -- (22);
  \draw (23) -- (13);
  \draw (11) -- (21);
  \draw (21) -- (31);
  \draw (12) -- (22);
  \end{tikzpicture}
};
\node [right = .01cm of  T3r -| T4.west](T4l) {};
\node [right = -.2cm of T4l -| T4.east] (T4r) {};
\draw [->] (T3r) -- (T4l);

\node [below right = 3cm and .24cm of T1.south, anchor=south] (S1){
  \begin{tikzpicture} [every node/.append style={circle, draw=black, inner sep=0pt, minimum size=24pt, font=\Large}, every draw/.append style={black, thick}, node distance = 1.5cm,anchor=base,baseline,]
  \node  (11) at (0,0) {};
  \node [above left of=11] (12)  {$\bullet$};
  \node [above left of=12] (13)  {2};
  \node [above right of=11] (21) {1};
  \node [above left of=21] (22)  {2};
  \node [above left of=22] (23)  {4};
  \node [above right of=21] (31) {3};
  \draw (11) -- (12);
  \draw (12) -- (13);
  \draw (21) -- (22);
  \draw (22) -- (23);
  \draw (11) -- (21);
  \draw (21) -- (31);
  \draw (12) -- (22);
  \draw (13) -- (23);
  \end{tikzpicture}
};
\node [above = .32cm of S1.north] (step){};
\node [above = -.1cm of S1.east] (S1r) {};
\node [below right = 3cm and .01cm of T2.south, anchor=south](S2){
  \begin{tikzpicture} [every node/.append style={circle, draw=black, inner sep=0pt, minimum size=24pt, font=\Large}, every draw/.append style={black, thick}, node distance = 1.5cm,anchor=base,baseline,]
  \node  (11) at (0,0) {};
  \node [above left of=11] (12)  {2};
  \node [above left of=12] (13)  {4};
  \node [above right of=11] (21) {1};
  \node [above left of=21] (22)  {4};
  \node [above right of=21] (31) {3};
  \draw (11) -- (12);
  \draw (12) -- (13);
  \draw (21) -- (22);
  \draw (11) -- (21);
  \draw (21) -- (31);
  \draw (12) -- (22);
  \end{tikzpicture}
};
\node [right = .01cm of  S1r -| S2.west](S2l) {};
\node [right = -.2cm of S2l -| S2.east] (S2r) {};
\draw [->] (S1r) -- (S2l);

\node [below right = 3cm and .001cm of T3.south, anchor=south] (S3){
  \begin{tikzpicture} [every node/.append style={circle, draw=black, inner sep=0pt, minimum size=24pt, font=\Large}, every draw/.append style={black, thick}, node distance = 1.5cm,,anchor=base,baseline,]
\node  (11) at (0,0) {$\bullet$};
\node [above left of=11] (12)  {2};
\node [above left of=12] (13)  {4};
\node [above right of=11] (21) {1};
\node [above left of=21] (22)  {4};
\node [above right of=21] (31) {3};
\draw (11) -- (12);
\draw (12) -- (13);
\draw (21) -- (22);
\draw (11) -- (21);
\draw (21) -- (31);
\draw (12) -- (22);
\end{tikzpicture}
};
\node [right = .01cm of  S2r -| S3.west](S3l) {};
\node [right = -.2cm of S3l -| S3.east] (S3r) {};
\draw [->] (S2r) -- (S3l);
\node [below left = 3cm and .01cm of T4.south, anchor=south] (S4){
  \begin{tikzpicture} [every node/.append style={circle, draw=black, inner sep=0pt, minimum size=24pt, font=\Large}, every draw/.append style={black, thick}, node distance = 1.5cm,,anchor=base,baseline,]
\node  (11) at (0,0) {1};
\node [above left of=11] (12)  {2};
\node [above left of=12] (13)  {4};
\node [above right of=11] (21) {3};
\node [above left of=21] (22)  {4};
\draw (11) -- (12);
\draw (12) -- (13);
\draw (21) -- (22);
\draw (11) -- (21);
\draw (12) -- (22);
\end{tikzpicture}
};
\node [right = .01cm of  S3r -| S4.west](S4l) {};
\node [right = -.2cm of S4l -| S4.east] (S4r) {};
\draw [->] (S3r) -- (S4l);

 \draw[->] (T4.south) |- (step) -| (S1.north);
\end{tikzpicture}
\]
so another rectification of $T$ is \begin{tikzpicture} [every node/.append style={circle, draw=black, inner sep=0pt, minimum size=16pt}, every draw/.append style={black, thick}, baseline={([yshift=-.5ex]current bounding box.center)}]
\node  (11) at (0,0) {1};
\node [above left of=11] (12)  {2};
\node [above left of=12] (13)  {4};
\node [above right of=11] (21) {3};
\node [above left of=21] (22)  {4};
\draw (11) -- (12);
\draw (12) -- (13);
\draw (21) -- (22);
\draw (11) -- (21);
\draw (12) -- (22);
\end{tikzpicture}. One can check that these two are the only rectifications of $T$.
\end{example}

  We say a skew increasing $\mathcal{P}$-tableau \textbf{rectifies uniquely} if it has exactly one rectification. We say an increasing $\mathcal{P}$-tableau $T$ of straight shape is a \textbf{unique rectification target (URT)} if every skew increasing $\mathcal{P}$-tableau which rectifies to $T$ rectifies uniquely.

\section{Unique rectification in slant sums}
\label{Section adding to minimum and maximal elements}
In this section, we explore the structure of URTs in posets that are built out of smaller posets by a slant sum operation.

\begin{definition}
\cite{Proctor:JACO}
Let $\mathcal{P}, \mathcal{Q}$ be disjoint posets. Assume $\mathcal{Q}$ has a minimum element $\hat{0}_\mathcal{Q}$. Let $p \in \mathcal{P}$. 
The \textbf{slant sum} of $\mathcal{Q}$ to $\mathcal{P}$ at $p$, denoted $\mathcal{P} \, _{p}/^{\hat{0}_\mathcal{Q}} \, \mathcal{Q}$ is the poset on $\mathcal{P} \sqcup \mathcal{Q}$ induced by imposing $\hat{0}_\mathcal{Q} \gtrdot p$ together with the orders on $\mathcal{P}$ and $\mathcal{Q}$. Because a poset's minimum is unique, we will usually drop the $\hat{0}_\mathcal{Q}$ and denote the slant sum as $\mathcal{P} \slantsum{p} \mathcal{Q}$. If $\mathcal{Q}_1, \dots , \mathcal{Q}_n$ are pairwise disjoint posets which each have minima, then $\mathcal{P} \slantsum{p} (\mathcal{Q}_1, \dots ,\mathcal{Q}_n)$ denotes the iterated slant sum of posets at $p$. (Clearly, the order of the $\mathcal{Q}_i$s does not matter.)
Finally, given $p_1, \dots, p_m \in \mathcal{P}$ and pairwise disjoint posets $\mathcal{Q}_i^j$ with minima, we write
\[\mathcal{P} \slantsum{p_1} (\mathcal{Q}_1^1, \dots, \mathcal{Q}_{r_1}^1) \slantsum{p_2} \cdots \slantsum{p_m} (\mathcal{Q}_1^m, \dots, \mathcal{Q}_{r_m}^m)\] to denote
the result of slant summing each $\mathcal{Q}_i^j$ onto $p_j$ (in any order).
\end{definition}

We say that a poset $\mathcal{P}$ is a \textbf{chain} if all pairs of elements of $\mathcal{P}$ are comparable, that is, if $\mathcal{P}$ is a total order. The size of a chain $\mathcal{P}$ is the number of its elements.

\begin{proposition}\label{thm:adding min}
Let $\mathcal{P}$ be a poset with $\hat{0}_\mathcal{P}$ and let $\mathcal{C} = \{ c\}$ be a chain of size $1$. Let $\mathcal{R} = \mathcal{C} \slantsum{c} \mathcal{P}$.

If $T$ is a a skew increasing $\mathcal{P}$-tableau that rectifies uniquely (in $\mathcal{P}$), then the skew increasing $\mathcal{R}$-tableau $T_\mathcal{R}$ of shape $\nu \sqcup c /\lambda \sqcup c$ defined by $T_\mathcal{R}(x) = T(x)$ rectifies uniquely (in $\mathcal{R}$).
\end{proposition}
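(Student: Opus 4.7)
The plan is to establish that every rectification sequence of $T_\mathcal{R}$ in $\mathcal{R}$ decomposes as a rectification sequence of $T$ in $\mathcal{P}$ followed by a single final slide $\mathsf{Slide}_{\{c\}}$. Combining this with the hypothesis that $T$ rectifies uniquely will immediately yield that $T_\mathcal{R}$ rectifies uniquely.

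The first step is a structural observation about inner corners: since $c$ is the minimum of $\mathcal{R}$ and every other element of $\mathcal{R}$ strictly dominates $c$, the element $c$ is maximal in an intermediate inner shape $\lambda'$ if and only if $\lambda' = \{c\}$, and any rectification sequence must eventually slide $c$ to reach a straight shape. Together these force every rectification sequence of $T_\mathcal{R}$ into the form $(\gamma_1, \ldots, \gamma_{k-1}, \{c\})$, where none of the $\gamma_i$ for $i < k$ contains $c$. The second step verifies that, for each such $\gamma_i \subseteq \mathcal{P}$, the slide $\mathsf{Slide}_{\gamma_i}$ computed in $\mathcal{R}$ agrees on $\mathcal{P}$-entries with the corresponding slide in $\mathcal{P}$, while $c$ remains skewed out. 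This holds because $\mathsf{AddDots}_{\gamma_i}$ enlarges the support only within $\mathcal{P}$, each $\mathsf{Swap}_{\bullet,n}$ does not change the support, and $\mathsf{RemoveDots}$ only contracts it; so the support stays entirely in $\mathcal{P}$ throughout, and the cell-wise swap conditions coincide with those in $\mathcal{P}$.

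Iterating, the prefix $(\gamma_1, \ldots, \gamma_{k-1})$ of any rectification sequence of $T_\mathcal{R}$ matches a rectification sequence of $T$ in $\mathcal{P}$ and produces the same intermediate tableau on $\mathcal{P}$. By the unique rectification hypothesis, this intermediate target is the unique rectification $U$ of $T$, independent of choices, so the corresponding $\mathcal{R}$-tableau $U_\mathcal{R}$ is likewise independent of choices. Finally, $\mathsf{Slide}_{\{c\}}(U_\mathcal{R})$ is a single deterministic slide with no remaining choice of inner-corner set, so it produces exactly one output. Hence $T_\mathcal{R}$ has exactly one rectification. The most delicate point is the second step above: carefully verifying that the presence of $c$ does not affect slides whose inner-corner set lies in $\mathcal{P}$, which ultimately reduces to the observation that the cover relation $c \lessdot \hat{0}_\mathcal{P}$ in $\mathcal{R}$ plays no role in any swap until the final $\{c\}$-slide, because $c$ is never in the support prior to that slide.
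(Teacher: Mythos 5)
Your proof is correct and follows essentially the same approach as the paper: observe that $c$ can only become an inner corner once the inner shape has shrunk to $\{c\}$ (so the final slide is forced to be $\mathsf{Slide}_{\{c\}}$), observe that the earlier slides with inner-corner sets in $\mathcal{P}$ reproduce a rectification of $T$ within $\mathcal{P}$ (so the penultimate tableau is determined by the unique-rectification hypothesis), and conclude. You spell out more explicitly than the paper why the cover relation $c \lessdot \hat{0}_\mathcal{P}$ plays no role before the final slide, but the decomposition and key lemma are the same.
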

\begin{proof}
Suppose $U$ is the unique rectification of $T$. Let $p$ be the minimum of $\mathcal{P}$. 

When we rectify $T_\mathcal{R}$ in $\mathcal{R}$, since $c$ is covered only by $p$, the last step of rectification is to slide $c$. Just before this final step, one has a tableau $S$ whose shape is an order ideal in $\mathcal{P} \subset \mathcal{R}$. 
Clearly, a process of slides producing $S$ from $T_\mathcal{R}$ (in $\mathcal{R}$) corresponds to a sequence of slides rectifying $T$ (in $\mathcal{P}$).
Thus, since $T$ rectifies uniquely to $U$, we have $S(x) = U(x)$ for all $x \in \mathcal{P}$. 

The final slide is necessarily $\mathsf{Slide}_{\{c\}}$, since $c$ is the only inner corner of $S$. Hence, there are no further choices to make and thus $T_\mathcal{R}$ rectifies uniquely. 
\end{proof}

\begin{remark}
\label{Remark counterexample tail}
The converse of Proposition~\ref{thm:adding min} is false. 
Let 
\[\text {$\mathcal{P} = \begin{tikzpicture}[every node/.append style={circle, draw=black, inner sep=0pt, minimum size=16pt}, every draw/.append style={black, thick}, baseline={([yshift=-.5ex]current bounding box.center)}]
\node  (b) at (0,0) {};
\node [above left of=b] (l)  {};
\node [above right of=b] (r)  {};
\node [above right of=l] (t) {};
\node [above of =t] (t1) {};
\draw (b) -- (l);
\draw (b) -- (r);
\draw (t) -- (r);
\draw (t) -- (l);
\draw (t) -- (t1);
\end{tikzpicture}$ and $\mathcal{R} = \begin{tikzpicture}[every node/.append style={circle, draw=black, inner sep=0pt, minimum size=16pt}, every draw/.append style={black, thick}, baseline={([yshift=-.5ex]current bounding box.center)}]
\node  (b1) at (0,0) {};
\node [above of=b1] (b) {};
\node [above left of=b] (l)  {};
\node [above right of=b] (r)  {};
\node [above right of=l] (t) {};
\node [above of =t] (t1) {};
\draw (b1) -- (b);
\draw (b) -- (l);
\draw (b) -- (r);
\draw (t) -- (r);
\draw (t) -- (l);
\draw (t) -- (t1);
\end{tikzpicture}$.}\]
For the skew increasing $\mathcal{P}$-tableaux $T_\mathcal{P} = \begin{tikzpicture}[every node/.append style={circle, draw=black, inner sep=0pt, minimum size=16pt}, every draw/.append style={black, thick}, baseline={([yshift=-.5ex]current bounding box.center)}]
\node  (b) at (0,0) {};
\node [above left of=b] (l)  {};
\node [above right of=b] (r)  {};
\node [above right of=l] (t) {1};
\node [above of =t] (t1) {2};
\draw (b) -- (l);
\draw (b) -- (r);
\draw (t) -- (r);
\draw (t) -- (l);
\draw (t) -- (t1);
\end{tikzpicture}$, we have 
\[
\mathsf{rects}(T_\mathcal{P}) = \left\{ \text{$\begin{tikzpicture}[every node/.append style={circle, draw=black, inner sep=0pt, minimum size=16pt}, every draw/.append style={black, thick}, baseline={([yshift=-.5ex]current bounding box.center)}]
\node  (b) at (0,0) {1};
\node [above left of=b] (l)  {2};
\node [above right of=b] (r)  {};
\node [above right of=l] (t) {};
\node [above of =t] (t1) {};
\draw (b) -- (l);
\draw (b) -- (r);
\draw (t) -- (r);
\draw (t) -- (l);
\draw (t) -- (t1);
\end{tikzpicture}$, $\begin{tikzpicture}[every node/.append style={circle, draw=black, inner sep=0pt, minimum size=16pt}, every draw/.append style={black, thick}, baseline={([yshift=-.5ex]current bounding box.center)}]
\node  (b) at (0,0) {1};
\node [above left of=b] (l)  {};
\node [above right of=b] (r)  {2};
\node [above right of=l] (t) {};
\node [above of =t] (t1) {};
\draw (b) -- (l);
\draw (b) -- (r);
\draw (t) -- (r);
\draw (t) -- (l);
\draw (t) -- (t1);
\end{tikzpicture}$, and $\begin{tikzpicture}[every node/.append style={circle, draw=black, inner sep=0pt, minimum size=16pt}, every draw/.append style={black, thick}, baseline={([yshift=-.5ex]current bounding box.center)}]
\node  (b) at (0,0) {1};
\node [above left of=b] (l)  {2};
\node [above right of=b] (r)  {2};
\node [above right of=l] (t) {};
\node [above of =t] (t1) {};
\draw (b) -- (l);
\draw (b) -- (r);
\draw (t) -- (r);
\draw (t) -- (l);
\draw (t) -- (t1);
\end{tikzpicture}$} \right\}.
\]
However, the skew $\mathcal{R}$-tableaux $T_\mathcal{R} = \begin{tikzpicture}[every node/.append style={circle, draw=black, inner sep=0pt, minimum size=16pt}, every draw/.append style={black, thick}, baseline={([yshift=-.5ex]current bounding box.center)}]
\node  (b1) at (0,0) {};
\node [above of=b1] (b) {};
\node [above left of=b] (l)  {};
\node [above right of=b] (r)  {};
\node [above right of=l] (t) {1};
\node [above of =t] (t1) {2};
\draw (b1) -- (b);
\draw (b) -- (l);
\draw (b) -- (r);
\draw (t) -- (r);
\draw (t) -- (l);
\draw (t) -- (t1);
\end{tikzpicture}$ rectifies uniquely to $\begin{tikzpicture}[every node/.append style={circle, draw=black, inner sep=0pt, minimum size=16pt}, every draw/.append style={black, thick}, baseline={([yshift=-.5ex]current bounding box.center)}]
\node  (b1) at (0,0) {1};
\node [above of=b1] (b) {2};
\node [above left of=b] (l)  {};
\node [above right of=b] (r)  {};
\node [above right of=l] (t) {};
\node [above of =t] (t1) {};
\draw (b1) -- (b);
\draw (b) -- (l);
\draw (b) -- (r);
\draw (t) -- (r);
\draw (t) -- (l);
\draw (t) -- (t1);
\end{tikzpicture}$.
\end{remark}

\begin{remark}\label{rem:URT_not_pchain}
Extending a poset by a new maximum element does not preserve unique rectification targets. For example, it is easy to check that
$\begin{tikzpicture}[every node/.append style={circle, draw=black, inner sep=0pt, minimum size=16pt}, every draw/.append style={black, thick}, baseline={([yshift=-.5ex]current bounding box.center)}]
\node  (b) at (0,0) {1};
\node [above left of=b] (l)  {2};
\node [above right of=b] (r)  {};
\node [above right of=l] (t) {};
\draw (b) -- (l);
\draw (b) -- (r);
\draw (t) -- (r);
\draw (t) -- (l);
\end{tikzpicture}$
is a unique rectification target in the poset $\begin{tikzpicture}[every node/.append style={circle, draw=black, inner sep=0pt, minimum size=16pt}, every draw/.append style={black, thick}, baseline={([yshift=-.5ex]current bounding box.center)}]
\node  (b) at (0,0) {};
\node [above left of=b] (l)  {};
\node [above right of=b] (r)  {};
\node [above right of=l] (t) {};
\draw (b) -- (l);
\draw (b) -- (r);
\draw (t) -- (r);
\draw (t) -- (l);
\end{tikzpicture}$.
 However, as shown in Remark~\ref{Remark counterexample tail}, $\begin{tikzpicture}[every node/.append style={circle, draw=black, inner sep=0pt, minimum size=16pt}, every draw/.append style={black, thick}, baseline={([yshift=-.5ex]current bounding box.center)}]
\node  (b) at (0,0) {1};
\node [above left of=b] (l)  {2};
\node [above right of=b] (r)  {};
\node [above right of=l] (t) {};
\node [above of =t] (t1) {};
\draw (b) -- (l);
\draw (b) -- (r);
\draw (t) -- (r);
\draw (t) -- (l);
\draw (t) -- (t1);
\end{tikzpicture}$ is not a unique rectification target in the poset $\begin{tikzpicture}[every node/.append style={circle, draw=black, inner sep=0pt, minimum size=16pt}, every draw/.append style={black, thick}, baseline={([yshift=-.5ex]current bounding box.center)}]
\node  (b) at (0,0) {};
\node [above left of=b] (l)  {};
\node [above right of=b] (r)  {};
\node [above right of=l] (t) {};
\node [above of =t] (t1) {};
\draw (b) -- (l);
\draw (b) -- (r);
\draw (t) -- (r);
\draw (t) -- (l);
\draw (t) -- (t1);
\end{tikzpicture}$.
\end{remark}

We now introduce some useful notation and terminology that we will need. Suppose $\mathcal{Q}$ is a shape of $\mathcal{P}$, and $T$ is a skew increasing $\mathcal{P}$-tableau. Then, the \textbf{restriction} of $T$ to $\mathcal{Q}$, denoted $T|_\mathcal{Q}$ is the increasing $\mathcal{Q}$-tableau given by restricting the domain of $T$ to those elements that are in $\mathcal{Q}$.

\begin{definition}
Suppose $\mathcal{F}$ is an order filter of a poset $\mathcal{P}$ with a $\hat{0}_\mathcal{F}$. We say $\mathcal{F}$ is a \textbf{funnel} if, for all $p \in \mathcal{P} \setminus \mathcal{F}$ with $p<f$ for some $f \in \mathcal{F}$, we have $p<\hat{0}_\mathcal{F}$.
\end{definition}

Note that, in particular, the embedded copy of $\mathcal{Q}$ in any slant sum $\mathcal{P} \, _{p}/^{\hat{0}_\mathcal{Q}} \, \mathcal{Q}$ forms a funnel.

\begin{definition}
\label{definition corresponding}
Let $\mathcal{F}$ be a funnel of a poset $\mathcal{P}$. Suppose $T$ is a skew increasing $\mathcal{P}$-tableau of shape $\nu/\lambda$ with a rectification $U$.
Then, the \textbf{corresponding} skew increasing $\mathcal{F}$-tableau of $T$ for $U$, denoted as $(T \rightarrow U)|_\mathcal{F},$ is the increasing $\mathcal{F}$-tableau defined by 
\[(T \rightarrow U)|_\mathcal{F} = T|_\mathcal{E},\]
where
\[\mathcal{E} \coloneqq
		\begin{cases} 
            {\{x \in \nu/\lambda \cap \mathcal{F} : T(x) \geq U(\hat{0}_\mathcal{F})\}}, & \text{ if } \hat{0}_\mathcal{F} \in \nu/\lambda; \\
            \emptyset, & \text{ if } \hat{0}_\mathcal{F} \notin \nu/\lambda.
        \end{cases}
        \]
\end{definition}

\begin{proposition}
\label{Theorem corresponding respects slides}
Let $\mathcal{F}$ be a funnel of a poset $\mathcal{P}$ and let $T$ be a skew increasing $\mathcal{P}$-tableau.  
If $U$ is a rectification of $T$, then $U|_\mathcal{F}$ is a rectification of $(T \rightarrow U)|_\mathcal{F}$. 
\end{proposition}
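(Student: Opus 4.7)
The plan is to proceed by induction on the length $n$ of a rectification path $T = T_0 \xrightarrow{\gamma_1} T_1 \xrightarrow{\gamma_2} \cdots \xrightarrow{\gamma_n} T_n = U$ realizing $U$ as a rectification of $T$.

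For the base case $n = 0$, $T = U$ is already of straight shape. If $\hat{0}_\mathcal{F} \notin U$, then both $(T \to U)|_\mathcal{F}$ and $U|_\mathcal{F}$ are empty and there is nothing to show. If $\hat{0}_\mathcal{F} \in U$, then for every $x \in U \cap \mathcal{F}$ we have $x \geq \hat{0}_\mathcal{F}$ in $\mathcal{P}$, so strict order-preservation of $U$ forces $U(x) \geq U(\hat{0}_\mathcal{F})$. Hence $\mathcal{E} = U \cap \mathcal{F}$ and $(T \to U)|_\mathcal{F} = U|_\mathcal{F}$, which is trivially its own rectification.

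For the inductive step, I would apply the inductive hypothesis to the shorter rectification $T_1 \to U$, obtaining that $U|_\mathcal{F}$ is a rectification of $(T_1 \to U)|_\mathcal{F}$. It then suffices to exhibit either an equality $(T \to U)|_\mathcal{F} = (T_1 \to U)|_\mathcal{F}$ or a single slide inside $\mathcal{F}$ carrying $(T \to U)|_\mathcal{F}$ to $(T_1 \to U)|_\mathcal{F}$. For this, I would define $\gamma_1' \subseteq \ic((T \to U)|_\mathcal{F})$ by retaining the corners of $\gamma_1$ that already lie in $\mathcal{F}$ and additionally adjoining $\hat{0}_\mathcal{F}$ itself whenever some $p \in \gamma_1 \setminus \mathcal{F}$ satisfies $p \lessdot \hat{0}_\mathcal{F}$ with $\hat{0}_\mathcal{F} \notin \mathcal{E}$. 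The key claim to verify is $\mathsf{Slide}_{\gamma_1'}((T \to U)|_\mathcal{F}) = (T_1 \to U)|_\mathcal{F}$ when $\gamma_1' \neq \emptyset$, and equality of the two corresponding tableaux otherwise.

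The main obstacle will be this compatibility check, which I would verify by unwinding both slides into their constituent $\mathsf{Swap}_{\bullet,k}$ operations and matching them step-by-step. The funnel hypothesis plays an essential role: any cover $p \lessdot q$ with $p \notin \mathcal{F}$ and $q \in \mathcal{F}$ forces $q = \hat{0}_\mathcal{F}$, so all dot/entry swaps that cross the boundary of $\mathcal{F}$ must pass through $\hat{0}_\mathcal{F}$. This confines entries of $T$ with value at least $U(\hat{0}_\mathcal{F})$ that begin in $\mathcal{F}$ to remain in $\mathcal{F}$ throughout the slide, since any exit would commandeer the position $\hat{0}_\mathcal{F}$ in a way incompatible with $U(\hat{0}_\mathcal{F})$ being the value ultimately placed there. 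Symmetrically, entries of value strictly less than $U(\hat{0}_\mathcal{F})$ contribute to neither $(T \to U)|_\mathcal{F}$ nor $(T_1 \to U)|_\mathcal{F}$, so their movement is invisible to the restricted picture. The most delicate point is the case in which the slide on $T$ forces motion through $\hat{0}_\mathcal{F}$ from below, which is precisely what motivates adjoining $\hat{0}_\mathcal{F}$ to $\gamma_1'$.
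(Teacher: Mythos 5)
Your overall inductive architecture---induct on the length $n$ of the rectification path, and establish a ``one slide of $T$ corresponds to at most one slide of the restricted tableau'' lemma---is structurally close to what the paper does: the paper likewise produces exactly one set $\theta_i$ of inner corners of the restricted tableau for each $\gamma_i$ in the given rectification path, then proves by a doubly-indexed induction (tracking both the slide index $i$ and the swap index $j$) that the resulting tableaux stay in lockstep. So the one-step claim you isolate is essentially correct. The gap lies in your candidate for $\gamma_1'$.

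There are two problems with the prescription ``$\gamma_1'$ = (corners of $\gamma_1$ already in $\mathcal{F}$) $\cup$ (possibly $\hat{0}_\mathcal{F}$).'' First, a small observation: if $F = (T \to U)|_\mathcal{F}$ is nonempty, then by definition $\hat{0}_\mathcal{F} \in \nu/\lambda$; since $\lambda$ is an order ideal, no element of $\mathcal{F}$ (the filter over $\hat{0}_\mathcal{F}$) can lie in $\lambda$, so $\gamma_1 \cap \mathcal{F} = \emptyset$ always. Thus the first half of your recipe is vacuous. Second, and more seriously, $\hat{0}_\mathcal{F}$ is generally \emph{not} an inner corner of $F$. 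The inner shape of $F$ is $\lambda' = \{ x \in \mathcal{F}\cap\nu : T(x) < m\}$ (where $m = U(\hat{0}_\mathcal{F})$), and this set typically contains many elements strictly above $\hat{0}_\mathcal{F}$; the inner corners $\ic(F)$ are the \emph{maximal} elements of $\lambda'$, which need not include $\hat{0}_\mathcal{F}$. So the set you propose is not contained in $\ic(F)$ and $\mathsf{Slide}_{\gamma_1'}(F)$ is not well-defined.

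Concretely: take $\mathcal{F} = \{\hat{0}_\mathcal{F} \lessdot y_1 \lessdot z_1, \ \hat{0}_\mathcal{F} \lessdot y_2 \lessdot z_2\}$ with two external elements $p_2 \lessdot p_1 \lessdot \hat{0}_\mathcal{F}$, $\lambda = \{p_1,p_2\}$, $T(\hat{0}_\mathcal{F})=1$, $T(y_1)=T(y_2)=2$, $T(z_1)=T(z_2)=3$. Rectifying with $\gamma_1=\{p_1\}$, $\gamma_2=\{p_2\}$ gives $U(\hat{0}_\mathcal{F})=3$, so $m=3$, $\mathcal{E}_T = \{z_1,z_2\}$, and $\ic(F) = \{y_1,y_2\}$. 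Your $\gamma_1' = \{\hat{0}_\mathcal{F}\}$ is not in $\ic(F)$, while the correct first slide of $F$ is at $\{y_1,y_2\}$. The underlying issue is that $\theta_1$ is determined by the \emph{dynamics} of $\mathsf{Slide}_{\gamma_1}$ of $T$: a dot can enter $\mathcal{F}$ through $\hat{0}_\mathcal{F}$ early in the swap sequence, then propagate (and split) far up inside $\mathcal{F}$ before reaching the boundary between $\lambda'$ and $\domain(F)$, and $\theta_1$ must record exactly which inner corners of $F$ hand a dot across that boundary. This cannot be read off from $\gamma_1$ and the static shape data alone, which is why the paper defines $\theta_i$ in terms of the intermediate dotted tableaux $T_i^j$. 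Your ``unwind the swaps and match step-by-step'' plan is the right idea for the verification, but it cannot succeed with the $\gamma_1'$ you wrote down; you need the dynamically-defined set.
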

\begin{proof}
For concision, write $F$ for $(T \rightarrow U)|_\mathcal{F}$. The proposition is trivial if $F$ is the empty tableau, so we assume $F$ is supported on at least one element of $\mathcal{F}$.

Let $T \xrightarrow{\gamma_1, \dots, \gamma_n} U$.
Using the sequence of sets of inner corners $\gamma_1, \dots ,\gamma_n$, we will recursively construct sets of inner corners $\theta_1, \dots ,\theta_n$ that rectify $F$ to $S|_\mathcal{F}$.

We write $T_i^j$  to represent $T$ \emph{after} $i$ slides and just \emph{before} the $j$th swap; that is, 
\[T_i^j \coloneqq \begin{cases}
\mathsf{Swap}_{\bullet,j-1} \circ \cdots \circ \mathsf{Swap}_{\bullet,1} \circ \mathsf{AddDots}_{\gamma_{i+1}} \circ \mathsf{Slide}_{\gamma_1, \ldots, \gamma_{i}}(T), & \text{ if } j \geq 1;  \\
\mathsf{Slide}_{\gamma_1, \ldots, \gamma_{i}}(T), &\text{ if } j = 0.
\end{cases} \]
In particular, $T_0^0 = T$ and $T_0^1 = \mathsf{AddDots}_{\gamma_1}(T)$.

Let $F_0 \coloneqq F$.
For $1\leq i \leq n$, recursively define  $\theta_i$ and $F_i$ as follows
\begin{align*}
\theta_i \coloneqq  \{ & c \in \ic(F_{i-1}) : \text{there exists a $j$ with $T_i^j(c) = \bullet$ and} \\ 
& \text{$T_i^{j+1}(f) = \bullet$ for some $f \gtrdot c$ with $f \in \domain(F_{i-1})$}\}
\end{align*}
and
\[F_i \coloneqq \mathsf{Slide}_{\theta_1, \ldots, \theta_i}(F).\]
Similar to $T_i^j$, we let $F_i^j$ be
\[F_i^j \coloneqq \begin{cases}
\mathsf{Swap}_{\bullet,j-1} \circ \cdots \circ \mathsf{Swap}_{\bullet,1} \circ \mathsf{AddDots}_{\theta_{i+1}}(F_i), &  \text{ if } j \geq 1;  \\
F_i, &\text{ if } j = 0.
\end{cases}\]
Finally, we define certain subsets of $\theta_i$ which will be useful in our analysis. Let $\theta_i^0 = \emptyset$ and, for $k > 0$, let
\begin{align*}
\theta_i^k \coloneqq  \{ & c \in \ic(F_{i-1}) : \text{there exists a $j^\star \geq k$ with $T_i^{j^\star}(c) = \bullet$ and} \\ 
& \text{$T_i^{j^\star+1}(f) = \bullet$ for some $f \gtrdot c$ with $f \in \domain(F_{i-1})$}\}.
\end{align*}
Note that $\theta_i^1 = \theta_i$.

Set
$m \coloneqq U(\hat{0}_\mathcal{F}) = \min \range(F).$
For the remainder of this proof, we say $F_i^j$ and  $T_i^j$ {\bf N-agree} if $F_i^j$ and $T_i^j$ agree on all numeric labels within $\mathcal{F}$ greater than or equal to $m$; that is, for all $f \in \mathcal{F}$, if $T_i^j(f) \geq m$ or $F_i^j(f) \in \mathbb{Z}$, then $T_i^j(f) =F_i^j(f)$.
Additionally, for the remainder of this proof, we say that $F_i^j$ and $T_i^j$ {\bf agree} if they satisfy all the conditions:
\begin{enumerate}
\item[(A.0)] $F_i^j$ and $T_i^j$ N-agree;
\item[(A.1)] $F_i^j|_{\domain(F_i)} = T_i^j|_{\domain(F_i)}$;
\item[(A.2)] for all $c \in \ic (F_i)$, $F_i^j(c) = \bullet$ if and only if $c \in \theta_{i+1}^j$.
\end{enumerate}

We now establish inductively that $F_i^j$ and $T_i^j$ agree for all $i$ and $j$. 

First, note that, since $F_0^0 = F = (T \rightarrow U)|_\mathcal{F}$ is by definition a restriction of $T = T_0^0$ to the subposet of $\mathcal{F}$ consisting of those $x \in \mathcal{F}$ with $T_0^0(x) \geq m$, $F_0^0$ and $T_0^0$ satisfy (A.0) and (A.1). Furthermore, $\theta_1^0 = \emptyset$ and $F_0^0$ has no $\bullet$s, which proves (A.2).

Now, inductively assume that $F_\ell^h$ and $T_\ell^h$ agree for all $\ell \leq i$ and $h \leq j$.
Let 
\[M  = \max \range (T|_\mathcal{F}) = \max \range (F).\]

\medskip
\noindent
{\sf (Case 1: $j =M+1$):}
Since the largest label in $F$ and $T|_\mathcal{F}$ is $M$, our inductive step is to show that $F_{i+1}^0$ and $T_{i+1}^0$ agree.
We have $F_{i+1}^0 = \mathsf{RemoveDots}(F_i^{M+1})$ and $T_{i+1}^0 = \mathsf{RemoveDots}(T_i^{M+1})$. Hence, since $F_i^M$ and $T_i^M$ satisfy (A.0), it is clear that $F_{i+1}^0$ and $T_{i+1}^0$ satisfy (A.0) since removing $\bullet$s does not change numeric values.
Since $F_{i+1}^0$ and $T_{i+1}^0$ satisfy (A.0) and $F_{i+1}^0$ has no $\bullet$s, we have that $F_{i+1}^0$ and $T_{i+1}^0$ satisfy (A.1). Furthermore, by definition $\theta_{i+1}^0 = \emptyset$ and $F_{i+1}^0$ has no $\bullet$s, proving (A.2).

\bigskip
\noindent
{\sf (Case 2: $j =0$):}
In this case, $F_{i}^1 = \mathsf{AddDots}_{\theta_{i+1}}(F_i^0)$ and $T_{i}^1 = \mathsf{AddDots}_{\gamma_{i+1}}(T_i^0)$. (We note that, by definition, $\theta_{i+1}$ is a subset of $F_i$'s inner corners, so $\mathsf{AddDots}_{\theta_{i+1}}(F_i^0)$ is valid.)
To show that $F_{i}^1$ and $T_{i}^1$ agree, we must verify (A.0)--(A.2).

{\bf Proof of (A.0):}
Since $F_i^0$ and $T_i^0$ N-agree by assumption and $\mathsf{AddDots}_{\theta_{i+1}}$ does not affect numerical labels, it is clear that $F_{i}^1$ and $T_{i}^1$ N-agree. 

{\bf Proof of (A.1):}
Let $f \in \domain(F_i)$. Then $F_i(f) \in \mathbb{Z}$, so
\[F_i^1(f) = F_i(f) = T_i(f) = F_i^1 (f) \in \mathbb{Z},\]
where the first and last equalities are by adding $\bullet$s not affecting numeric labels and the middle equality is by (A.0) for $F_i^0$ and $T_i^0$.

{\bf Proof of (A.2):} Since $F_i^1 = \mathsf{AddDots}_{\theta_{i+1}}(F_i)$, we have that $F_i(c) = \bullet$ if and only if $c \in \theta_{i+1} = \theta_{i+1}^1$.

\bigskip
\noindent
{\sf (Case 3: $0 < j < M+1$):}
We must verify (A.0)--(A.2) for $F_i^{j+1}$ and $T_i^{j+1}$. First, we prove some helpful claims.

\begin{claim}
\label{claim F bullet implies T bullet or large covers}
For all $c \in \ic(F_i)$, if $F_i^j(c) = \bullet$, then either $T_i^j(c) = \bullet$ or else, for all $f \in \domain(F_i^j)$ with $c \lessdot f$, we have $F_i^j(f) = T_i^j(f) > j$;
\end{claim}
\begin{proof}[Proof of Claim~\ref{claim F bullet implies T bullet or large covers}]
Let $c \in \ic(F_i)$ and suppose $F_i^j(c) = \bullet$. 
Then, by (A.2), $f \in \theta_{i+1}^j$, 
so there exists a $j^\star \geq j$ such that $T_i^{j^\star}(c) = \bullet$ and 
$T_i^{j^\star+1}(f_0) = \bullet$. There are two cases to consider: either $j^\star = j$ or $j^\star > j$. 

First, suppose $j^\star = j$. Then $T_i^j(c) = T_i^{j^\star}(c) = \bullet$, as desired.

Otherwise, suppose $j^\star > j$. Since $T_i^{j^\star}(c) = \bullet$, we have that for all $c' \in \mathcal{P}$ with $c' \gtrdot c$ that $T_i^j(c') \geq j^\star > j$. Thus, by the inductive (A.1), we have for all $f \in \domain(F_i)$ with $f \gtrdot c$ that $F_i^j(f) = T_i^j(f) > j$.
\end{proof}

\begin{claim}
\label{claim ic or domain}
Suppose $p \in \mathcal{P} \setminus \left(  \domain(F_i) \cup \ic(F_i) \right)$. Further suppose $p \lessdot f$ with $f \in \mathcal{F}$ and $F_i^j(f) = j$. Then, $F_i^j(p) \neq \bullet$ and $T_i^j(p) \neq \bullet$.
\end{claim}
\begin{proof}[Proof of Claim~\ref{claim ic or domain}]
Since $p \notin \domain(F_i) \cup \ic(F_i)$, we have $p \notin \domain(F_i^j)$, and so it is not the case that $F_i^j(p) = \bullet$. 

Suppose $T_i^j(p) = \bullet$. 
We will obtain a contradiction by deriving that $p \in \ic(F_i)$. Suppose $p' \gtrdot p$ and $p' \notin \domain(F_i)$. 

If $p' \in \domain(T_i^j)$, then since $T_i^j(p) = \bullet$, we have that $T_i^j(p') \geq j$ (since it above a $\bullet$ after the $(j-1)$st swap), and we know $j \geq m$ since $F_i^j(f) = j$ and $m$ is the least label in $F$.
Hence, by the inductive (A.1), we have $T_i^j(p') = F_i^j(p') \geq j$. Therefore, $F_i^j(p') = F_i(p')$, so $p' \in \domain(F_i)$, in violation of our assumption.
Thus, $p' \notin \domain(T_i^j)$.
 
Let $p'' > p'$. 
Because the domain of $T_i^j$ is a skew shape and $p' \notin \domain (T_i^j)$, we know that $p'' \notin \domain(T_i^j)$.
Thus, since $\domain(F_i) \subseteq \domain(T_i) \subseteq \domain(T_i^j)$, we have $p'' \notin \domain(F_i)$.
Thus, for any $p' \gtrdot p$ with $p' \notin \domain(F_i)$, we then have, for all $p'' > p'$, that $p'' \notin \domain(F_i)$. Thus, $p \in \ic(F_i)$, which is our desired contradiction.
\end{proof}

{\bf Proof of (A.0)}:
By the inductive (A.0), $F_i^j$ and $T_i^j$ agree on numeric labels greater than or equal to $m$. Since $F_i^{j+1}$ and $T_i^{j+1}$ are just $F_i^{j}$ and $T_i^{j}$ after applying $\mathsf{Swap}_{\bullet, j}$, it suffices to consider the movement of the $j$ labels. If $j< m$, we are done. Hence, assume $j \geq m$. We know by (A.0) that for all $f \in \mathcal{F}$, that $T_i^j(f) = j$ if and only if $F_i^j(f)=j$. Hence, it suffices to show that whenever $f \in \mathcal{F}$ with $F_i^j(f) =T_i^j(f) = j$ and whenever $\hat{f} \lessdot f$, we have $F_i^j(\hat{f}) = \bullet$ if and only if $T_i^j(\hat{f}) = \bullet$.
Fix $f \in \mathcal{F}$ and $\hat{f} \lessdot f$. Suppose $T_i^j(f) = F_i^j(f) = j$. We must show $F_i^j(\hat{f}) = \bullet$ if and only if $F_i^j(\hat{f}) = \bullet$. Either $\hat{f} \in \domain(F_i)$, $\hat{f} \in \ic(F_i)$, or $\hat{f} \notin \domain(F_i) \cup \ic(F_i)$.

Suppose $\hat{f} \in \domain(F_i)$. Then, the inductive (A.1) directly gives that $F_i^j(\hat{f}) = \bullet$ if and only if $T_i^j(\hat{f}) = \bullet$, as desired.

Suppose $\hat{f} \notin \domain(F_i) \cup \ic(F_i)$. Then, Claim~\ref{claim ic or domain} gives that $F_i^j(\hat{f}) \neq \bullet$ and $T_i^j(\hat{f}) \neq \bullet$.

Finally, suppose $\hat{f} \in \ic(F_i)$. If $F_i^j(\hat{f}) = \bullet$, then Claim~\ref{claim F bullet implies T bullet or large covers} gives that $T_i^j(\hat{f}) = \bullet$. Conversely, if $T_i^j(\hat{f}) = \bullet$, then $T_i^{j+1}(f) = \bullet$ by the definition of swapping, so by definition $\hat{f} \in \theta_{i+1}^j$. Hence, $F_i^j(\hat{f}) = \bullet$ by the inductive (A.2).

{\bf Proof of (A.1)}:
Let $f \in \domain(F_i)$. Then by the inductive (A.0), we have that $f \in \domain(T_i)$, so either $T_i^{j+1}(f) \in \mathbb{Z}$ or $T_i^{j+1}(f) = \bullet$.

If $T_i^{j+1}(f) \in \mathbb{Z}$, then by construction of the swapping process $T_i^{j+1}(f) \geq T_i(f)$. Moreover, $T_i(f) = F_i(f) \geq m$ by the inductive (A.0). Hence, $T_i^{j+1}(f) \geq m$. Thus, we have $F_i^{j+1}(f) = T_i^{j+1}(f)$ by (A.0) for $F_i^{j+1}$ and $T_i^{j+1}$ (which has already been established at this point). 

Now, suppose that $T_i^{j+1}(f) = \bullet$. If $F_i^{j+1}(f) \neq \bullet$, then $F_i^{j+1}(f)  \geq m$. Hence, by (A.0) for $F_i^{j+1}$ and $T_i^{j+1}$ (which has already been established at this point), we have 
\[\bullet \neq F_i^{j+1}(f) = T_i^{j+1}(f) = \bullet, \] which is a contradiction. Thus, $F_i^{j+1}(f) = \bullet$.

{\bf Proof of (A.2)}: 
We have 
\[
\{f \in \ic(F_i) : F_i^{j+1}(f) = \bullet \} = \{f \in \ic(F_i): F_i^j(f) = \bullet \text{ and $F_i^j(f') \neq j$ for all $f' \gtrdot f$}\}.
\]
By the inductive (A.2), this equals
\[
\{f \in \theta_i^j: \text{$F_i^j(f') \neq j$ for all $f' \gtrdot f$}\},
\]
which in turn equals 
\[
\{f \in \theta_i^j: \text{$T_i^j(f') \neq j$ for all $f' \in \domain(F_i)$ with $f' \gtrdot f$}\}
\]
by the inductive (A.0).
As this last set is the definition of $\theta_i^{j+1}$,
this completes the proof of (A.2) and hence the induction.

As a consequence of our induction, we have $F_n^0 = T_n^0|_{\mathcal{F}} = U|_\mathcal{F}$. Hence, 
\[
F \xrightarrow{\theta_1, \dots, \theta_n} U|_\mathcal{F},
\]
as  desired. 
\end{proof}

We now derive a few straightforward corollaries of Proposition~\ref{Theorem corresponding respects slides}. We will not use these corollaries in the sequel; however, they seem interesting to us for elucidating some of the structure of URTs among collections of related posets.

\begin{corollary}
\label{Corollary agree on filters with minimums}
Let $T$ be a skew increasing $\mathcal{P}$-tableau and let $\mathcal{F}$ be a funnel of $\mathcal{P}$.
Let $U, V$ be rectifications of $T$. 
If $U|_\mathcal{F}$ is a URT in $\mathcal{F}$ and $U(\hat{0}_\mathcal{F}) = V(\hat{0}_\mathcal{F})$, then $U|_\mathcal{F} = V|_\mathcal{F}$.
\end{corollary}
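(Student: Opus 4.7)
The plan is to invoke Proposition~\ref{Theorem corresponding respects slides} separately for $U$ and $V$, observe that the two resulting corresponding tableaux actually coincide, and then exploit the URT property of $U|_\mathcal{F}$ to force $V|_\mathcal{F} = U|_\mathcal{F}$.

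First I would apply Proposition~\ref{Theorem corresponding respects slides} to $U$ and to $V$ separately, obtaining that $U|_\mathcal{F}$ is a rectification of $(T \rightarrow U)|_\mathcal{F}$ in $\mathcal{F}$, and that $V|_\mathcal{F}$ is a rectification of $(T \rightarrow V)|_\mathcal{F}$ in $\mathcal{F}$. (Note that for these to make sense as rectifications in $\mathcal{F}$, one needs $U|_\mathcal{F}$ and $V|_\mathcal{F}$ to be of straight shape in $\mathcal{F}$, which is immediate since the intersection of an order ideal of $\mathcal{P}$ with the order filter $\mathcal{F}$ is an order ideal of $\mathcal{F}$.)

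The next step is to inspect Definition~\ref{definition corresponding} and observe that, aside from the tableau $T$ itself, the corresponding tableau depends on the rectification only through the single numeric value at $\hat{0}_\mathcal{F}$: in the case $\hat{0}_\mathcal{F} \in \nu/\lambda$ this value serves as the threshold cutoff, while in the case $\hat{0}_\mathcal{F} \notin \nu/\lambda$ (which is intrinsic to $T$) the corresponding tableau is empty regardless of the rectification. The hypothesis $U(\hat{0}_\mathcal{F}) = V(\hat{0}_\mathcal{F})$ therefore yields $(T \rightarrow U)|_\mathcal{F} = (T \rightarrow V)|_\mathcal{F}$.

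Finally, I would combine these facts with the URT hypothesis on $U|_\mathcal{F}$. Since $U|_\mathcal{F}$ is a URT in $\mathcal{F}$ and the skew tableau $(T \rightarrow U)|_\mathcal{F}$ rectifies to $U|_\mathcal{F}$, the definition of URT tells us that $(T \rightarrow U)|_\mathcal{F}$ rectifies uniquely to $U|_\mathcal{F}$. But $V|_\mathcal{F}$ is also a rectification of the very same skew tableau $(T \rightarrow V)|_\mathcal{F} = (T \rightarrow U)|_\mathcal{F}$, forcing $V|_\mathcal{F} = U|_\mathcal{F}$. There is no substantive obstacle in this argument; it is essentially a bookkeeping reduction to Proposition~\ref{Theorem corresponding respects slides}, with the only mild subtlety being to confirm that the corresponding-tableau construction depends on the rectification only via $U(\hat{0}_\mathcal{F})$, which is immediate from the definition.
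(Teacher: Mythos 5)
Your proposal is correct and follows essentially the same route as the paper's proof: note that $(T \rightarrow U)|_\mathcal{F} = (T \rightarrow V)|_\mathcal{F}$ since the corresponding tableau depends on the rectification only through the value at $\hat{0}_\mathcal{F}$, apply Proposition~\ref{Theorem corresponding respects slides} to get that both $U|_\mathcal{F}$ and $V|_\mathcal{F}$ rectify from this common tableau, and then invoke the URT property. The extra sanity-check you include (that the restrictions land in straight shape in $\mathcal{F}$) is a correct, if routine, point that the paper leaves implicit.
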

\begin{proof}
By definition, $(T \rightarrow U)|_\mathcal{F}$ = $(T \rightarrow V)|_\mathcal{F}$, since $U(\hat{0}_\mathcal{F}) = V(\hat{0}_\mathcal{F})$. 
By Proposition~\ref{Theorem corresponding respects slides}, $(T \rightarrow U)|_\mathcal{F}$ rectifies to $U|_\mathcal{F}$ and $(T \rightarrow V)|_\mathcal{F}$ rectifies to $V|_\mathcal{F}$. 
Since by assumption $U|_\mathcal{F}$ is a URT in $\mathcal{F}$, it follows that $U|_\mathcal{F} = V|_\mathcal{F}$.  
\end{proof}

\begin{definition}
The \textbf{bottom chain} $\mathcal{C}$ of a poset $\mathcal{P}$ is the order ideal of $\mathcal{P}$ constructed as follows.
Define
\[\min(\mathcal{P}) \coloneqq 
     	\begin{cases} 
            \{ \hat{0}_\mathcal{P} \}, & \text{ if $\mathcal{P}$ has a $\hat{0}_\mathcal{P}$}; \\
            \emptyset, & \text{ otherwise}. 
        \end{cases}
\]
We construct the shapes $C_i$ recursively.
Let $C_0 \coloneqq \min(\mathcal{P})$ and let $C_{i+1} \coloneqq C_i \cup \min(\mathcal{P} \setminus C_i)$.
Finally, the bottom chain of $\mathcal{P}$ is \[\mathcal{C} \coloneqq \bigcup_{n \in \mathbb{Z}_{\geq 0}} C_n.\]
\end{definition}

\begin{lemma}
\label{Lemma unique rectification in bottom chain}
Let $\mathcal{C}$ be the bottom chain of a poset $\mathcal{P}$. Let $U,V$ be rectifications of a skew increasing $\mathcal{P}$-tableau $T$. Then $U|_\mathcal{C} = V|_\mathcal{C}$.
\end{lemma}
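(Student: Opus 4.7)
The plan is to induct on the chain position $i$ to prove $U(x_i) = V(x_i)$ (with matching defined-ness), where the bottom chain is $\mathcal{C} = \{x_0 < x_1 < \cdots\}$. The argument rests on two structural facts about any rectification $W$ of $T$. First, $\range(W) = \range(T)$ as sets: no swap introduces a new label, and any $n$ converted to $\bullet$ during $\mathsf{Swap}_{\bullet,n}$ forces an adjacent $\bullet$-position below it to become $n$ (that $\bullet$'s upward cover then holds the value $n$), ruling out annihilation. Second, for each chain element $x_j$ with $j < |\mathcal{C}|-1$, the value $W(x_j)$ appears in $W$ only at the single position $x_j$: every element of $\mathcal{P}$ is comparable to $x_j$, lying either in $C_j$ (along the chain at or below $x_j$) or in the principal filter $\mathcal{F}_{j+1} := \mathcal{P} \setminus C_j$ whose unique minimum $x_{j+1}$ satisfies $x_{j+1} > x_j$, so strict monotonicity of $W$ along comparable pairs forbids repetition of $W(x_j)$ elsewhere.

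The base case $i = 0$ is immediate: assuming $T$ is nonempty (else the statement is vacuous), $\hat{0}_\mathcal{P} = x_0$ lies in $\domain(U) \cap \domain(V)$ as the minimum of any nonempty order ideal, and $U(x_0) = \min\range(U) = \min\range(T) = V(x_0)$ by the first fact. For the inductive step at $i \geq 1$, write $\alpha_j := U(x_j) = V(x_j)$ for $j < i$. Writing $\mathcal{F}_i := \mathcal{P} \setminus C_{i-1}$, the second fact yields
\[
\range(U|_{\mathcal{F}_i}) = \range(T) \setminus \{\alpha_0, \ldots, \alpha_{i-1}\} = \range(V|_{\mathcal{F}_i}),
\]
using the first fact for the outer equalities. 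Since $x_i$ is the minimum of $\mathcal{F}_i$ by the chain construction, $x_i \in \domain(U)$ iff this common set is nonempty iff $x_i \in \domain(V)$; when nonempty, both $U(x_i)$ and $V(x_i)$ equal its minimum.

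The main technical obstacle is the first structural fact—carefully tracking how the count of each label transforms through a $\mathsf{Swap}_{\bullet,n}$ in the $K$-theoretic setting, where labels can merge (several $n$s above a single $\bullet$ collapse to one) or duplicate (a single $n$ below several $\bullet$s splits). One must verify that the rescue mechanism—any $n$ becoming $\bullet$ forces at least one $\bullet$ below it to become $n$—prevents the count from ever dropping to zero. The remainder of the argument is combinatorial bookkeeping.
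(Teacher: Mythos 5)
Your proof is correct and takes essentially the same approach as the paper's one-sentence justification: the labels along the bottom chain are forced to be the smallest elements of $\range(T)$, in increasing order, because each $x_j$ is comparable to every element of $\mathcal{P}$ (being sandwiched between the chain ideal $C_j$ below it and the order filter with unique minimum $x_{j+1}$ above). The paper treats as self-evident the two facts you isolate — that a $K$-theoretic slide preserves the \emph{set} of labels and that bottom-chain comparability forces the values — and your write-up simply makes the terse argument rigorous, including the range-preservation step you correctly flag as the main thing requiring care.
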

\begin{proof}
In any rectification $W$ of $T$, the labels of $W|_\mathcal{C}$ must be just the smallest numbers in the range of $T$ in increasing order.
\end{proof}

\begin{corollary}
\label{Corollary slant sum tree base case}
Let $\mathcal{P}_1, \dots, \mathcal{P}_n $ be pairwise disjoint posets with minimum elements $\hat{0}_{\mathcal{P}_k}$. 
Let $\mathcal{C} = \{ c \}$ be a chain of size 1.
Construct the slant sum $\mathcal{R} \coloneqq \mathcal{C} \slantsum{c} (\mathcal{P}_1, \dots ,\mathcal{P}_n)$. For $U$ a straight-shaped increasing $\mathcal{R}$-tableau, if $U|_{\mathcal{P}_k}$ is an URT in $\mathcal{P}_k$ for each $1\leq k \leq n$, then $U$ is a URT in $\mathcal{R}$.
\end{corollary}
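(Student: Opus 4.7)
The strategy is to show that any two rectifications of a common skew $\mathcal{R}$-tableau must coincide. Let $T$ be a skew $\mathcal{R}$-tableau rectifying to $U$, and let $V$ be any other rectification of $T$; I plan to prove $V = U$ by separately establishing $V(c) = U(c)$ and $V|_{\mathcal{P}_k} = U|_{\mathcal{P}_k}$ for every $k$.

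First, I would verify that each $\mathcal{P}_k$ is a funnel of $\mathcal{R}$: it is an order filter of $\mathcal{R}$ (since the slant sum attaches $\mathcal{P}_k$ to $\mathcal{R}$ only through the single cover $c \lessdot \hat{0}_{\mathcal{P}_k}$), it has minimum $\hat{0}_{\mathcal{P}_k}$, and the only element of $\mathcal{R} \setminus \mathcal{P}_k$ that lies below any element of $\mathcal{P}_k$ is $c$, which does lie below $\hat{0}_{\mathcal{P}_k}$. Since $\{c\}$ is contained in (and, when $n \geq 2$, equals) the bottom chain of $\mathcal{R}$, Lemma~\ref{Lemma unique rectification in bottom chain} yields $V(c) = U(c)$. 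To then conclude $V|_{\mathcal{P}_k} = U|_{\mathcal{P}_k}$, I would apply Corollary~\ref{Corollary agree on filters with minimums} with $\mathcal{F} = \mathcal{P}_k$, using the given URT hypothesis on $U|_{\mathcal{P}_k}$; the only remaining input required is the equality $V(\hat{0}_{\mathcal{P}_k}) = U(\hat{0}_{\mathcal{P}_k})$ (whenever $\hat{0}_{\mathcal{P}_k}$ lies in the support of both tableaux).

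This last equality is the main obstacle. The plan is to exploit the tree-like structure of the slant sum: distinct $\mathcal{P}_k$'s communicate in $\mathcal{R}$ only through $c$, and K-theoretic slides transport labels only downward along cover relations, so a label originating at a position of $\mathcal{P}_k$ must in any rectification end up either within $\mathcal{P}_k$ or at $c$---never in some $\mathcal{P}_j$ with $j \neq k$. Combined with the fact (easily verified directly from the swap rules) that K-theoretic slides preserve the set of distinct integer labels appearing in the tableau, this implies that for any rectification $W$ of $T$, the set of distinct values of $W|_{\mathcal{P}_k}$ equals $\{v \in \range(T|_{\mathcal{P}_k}) : v > W(c)\}$: the value $W(c)$ may be absorbed into $c$ by the terminal $\{c\}$-slide when that slide occurs, but strictly larger labels cannot. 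Hence $W(\hat{0}_{\mathcal{P}_k})$---the minimum value of $W|_{\mathcal{P}_k}$---is the smallest distinct value of $T|_{\mathcal{P}_k}$ exceeding $W(c)$, a quantity determined entirely by $T$ once we have $W(c) = U(c) = V(c)$ in hand. This yields $V(\hat{0}_{\mathcal{P}_k}) = U(\hat{0}_{\mathcal{P}_k})$ and completes the argument.
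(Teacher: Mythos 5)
Your proposal is correct and follows essentially the same route as the paper's proof: use the bottom-chain lemma to pin down the label at $c$, argue that each rectification must place $\min(\range(T|_{\mathcal{P}_k}) \setminus \{U(c)\})$ at $\hat{0}_{\mathcal{P}_k}$, and then invoke Corollary~\ref{Corollary agree on filters with minimums} on the funnel $\mathcal{P}_k$. The paper dispatches the middle step with an ``it is easy to see'' appeal to increasingness and the funnel structure, whereas you spell out the label-migration and label-set-preservation reasoning a bit more explicitly, but the structure and key lemmas invoked are identical.
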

\begin{proof}
Suppose $U,V$ are rectifications of some skew increasing $\mathcal{R}$-tableau $T$. 
Since $c$ is in the bottom chain of $\mathcal{R}$, $U|_\mathcal{C} = V|_\mathcal{C}$ by Lemma~\ref{Lemma unique rectification in bottom chain}.
Let $m \coloneqq U(c) = V(c)$.
It remains to show that $U|_{P_k} = V|_{P_k}$ for all $k$. Fix some $\mathcal{P}_k$. By the increasingness of $U$ and $V$, we have $U(\hat{0}_{\mathcal{P}_k}) > m$ and $V(\hat{0}_{\mathcal{P}_k}) > m$, but both $U(\hat{0}_{\mathcal{P}_k})$ and $V(\hat{0}_{\mathcal{P}_k})$ must also be less than all the other labels in their respective tableaux for elements in $\mathcal{P}_k$ that are greater than $\hat{0}_{\mathcal{P}_k}$. Thus, since $\hat{0}_{\mathcal{P}_k}$ only covers $c$ and all the $\mathcal{P}_k$ are disjoint funnels, it is easy to see that
\[U(\hat{0}_{\mathcal{P}_k}) = V(\hat{0}_{\mathcal{P}_k}) = \min (\range(T|_{\mathcal{P}_k}) \setminus \{m\}).\]
Finally, since $U(\hat{0}_{\mathcal{P}_k}) = V(\hat{0}_{\mathcal{P}_k})$, we have $U|_{\mathcal{P}_k} = V|_{\mathcal{P}_k}$ by applying Corollary~\ref{Corollary agree on filters with minimums}.
\end{proof}

We will say a poset $\mathscr{T}$ is a \textbf{tree} if $\mathscr{T}$ has a $\hat{0}_\mathscr{T}$ and if each other $x \in \mathscr{T}$ has exactly one parent. 

\begin{corollary}
Let $\mathscr{T}$ be a tree. Let $t_1, \dots, t_n$ be (not necessarily distinct) elements of $\mathscr{T}$. Let $\mathcal{P}_1, \ldots, \mathcal{P}_n$ be pairwise disjoint posets with minimum elements.
Define
\[\mathcal{R}_0 \coloneqq \mathscr{T}\]
and
\[\text{ $\mathcal{R}_{i+1} = \mathcal{R}_i \slantsum{t_{i+1}} \mathcal{P}_{i+1}$ for $i = 0, \dots, n-1$}.\]
Let $U$ be a straight-shaped increasing $\mathcal{R}_n$-tableau.
If $U|_{\mathcal{P}_k}$ is an URT in $\mathcal{P}_k$ for all $1\leq k \leq n$, then $U$ is a URT in $\mathcal{R}_n$.
\end{corollary}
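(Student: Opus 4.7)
The plan is to proceed by strong induction on $|\mathscr{T}|$. The base case $|\mathscr{T}| = 1$ is exactly Corollary~\ref{Corollary slant sum tree base case}, since a one-element tree is itself a chain of size $1$.

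For the inductive step, I would decompose $\mathscr{T}$ at its minimum. Let $c_1, \ldots, c_r$ be the children of $\hat{0}_\mathscr{T}$ in $\mathscr{T}$, and for each $j$ set $\mathscr{T}_j \coloneqq \{x \in \mathscr{T} : x \geq c_j\}$, so that $\mathscr{T}_j$ is itself a tree with $\hat{0}_{\mathscr{T}_j} = c_j$ and strictly fewer elements than $\mathscr{T}$. Partition $\{1, \ldots, n\}$ as $I_0 \sqcup I_1 \sqcup \cdots \sqcup I_r$, where $I_0 = \{i : t_i = \hat{0}_\mathscr{T}\}$ and $I_j = \{i : t_i \in \mathscr{T}_j\}$ for $j \geq 1$; the tree structure of $\mathscr{T}$ guarantees this is a well-defined partition. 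Enumerate $I_0 = \{i_1, \ldots, i_s\}$. For each $j \geq 1$, let $\widetilde{\mathscr{T}}_j$ be obtained from $\mathscr{T}_j$ by iteratively slant-summing the posets $\mathcal{P}_i$ (for $i \in I_j$) at the corresponding $t_i \in \mathscr{T}_j$, in the order inherited from the original construction of $\mathcal{R}_n$.

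Since slant sums at distinct attachment points commute, the iterated slant sum defining $\mathcal{R}_n$ can be regrouped as
\[\mathcal{R}_n \;=\; \{\hat{0}_\mathscr{T}\} \slantsum{\hat{0}_\mathscr{T}} \bigl(\mathcal{P}_{i_1}, \ldots, \mathcal{P}_{i_s}, \widetilde{\mathscr{T}}_1, \ldots, \widetilde{\mathscr{T}}_r\bigr).\]
Applying the inductive hypothesis to each smaller tree $\mathscr{T}_j$ (using the posets $\mathcal{P}_i$ for $i \in I_j$, which satisfy the URT assumption by hypothesis) shows that $U|_{\widetilde{\mathscr{T}}_j}$ is a URT in $\widetilde{\mathscr{T}}_j$. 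Combined with the given URT hypothesis on each $U|_{\mathcal{P}_{i_\ell}}$ for $i_\ell \in I_0$, Corollary~\ref{Corollary slant sum tree base case} applied to the displayed decomposition immediately yields that $U$ is a URT in $\mathcal{R}_n$.

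The technical weight of the argument has already been carried by Corollary~\ref{Corollary slant sum tree base case}, which in turn rests on Proposition~\ref{Theorem corresponding respects slides}. The only genuinely new content is the bookkeeping that reorganizes an arbitrary tree of iterated slant sums into a single slant sum at $\hat{0}_\mathscr{T}$ of smaller pieces, each of which is a URT either by the inductive hypothesis or by the given assumption. I do not foresee a serious obstacle; the only care required is in verifying the displayed decomposition when several $t_i$ coincide or equal $\hat{0}_\mathscr{T}$, which follows immediately from the fact that slant sums at distinct nodes commute.
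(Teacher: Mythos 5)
Your proposal is correct and fills in exactly the details that the paper's one-line proof (``By repeated application of Corollary~\ref{Corollary slant sum tree base case}'') omits. Decomposing $\mathscr{T}$ at $\hat{0}_\mathscr{T}$ into principal filters over the children, regrouping the slant sums accordingly, and inducting on $|\mathscr{T}|$ is the natural way to make ``repeated application'' precise; note only that the inductive hypothesis is used with $n=0$ for any subtree $\mathscr{T}_j$ that receives no slant-summed posets, which is harmless since the base case Corollary~\ref{Corollary slant sum tree base case} with $n=0$ is trivially true.
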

\begin{proof}
By repeated application of Corollary~\ref{Corollary slant sum tree base case}.
\end{proof}

\section{Restricted rectifications and $p$-chain URTs}
\label{Section slant sum}
Most of the results in this section are fairly technical lemmas that we will need later.

\begin{definition}
Let $\mathcal{P}$ be a poset, let $\mathcal{Q}$ be a subset of $\mathcal{P}$, and let $T$ be a skew increasing $\mathcal{P}$-tableau.
Then, the \textbf{rectifications of $T$ restricted to $\mathcal{Q}$} are the elements of the set
\[\mathsf{rects}|_\mathcal{Q} (T) := \{U|_\mathcal{Q} : U \in \mathsf{rects}(T) \}.\]
\end{definition}

\begin{lemma}\label{Lemma poset "sees" chain}
\gap 
\begin{enumerate}
\item Fix $p \in \mathcal{P}$ and let $k \coloneqq |\{x \in \mathcal{P}: x \leq p\}|$ be the cardinality of the principal order ideal generated by $p$. Let $\mathcal{C}$ be a chain poset of size $k$. If 
\[\mathcal{R} = \mathcal{P} \slantsum{p} (\mathcal{Q}_1, \dots, \mathcal{Q}_n)\] for pairwise disjoint posets $\mathcal{Q}_1, \dots, \mathcal{Q}_n$ with minimum elements
and $T$ is a skew increasing $\mathcal{R}$-tableau, then there is a skew increasing $(\mathcal{P} \slantsum{p} \mathcal{C})$-tableau $C$ such that
\[\mathsf{rects}|_\mathcal {P} (T) = \mathsf{rects}|_\mathcal{P} (C).\]
\item More generally, if $p_1,..,p_n \in \mathcal{P}$ are distinct, define $k_i \coloneqq |\{x \in \mathcal{P}: x \leq p_i\}|$ and let $\mathcal{C}_i$ be the chain poset of size $k_i$. Then, if 
\[\mathcal{R} = \mathcal{P} \slantsum{p_1} (\mathcal{Q}_1^1, \dots ,\mathcal{Q}_{m_1}^1) \slantsum{p_2} \dots \slantsum{p_n} (\mathcal{Q}_1^n,\dots, \mathcal{Q}^n_{m_n})\] for pairwise disjoint posets $\mathcal{Q}^1_1, \dots , \mathcal{Q}^n_{m_n}$ with minimum elements
and $T$ is a skew increasing $\mathcal{R}$-tableau, then there is a skew increasing $(\mathcal{P} \slantsum{p_1} \mathcal{C}_1 \slantsum{p_2}\dots\slantsum{p_n} \mathcal{C}_n)$-tableau $C$ such that
\[\mathsf{rects}|_\mathcal {P} (T) = \mathsf{rects}|_\mathcal{P} (C).\]
\end{enumerate}
\end{lemma}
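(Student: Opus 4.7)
My plan is to prove part (1) carefully and then to derive part (2) by iteration on the attachment points. For part (1), the guiding intuition is that $\mathsf{rects}|_\mathcal{P}(T)$ depends on $T|_{\bigcup_i \mathcal{Q}_i}$ only through a controlled, chain-like piece of data, which we will encode as an increasing tableau on the chain $\mathcal{C}$ of size $k$ attached at $p$.

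The first step is to establish a commutativity principle: if $\gamma \subseteq \ic(T)$ is contained in a single component $\mathcal{Q}_i$, then $\mathsf{Slide}_\gamma$ does not touch $T|_\mathcal{P}$ and moreover $\mathsf{rects}|_\mathcal{P}(\mathsf{Slide}_\gamma(T)) = \mathsf{rects}|_\mathcal{P}(T)$. The reason is that the swap process in such a slide only involves elements weakly above $\hat{0}_{\mathcal{Q}_i}$, and any rectification sequence for $T$ can be reordered so that all $\mathcal{Q}_i$-interior slides are performed first; slides whose supports are separated by the attachment point $p$ do not influence each other's outcomes. Proposition~\ref{Theorem corresponding respects slides} is the natural tool here, since each $\mathcal{Q}_i$ forms a funnel in $\mathcal{R}$.

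Iterating $\mathcal{Q}_i$-interior slides terminates in finitely many steps, yielding a tableau $T'$ with $\mathsf{rects}|_\mathcal{P}(T') = \mathsf{rects}|_\mathcal{P}(T)$ and with each $T'|_{\mathcal{Q}_i}$ of straight shape within $\mathcal{Q}_i$. Now observe that any rectification $U$ of $T'$ places strictly increasing integer values on the $k$-element principal order ideal $\mathcal{P}_{\leq p}$, so at most $k$ distinct integer values from $\bigcup_i \mathcal{Q}_i$ can ever appear in $\mathcal{P}_{\leq p}$ after rectification, and by strict increasingness these are forced to be the $k$ smallest labels from $\bigcup_i \mathcal{Q}_i$ that can migrate across $p$. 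Denoting these relevant labels by $\ell_1 < \ell_2 < \cdots < \ell_s$ (with $s \leq k$) arranged in the forced order in which they emerge from the funnel, we define $C$ by setting $C|_\mathcal{P} = T|_\mathcal{P}$, placing $\ell_1, \ldots, \ell_s$ on the bottom $s$ cells of the chain $\mathcal{C}$ above $p$, and leaving the remaining $k - s$ chain cells skewed out.

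The final verification is a matching between rectifications modulo restriction to $\mathcal{P}$: each slide sequence in $\mathcal{R}$ that draws a label $\ell_j$ through $p$ can be mimicked by a slide in $\mathcal{P} \slantsum{p} \mathcal{C}$ that pulls the same $\ell_j$ off the bottom of the chain, and conversely, since in both cases the net effect on $\mathcal{P}$-cells is determined entirely by the ordered sequence of labels crossing $p$. The main obstacle I anticipate is the commutativity step, which must be executed carefully in the $K$-theoretic setting because $\mathsf{Slide}_\gamma$ permits labels to split and merge under $\mathsf{Swap}_{\bullet,n}$; one must ensure these branching behaviors remain independent across disjoint funnels. Granting part (1), part (2) follows by successive application, one attachment point $p_i$ at a time, since each replacement occurs in a separate funnel of the ambient poset and so leaves the chains introduced at previously processed $p_j$'s undisturbed.
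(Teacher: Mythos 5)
Your construction of the chain tableau $C$ matches the paper's in spirit—both identify the (at most) $k$ smallest labels sitting in $\bigcup_i \mathcal{Q}_i$ as the relevant data, and both place them in increasing order along a chain of size $k$ attached at $p$. However, your proposal has two genuine gaps, one of which you flag yourself and one that you gloss over.

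First, the ``commutativity principle'' (that any rectification sequence can be reordered to do all $\mathcal{Q}_i$-interior slides first, without affecting $\mathsf{rects}|_\mathcal{P}$) is asserted but not proven, and it is far from obvious in the $K$-theoretic setting, precisely because the set $\mathsf{rects}(T)$ records a whole set of rectifications arising from arbitrary choices of inner-corner subsets at each step. You correctly anticipate this as the obstacle, but the paper's proof in fact avoids needing such a general reordering lemma: for the forward containment $\mathsf{rects}|_\mathcal{P}(T)\subseteq\mathsf{rects}|_\mathcal{P}(C)$ the paper does not normalize $T$ at all (it works directly with $C$ defined from the raw $T|_{\bigcup_h\mathcal{Q}_h}$), and for the reverse containment it only performs one explicit normalization (clearing skewed-out nodes from $\bigcup_h\mathcal{Q}_h$), which is a legal prefix of a slide sequence rather than a reordering claim.

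Second, and more seriously, your final step---``the net effect on $\mathcal{P}$-cells is determined entirely by the ordered sequence of labels crossing $p$''---is essentially the whole content of the lemma and cannot just be asserted. The paper proves this with a careful induction tracking the intermediate dotted tableaux $T_i^j$ and $C_i^j$ swap-by-swap, establishing conditions (S.0) ($T_i^j|_\mathcal{P}=C_i^j|_\mathcal{P}$) and (S.1) (the sub-threshold labels in $\bigcup_h\mathcal{Q}_h$ and in $\mathcal{C}$ match) for every $i,j$. This is exactly where one has to confront the splitting/merging behavior of $\mathsf{Swap}_{\bullet,n}$ that you worry about. Without some inductive bookkeeping of this kind, the matching between rectifications is not established. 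So your proposal is a correct roadmap but stops short of a proof: it identifies the right target object $C$ but does not verify $\mathsf{rects}|_\mathcal{P}(T)=\mathsf{rects}|_\mathcal{P}(C)$.

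A small additional imprecision: you propose to ``leave the remaining $k-s$ chain cells skewed out,'' but since the chain is attached above $p$ and skewed-out cells form an order ideal, the unused top cells of the chain cannot be skewed out; they simply lie outside the outer shape $\nu$.

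Your derivation of part (2) from part (1) by processing the attachment points $p_1,\dots,p_n$ one at a time is the same induction the paper uses and is fine.
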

\begin{proof}
We first prove (1).
Since there are $k$ weak ancestors of $p$, there are at most $k$ distinct labels from $\bigcup_h \mathcal{Q}_h$ that can swap into $\mathcal{P}$ during rectification of the skew increasing $\mathcal{R}$-tableau $T$.
Let $q_1 < \dots < q_m \in \mathbb{Z}_{>0}$ be the $m$ smallest labels of elements in 
$T|_{\bigcup_{h}\mathcal{Q}_h},$
where $m$ is the lesser of $k$ and the number of distinct labels in $T|_{\bigcup_{h}\mathcal{Q}_h}$. 
Fix a chain $\mathcal{C} = \{ c_1 <\dots < c_k\}$ of size $k$. 
Define the skew increasing $\mathcal{P} \slantsum{p} \mathcal{C}$-tableau $C$ as follows:
\[C(x) = 
\begin{cases}
T(x), & x \in \mathcal{P} \cap \domain(T); \\
q_r, & x = c_r \text{ and } 1 \leq i \leq m.
\end{cases}
\]

We will show $\mathsf{rects}|_\mathcal {P} (T) = \mathsf{rects}|_\mathcal{P} (C)$. First, we show that $\mathsf{rects}|_\mathcal {P} (T) \subseteq \mathsf{rects}|_\mathcal{P} (C)$. To do this, suppose $T \xrightarrow{\gamma_1, \dots, \gamma_n} U$ in $\mathcal{R}$. We must show that $U|_\mathcal{P} \in  \mathsf{rects}|_\mathcal{P} (C)$. We use the sequence of inner corners $\theta_i \coloneqq \gamma_i \cap \mathcal{P}$ to rectify $C$.

Write $T_i^j$  to represent $T$ \emph{after} $i$ slides and just \emph{before} the $j$th swap; that is, 
\[T_i^j \coloneqq \begin{cases}
\mathsf{Swap}_{\bullet,j-1} \circ \cdots \circ \mathsf{Swap}_{\bullet,1} \circ \mathsf{AddDots}_{\gamma_{i+1}} \circ \mathsf{Slide}_{\gamma_1, \ldots, \gamma_{i}}(T), & \text{ if } j \geq 1;  \\
\mathsf{Slide}_{\gamma_1, \ldots, \gamma_{i}}(T), &\text{ if } j = 0.
\end{cases} \]

Similarly, let $C_i^j$ be 
\[C_i^j \coloneqq \begin{cases}
\mathsf{Swap}_{\bullet,j-1} \circ \cdots \circ \mathsf{Swap}_{\bullet,1} \circ \mathsf{AddDots}_{\theta_{i+1}} \circ \mathsf{Slide}_{\theta_1, \ldots, \theta_{i}}(C), & \text{ if } j \geq 1;  \\
\mathsf{Slide}_{\theta_1, \ldots, \theta_{i}}(C), &\text{ if } j = 0.
\end{cases} \]

For the rest of this proof, we say $T_i^j$ and $C_i^j$ are {\bf similar} if they satisfy both of the following conditions:
\begin{enumerate}
\item[(S.0)] $T_i^j|_\mathcal{P} = C_i^j|_\mathcal{P}$;
\item[(S.1)] $\{T_i^j(q) \in \mathbb{Z} : q \in \bigcup_h \mathcal{Q}_h \text{ and } T_i^j(q) \leq q_m\} = \{C_i^j(c) \in \mathbb{Z}: c \in \mathcal{C} \text{ and } C_i^j(c) \leq q_m\}$.
\end{enumerate}

We show by induction that $T_i^j$ and $C_i^j$ are similar for all $i$ and $j$. In particular, this will yield $T_i^j|_\mathcal{P} = C_i^j|_\mathcal{P}$, proving that $U|_\mathcal{P} = T_n^0|_\mathcal{P} = C_n^0|_\mathcal{P} \in \mathsf{rects}|_\mathcal{P}(C)$, as desired.

By construction, we have that $T_0^0|_\mathcal{P} = T|_\mathcal{P} = C|_\mathcal{P} = C_0^0|_\mathcal{P}$ so $T_0^0$ and $C_0^0$ satisfy (S.0). Condition (S.1) for  $T_0^0$ and $C_0^0$ is also by construction.

Now, inductively assume $T_i^j$ and $C_i^j$ are similar. For concision, write $\mathbb{S}_i^j$ for the set \[\{T_i^j(q) \in \mathbb{Z} : q \in \bigcup_h \mathcal{Q}_h \text{ and } T_i^j(q) \leq q_m\} = \{C_i^j(c) \in \mathbb{Z}: c \in \mathcal{C} \text{ and } C_i^j(c) \leq q_m\}\] considered in the inductive (S.1) condition.
Let 
\[M  = \max \range (T) \geq \max \range (C).\]

\medskip
\noindent
{\sf (Case 1: $j =M+1$):}
We have that
\begin{align*}
T_{i+1}^0|_\mathcal{P} &= \mathsf{RemoveDots}(T_{i}^{M+1})|_\mathcal{P}= \mathsf{RemoveDots}(T_{i}^{M+1}|_\mathcal{P}) \\
&=\mathsf{RemoveDots}(C_{i}^{M+1}|_\mathcal{P}) = \mathsf{RemoveDots}(C_{i}^{M+1})|_\mathcal{P} \\
&= C_{i+1}^0|_\mathcal{P},
\end{align*}
so $T_{i+1}^0$ and $C_{i+1}^0$ satisfy (S.0). 
Removing $\bullet$s does not affect the numerical labels in $\bigcup_h \mathcal{Q}_h$ or $\mathcal{C}$, so (S.1) for $T_{i+1}^0$ and $C_{i+1}^0$ is immediate from (S.1) for $T_{i}^{M+1}$ and $C_{i}^{M+1}$. This completes this case.

Before turning to the next case, note that if none of the weak ancestors of $p$ are skewed out of $T_i^0|_\mathcal{P} = C_i^0|_\mathcal{P}$, then (S.0) and (S.1) continue holding in perpetuity since no elements of $\bigcup_h \mathcal{Q}_h$ or $\mathcal{C}$ will be involved in any swaps. Thus, for the remaining cases, assume at least one weak ancestor of $p$ is skewed out. 

Further, note that if $\mathbb{S}_i^j = \emptyset$, then since at least one weak ancestor of $p$ is skewed out, we have $\{T_i^j(q) \in \mathbb{Z} : q \in \bigcup_h \mathcal{Q}_h\} = \{C_i^j(c) \in \mathbb{Z}: c \in \mathcal{C} \} = \emptyset$ by the definition of $q_m$. Hence if $\mathbb{S}_i^j = \emptyset$, then (S.0) and (S.1) continue to hold in perpetuity, since neither tableau has labels outside of $\mathcal{P}$. Thus, for the remaining cases, we may further assume $\mathbb{S}_i^j \neq \emptyset$.

\medskip
\noindent
{\sf (Case 2: $j =0$):} First, we verify that $\theta_{i+1} \subseteq \ic(C_i^0)$. Let $t \in \theta_{i+1}$.  Since $t \in \theta_{i+1}$, we have that $t \in \gamma_{i+1} \cap \mathcal{P}$, so $t$ is an inner corner in $T_i^0|_\mathcal{P} = C_i^0|_\mathcal{P}$. (This equality is by the inductive (S.0).) Hence, since $C$ has skewed out nodes only in $\mathcal{P}$, it follows that $t \in \ic(C_i^0)$. Thus, $\theta_{i+1} \subseteq \ic(C_i^0)$.

Since $\theta_{i+1} = \gamma_{i+1} \cap \mathcal{P}$, we have 
\begin{align*}
T_i^{1}|_\mathcal{P} &=  \mathsf{AddDots}_{\gamma_{i+1}} (T_i^{0})|_\mathcal{P}
= \mathsf{AddDots}_{\theta_{i+1}} (T_i^{0})|_\mathcal{P} \\
&= \mathsf{AddDots}_{\theta_{i+1}} (T_i^{0}|_\mathcal{P})
= \mathsf{AddDots}_{\theta_{i+1}} (C_i^{0}|_\mathcal{P}) \\
&= C_i^1|_\mathcal{P},
\end{align*}
proving (S.0) for $T_i^1$ and $C_i^1$.
Similarly to the previous case, since adding $\bullet$s does not affect the numerical labels in $\bigcup_h \mathcal{Q}_h$ or $\mathcal{C}$, (S.1) for $T_i^1$ and $C_i^1$ is immediate from (S.1) for $T_i^0$ and $C_i^0$.

\medskip
\noindent
{\sf (Case 3: $0 < j < M+1$):}
We show (S.0) first.
Observe that the structure of $\mathcal{R}$, $\mathcal{P} \slantsum{p} \mathcal{C}$, and $\mathcal{P}$ easily ensures that if $q \in \mathcal{P}$, then 
\begin{itemize}
\item[(F.1)] for all $\hat{q} \lessdot q$ (in either $\mathcal{R}$ or $\mathcal{P} \slantsum{p} \mathcal{C}$), $\hat{q} \in \mathcal{P}$, and 
\item[(F.2)] if $q \neq p$, then for all $q' \gtrdot q$ (in either $\mathcal{R}$ or $\mathcal{P} \slantsum{p} \mathcal{C}$) we have that $q' \in \mathcal{P}$.
\end{itemize}
Now (F.1), (F.2), the fact that $T_i^j|_\mathcal{P} = C_i^j|_\mathcal{P}$, and the local nature of the swapping process, together ensure that $T_i^{j+1}(q) = F_i^{j+1}(q)$ for all $q \in \mathcal{P}$ with $q \neq p$. Furthermore, if $T_i^j(p) \neq \bullet$, then (F.1) ensures $T_i^{j+1}(p) = F_i^{j+1}(p)$. 

Hence, it remains to consider the situation where $T_i^j(p) =C_i^j(p)=\bullet$. Then
\begin{align*}
T_i^{j+1}(p) &= \begin{cases}
j, & \text{if $j = \min \{T_i^j(p'') \in \mathbb{Z}: p'' \in \mathcal{R}$ and $p'' > p$\}}\\
\bullet, & \text{otherwise}
\end{cases} \\ 
&= \begin{cases}
j, & \text{if $j = \min \{C_i^j(p'') \in \mathbb{Z}: p'' \in \mathcal{P} \slantsum{p} \mathcal{C}$ and $p'' > p$\}}\\
\bullet, & \text{otherwise}
\end{cases} \\ 
 &=C_i^{j+1}(p).
\end{align*}
Here, the first and third equalities are by the definition of the swapping process and the increasingness of the tableaux. The second equality is because 
\[
\min \text{$\{T_i^j(p'') \in \mathbb{Z}: p'' \in \mathcal{R}$ and $p'' > p$\}}
= \min \text{$\{C_i^j(p'') \in \mathbb{Z}: p'' \in \mathcal{P} \slantsum{p} \mathcal{C}$ and $p'' > p$\}},
\] 
as follows from the inductive (S.0), (S.1), and the assumption that $\mathbb{S}_i^j \neq \emptyset$. This proves (S.0).

It remains to show (S.1). The swapping process only affects labels with value $j$. We already have \[\{T_i^j(q) \in \mathbb{Z} : q \in \bigcup \mathcal{Q}_h \text{ and } T_i^j(q) \leq q_m\} = \{C_i^j(c) \in \mathbb{Z}: c \in \mathcal{C} \text{ and } C_i^j(c) \leq q_m\},\] so it just remains to show that 
\begin{align*}
j \in \{C_i^{j+1}(c) \in \mathbb{Z}: c \in \mathcal{C} &\text{ and } C_i^{j+1}(c) \leq q_m\} \\
&\Updownarrow \\
j \in \{T_i^{j+1}(q) \in \mathbb{Z} : q \in \bigcup \mathcal{Q}_h &\text{ and } T_i^{j+1}(q) \leq q_m\}.
\end{align*}

Either $T_i^{j+1}(p) = j$ or not.
If $T_i^{j+1}(p) = j$, then (S.0) for $T_i^{j+1}$ and $C_i^{j+1}$ (which is already established at this point) implies $C_i^{j+1}(p) =j$. Hence, the increasingness of $T_i^{j+1}$ and $C_i^{j+1}$ ensures that
\[
j \notin \{T_i^{j+1}(q) \in \mathbb{Z} : q \in \bigcup \mathcal{Q}_h \text{ and } T_i^{j+1}(q) \leq q_m\}
\]
and
\[
j \notin \{C_i^{j+1}(c) \in \mathbb{Z}: c \in \mathcal{C} \text{ and } C_i^{j+1}(c) \leq q_m\}.
\]

Otherwise, $T_i^{j+1}(p) \neq j$. Then, nothing could have been swapped into $p$ at this stage. Thus, since $p$ is the only element connecting $\mathcal{P}$ to $\bigcup \mathcal{Q}_h$ or $\mathcal{C}$, in this situation (S.1) for $T_i^{j+1}$ and $C_i^{j+1}$ is immediate from (S.1) for $T_i^j$ and $C_i^j$.
This completes the induction and shows that $\mathsf{rects}|_\mathcal {P} (T) \subseteq \mathsf{rects}|_\mathcal{P} (C)$. 

To show the reverse containment $\mathsf{rects}|_\mathcal {P} (C) \subseteq \mathsf{rects}|_\mathcal{P} (T)$, we follow the same strategy, except that we first remove any skewed out nodes in $\bigcup_h \mathcal{Q}_h$. Suppose $C \xrightarrow{\theta_1, \dots, \theta_n} V$. We must find a sequence of inner corners that yields a rectification $U$ of $T$ such that $U|_\mathcal{P} = V|_\mathcal{P}$. First, we remove any skewed nodes in $\bigcup_h \mathcal{Q}_h$, as follows. Let $T_0 = T$. Recursively define 
\[\alpha_{i+1} \coloneqq \ic(T_i) \cap (\bigcup_h \mathcal{Q}_h)\] and 
\[T_{i+1} \coloneqq \mathsf{Slide}_{\alpha_{i+1}}(T_i).\]
Let $k$ be least such that $\alpha_k = \emptyset$. 
Then $T_k$ has no skewed out nodes in $\bigcup_h \mathcal{Q}_h$. Finally, let
$\gamma_{i} \coloneqq \theta_i \cap \mathcal{P}$. Then $\mathsf{Slide}_{\gamma_1, ..., \gamma_n}(T_k) = U|_\mathcal{P}$. The proof is exactly the same as before except $C$, $V$, $T_k$, $\theta_i$, and $\gamma_i$ play the respective roles of $T$, $U$, $C$, $\gamma_i$, and $\theta_i$. 
This completes the proof of (1).

The proof for (2) is by induction on $n$.
The base case $n=1$ is the previously proven statement (1). For $n>1$, let 
\[
\mathcal{P}' = \mathcal{P} \slantsum{p_1} (\mathcal{Q}_1^1, \dots ,\mathcal{Q}_{m_1}^1)
\]
and let 
\[
\mathcal{S} =  \mathcal{P}' \slantsum{p_2} \mathcal{C}_2 \slantsum{p_3}\dots\slantsum{p_n} \mathcal{C}_n.
\]
By the inductive hypothesis, there is a skew increasing $\mathcal{S}$-tableau $T_\mathcal{S}$ such that 
\[
\mathsf{rects}|_{\mathcal{P}'}(T_S) = \mathsf{rects}|_{\mathcal{P}'}(T),
\]
so furthermore by restriction we have
\[
\mathsf{rects}|_{\mathcal{P}}(T_S) = \mathsf{rects}|_{\mathcal{P}}(T).
\]
Observe that 
\[
\mathcal{S} = 
\mathcal{P}' \slantsum{p_2} \mathcal{C}_2 \slantsum{p_3}\dots\slantsum{p_n} \mathcal{C}_n = (\mathcal{P} \slantsum{p_2} \mathcal{C}_2 \slantsum{p_3}\dots\slantsum{p_n} \mathcal{C}_n) \slantsum{p_1} (\mathcal{Q}_1^1, \dots ,\mathcal{Q}_{m_1}^1).
\]
Then, by (1), there is an increasing skew $\mathcal{P} \slantsum{p_1} \mathcal{C}_2 \slantsum{p_2}\dots\slantsum{p_n} \mathcal{C}_n$-tableau $T_\mathcal{C}$ such that 
\[
\mathsf{rects}|_{(\mathcal{P} \slantsum{p_2} \mathcal{C}_2 \slantsum{p_3}\dots\slantsum{p_n} \mathcal{C}_n)}(T_\mathcal{S}) = \mathsf{rects}|_{(\mathcal{P} \slantsum{p_2} \mathcal{C}_2 \slantsum{p_3}\dots\slantsum{p_n} \mathcal{C}_n)}(T_\mathcal{C})
\]
so furthermore by restriction
\[
\mathsf{rects}|_{\mathcal{P}}(T_\mathcal{S}) = \mathsf{rects}|_{\mathcal{P}}(T_\mathcal{C}).
\]
Thus,
\[
\mathsf{rects}|_{\mathcal{P}}(T_\mathcal{C}) = \mathsf{rects}|_{\mathcal{P}}(T_\mathcal{S}) = \mathsf{rects}|_{\mathcal{P}}(T),
\]
as desired.
\end{proof}

\begin{definition}
Let $\mathcal{P}$ be a poset and fix $p \in \mathcal{P}$.  
Let $U$ be a URT in $\mathcal{P}$. 
Then $U$ is a \textbf{$p$-chain unique rectification target} in $\mathcal{P}$ if $U$ is a URT in $\mathcal{P} \slantsum{p} \mathcal{C}$ for every chain poset $\mathcal{C}$. More generally, $U$ is a {\bf $\{p_1,\dots,p_n\}$-chain URT} in $\mathcal{P}$ if $U$ is a URT in $\mathcal{P} \slantsum{p_1} \mathcal{C}_1 \slantsum{p_2} \cdots \slantsum{p_n} \mathcal{C}_n$ for all pairwise disjoint chains $\mathcal{C}_1, \dots , \mathcal{C}_n$.
\end{definition}

Being a $p$-chain URT is a strictly stronger notion than being a URT. For an example of a URT that is not a $p$-chain URT, see Remark~\ref{rem:URT_not_pchain}.

\begin{proposition}
\label{Theorem p-chain URT rectifies uniquely on restriction}
Let $\mathcal{R}$ be the slant sum 
\[
\mathcal{R} \coloneqq \mathcal{P} \slantsum{p_1} (\mathcal{Q}_1^1, \dots ,\mathcal{Q}_{m_1}^1) \slantsum{p_2} \cdots \slantsum{p_n} (\mathcal{Q}_1^n,\dots, \mathcal{Q}^n_{m_n}),
\]
for $p_i$ distinct and $Q_i^j$ all pairwise disjoint with minimum elements.
Let $T$ be a skew increasing $\mathcal{R}$-tableau with rectifications $U$ and $V$.
If $U|_\mathcal{P}$ is a $\{p_1,..,p_n\}$-chain URT in $\mathcal{P}$, then $U|_\mathcal{P} = V|_\mathcal{P}$.
\end{proposition}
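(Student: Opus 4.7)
The plan is to reduce to Lemma~\ref{Lemma poset "sees" chain}(2) and then exploit the URT hypothesis within the resulting simpler poset. Concretely, invoke that lemma with chain sizes $k_i := |\{x \in \mathcal{P} : x \leq p_i\}|$ to produce a skew increasing tableau $C$ in $\mathcal{R}' := \mathcal{P} \slantsum{p_1} \mathcal{C}_1 \slantsum{p_2} \cdots \slantsum{p_n} \mathcal{C}_n$ satisfying $\mathsf{rects}|_\mathcal{P}(T) = \mathsf{rects}|_\mathcal{P}(C)$. Since $U, V \in \mathsf{rects}(T)$, this gives $U|_\mathcal{P}, V|_\mathcal{P} \in \mathsf{rects}|_\mathcal{P}(C)$, so it suffices to show that $\mathsf{rects}|_\mathcal{P}(C)$ is a singleton.

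By the $\{p_1, \ldots, p_n\}$-chain URT hypothesis, $U|_\mathcal{P}$ is a URT in $\mathcal{R}'$. The key idea is to exhibit $U|_\mathcal{P}$ itself, viewed as a straight-shape tableau in $\mathcal{R}'$ whose support lies entirely in $\mathcal{P}$, as a rectification of $C$ in $\mathcal{R}'$. Once this is done, the URT property immediately forces $\mathsf{rects}(C) = \{U|_\mathcal{P}\}$, hence $\mathsf{rects}|_\mathcal{P}(C) = \{U|_\mathcal{P}\}$, and the conclusion $V|_\mathcal{P} = U|_\mathcal{P}$ follows.

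To exhibit $U|_\mathcal{P} \in \mathsf{rects}(C)$, I would fix any slide sequence $\gamma_1, \ldots, \gamma_r$ rectifying $T$ to $U$ in $\mathcal{R}$ and apply the induced sequence $\theta_i := \gamma_i \cap \mathcal{P}$ to $C$ in $\mathcal{R}'$. From invariant (S.1) in the proof of Lemma~\ref{Lemma poset "sees" chain}, the resulting straight-shape tableau $W$ already satisfies $W|_\mathcal{P} = U|_\mathcal{P}$, and its chain labels are controlled by the initial labels $q_1 < \cdots < q_m$ placed in the chains by the lemma's construction. The main obstacle is to strengthen this to $W = U|_\mathcal{P}$ as tableaux on $\mathcal{R}'$, i.e., to force $W$ to have empty chain support. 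I expect this from the fact that slides never create new labels, combined with the choice of chain sizes $k_i$: these match the number of slots in $\mathcal{P}$'s principal order ideal at $p_i$, so every label capable of cascading from the $\mathcal{Q}$-parts into $\mathcal{P}$ during rectification has a corresponding chain position through which to drain.

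A careful label-flow bookkeeping, driven by (S.1), should close the argument, possibly with a small case analysis separating configurations where $\mathsf{rects}(T)$ is already a singleton for structural reasons (in which case the URT hypothesis is not needed) from the generic case where the URT property does the essential work.
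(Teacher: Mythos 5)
Your strategy of reducing via Lemma~\ref{Lemma poset "sees" chain}(2) to the chain-summed poset $\mathcal{R}' := \mathcal{P} \slantsum{p_1}\mathcal{C}_1 \slantsum{p_2}\cdots \slantsum{p_n}\mathcal{C}_n$ and then invoking the chain-URT hypothesis is exactly the paper's route. However, your proposal hinges on a false claim: that the rectification $W$ of $C := T_\mathcal{C}$ produced by the induced slide sequence $\theta_i := \gamma_i \cap \mathcal{P}$ has empty chain support (so that $W = U|_\mathcal{P}$ as $\mathcal{R}'$-tableaux). The chain sizes $k_i$ in Lemma~\ref{Lemma poset "sees" chain} are only \emph{upper bounds}, not exact counts, of the number of chain labels that drain into $\mathcal{P}$; in general, labels remain on the chains after rectification. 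For instance, let $\mathcal{P}$ be the chain $p_1 \lessdot p_2 \lessdot p_3$, let $\mathcal{Q} = \{q\}$, $\mathcal{R} = \mathcal{P}\slantsum{p_1}\mathcal{Q}$, and take $T$ with $T(p_2) = 1$, $T(p_3) = 2$, $T(q) = 2$, with $p_1$ skewed out, so $U(p_1)=1$, $U(p_2)=2$, $U(q)=2$. Then $k_1 = m = 1$, $q_1 = 2$, and $C$ has $C(p_2)=1$, $C(p_3)=2$, $C(c_1)=2$ with $p_1$ skewed out. Sliding at $p_1$ gives $W$ with $W(p_1)=1$, $W(p_2)=2$, $W(c_1)=2$: the label $2$ on $c_1$ never moves, so $W \neq U|_\mathcal{P}$ even though $W|_\mathcal{P} = U|_\mathcal{P}$. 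This is not a degenerate configuration that the ``small case analysis'' you describe would safely set aside.

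Because $T_\mathcal{C}$ need not rectify to $U|_\mathcal{P}$ itself, the URT property of $U|_\mathcal{P}$ in $\mathcal{R}'$ cannot be applied to $T_\mathcal{C}$ in the way you propose: the definition of a URT constrains only those skew tableaux that rectify to $U|_\mathcal{P}$ \emph{exactly}, not those whose rectifications merely restrict to $U|_\mathcal{P}$ on $\mathcal{P}$. The paper's own proof simply asserts $|\mathsf{rects}|_\mathcal{P}(T_\mathcal{C})| = 1$ from the chain-URT hypothesis without routing through the claim that $T_\mathcal{C}$ rectifies to $U|_\mathcal{P}$, so abandoning that claim does not put you off the intended track --- but you do need to replace it with a different mechanism for making the URT hypothesis on $\mathcal{R}'$ control $\mathsf{rects}|_\mathcal{P}(T_\mathcal{C})$. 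One natural route is to note that, for any rectification $W$ of $T_\mathcal{C}$, the residual chain contents $W|_{\mathcal{C}_i}$ are forced (they consist of precisely the labels of $T_\mathcal{C}|_{\mathcal{C}_i}$ exceeding $W(p_i)$), so the restriction map $\mathsf{rects}(T_\mathcal{C}) \to \mathsf{rects}|_\mathcal{P}(T_\mathcal{C})$ is injective, and then to compare against a suitably pared-down skew tableau that genuinely rectifies to $U|_\mathcal{P}$.
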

\begin{proof}
By Lemma~\ref{Lemma poset "sees" chain}, there exist chain posets $\mathcal{C}_1,\dots,\mathcal{C}_n$ and a skew increasing $\mathcal{P} \slantsum{p_1}\mathcal{C}_1 \slantsum{p_2}\cdots \slantsum{p_n} \mathcal{C}_n$-tableau $T_\mathcal{C}$ such that
\[
\mathsf{rects}|_\mathcal{P} (T) = \mathsf{rects}|_\mathcal{P} (T_\mathcal{C}).
\]
Since $U|_\mathcal{P}$ is a $\{p_1,..,p_n\}$-chain URT, we know $|\mathsf{rects}|_\mathcal{P} (T_\mathcal{C})| = 1 $, so $|\mathsf{rects}|_\mathcal{P} (T)| = 1$.
Hence $U|_\mathcal{P} = V|_\mathcal{P}$.
\end{proof}

\begin{proposition}
\label{slant sums with AB-chain URTs}
\gap
\begin{enumerate}
\item Let $\mathcal{R}$ be the slant sum $\mathcal{P} \slantsum{p}\mathcal{Q}$ and let $U$ be an increasing $\mathcal{R}$-tableau of straight shape.
Suppose $A \subseteq \mathcal{P}$ and $B \subseteq \mathcal{Q}$.
If $p \in A$ and $U|_\mathcal{P}$ is an $A$-chain URT in $\mathcal{P}$ and $U|_\mathcal{Q}$ is a $B$-chain URT in $\mathcal{Q}$, then $U$ is an $A \cup B$-chain URT in $\mathcal{R}$.
\item More generally, let
\[ \mathcal{R} \coloneqq \mathcal{P} \slantsum{p_1} (\mathcal{Q}_1^1, \dots ,\mathcal{Q}_{m_1}^1) \slantsum{p_2} \cdots \slantsum{p_n} (\mathcal{Q}_1^n,\dots, \mathcal{Q}^n_{m_n})\] and 
let $U$ be an increasing $\mathcal{R}$-tableau of straight shape.
Suppose $\{p_1,\dots, p_n\} \subseteq A \subseteq \mathcal{P}$ and $B_i^j \subseteq \mathcal{Q}_i^j$.
Set 
\[D \coloneqq A \cup \left( \bigcup_{i,j} B_i^j \right).\]
If $U|_\mathcal{P}$ is an $A$-chain URT in $\mathcal{P}$ and $U|_{\mathcal{Q}_i^j}$ is a $B_i^j$-chain URT in $\mathcal{Q}_i^j$ for each $i,j$, then $U$ is a $D$-chain URT in $\mathcal{R}$.
\end{enumerate}
\end{proposition}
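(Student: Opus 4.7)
We focus on part (1); part (2) will follow by induction on the total number of attached $\mathcal{Q}_{i}^{j}$'s, peeling off one at a time and applying (1). So fix arbitrary pairwise disjoint chains $\mathcal{D}_{a}$ for $a \in A \cup B$ and set $\widetilde{\mathcal{R}} \coloneqq \mathcal{R} \slantsum{a_{1}} \mathcal{D}_{a_{1}} \slantsum{a_{2}} \cdots$. Write $\widetilde{\mathcal{P}}$ for $\mathcal{P}$ with the chains at $A$ attached and $\widetilde{\mathcal{Q}}$ for $\mathcal{Q}$ with the chains at $B$ attached, so that $\widetilde{\mathcal{R}} = \widetilde{\mathcal{P}} \slantsum{p} \widetilde{\mathcal{Q}}$. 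Let $T$ be any skew $\widetilde{\mathcal{R}}$-tableau rectifying to $U$, and let $V$ be any other rectification; the task is to show $V = U$.

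The first step is to view $\widetilde{\mathcal{R}}$ as a slant sum onto $\mathcal{P}$ at the points of $A$, where the bundle at $p$ consists of both $\mathcal{D}_{p}$ and $\widetilde{\mathcal{Q}}$ and each other bundle is a single chain $\mathcal{D}_{a}$. Proposition~\ref{Theorem p-chain URT rectifies uniquely on restriction} applied with $U|_{\mathcal{P}}$ as the $A$-chain URT then yields $V|_{\mathcal{P}} = U|_{\mathcal{P}}$.

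The second step is to match $V$ and $U$ on the funnel $\widetilde{\mathcal{Q}} \subseteq \widetilde{\mathcal{R}}$ with minimum $\hat{0}_{\mathcal{Q}}$. Since $U|_{\widetilde{\mathcal{Q}}} = U|_{\mathcal{Q}}$ is a URT in $\widetilde{\mathcal{Q}}$ by the $B$-chain URT hypothesis, Corollary~\ref{Corollary agree on filters with minimums} reduces the problem to verifying $U(\hat{0}_{\mathcal{Q}}) = V(\hat{0}_{\mathcal{Q}})$. For this, apply Lemma~\ref{Lemma poset "sees" chain}(2) to collapse the bundle $(\mathcal{D}_{p}, \widetilde{\mathcal{Q}})$ attached at $p$ into a single chain $\mathcal{C}$ of length $|\{x \in \mathcal{P} : x \le p\}|$, yielding a skew tableau $C$ on $\mathcal{P}$-with-chains-at-$A$ whose $\mathcal{P}$-restricted rectifications agree with those of $T$. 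The $A$-chain URT hypothesis pins down the rectification on $\mathcal{P}$, and by inspection the smallest label to descend into $\mathcal{C}$ is then determined by $C$ and $U|_{\mathcal{P}}$ alone; pulling back through the Lemma~\ref{Lemma poset "sees" chain} correspondence identifies this value with $U(\hat{0}_{\mathcal{Q}}) = V(\hat{0}_{\mathcal{Q}})$. Corollary~\ref{Corollary agree on filters with minimums} then delivers $V|_{\widetilde{\mathcal{Q}}} = U|_{\widetilde{\mathcal{Q}}}$; in particular $V|_{\mathcal{Q}} = U|_{\mathcal{Q}}$ and $V$ has no support in the chains $\mathcal{D}_{b}$ for $b \in B$.

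Finally, for each $a \in A$, apply Proposition~\ref{Theorem corresponding respects slides} to the chain funnel $\mathcal{D}_{a}$: the tableau $(T \to U)|_{\mathcal{D}_{a}}$ rectifies to $U|_{\mathcal{D}_{a}} = \emptyset$ and $(T \to V)|_{\mathcal{D}_{a}}$ rectifies to $V|_{\mathcal{D}_{a}}$. Since rectification within a chain merely sorts the labels into increasing order beginning at the minimum, a nonempty input produces a nonempty output; inspection of Definition~\ref{definition corresponding} combined with the already-established equality $V|_{\mathcal{P}} = U|_{\mathcal{P}}$ shows that $(T \to V)|_{\mathcal{D}_{a}}$ must be empty, forcing $V|_{\mathcal{D}_{a}} = \emptyset$. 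Combining the three steps gives $V = U$, completing the plan. The chief obstacle is the $\hat{0}_{\mathcal{Q}}$-value equality in the second step: the Lemma~\ref{Lemma poset "sees" chain} reduction to the chain case is the cleanest route, though a direct analysis of which $\widetilde{\mathcal{Q}}$-originating label first descends past $p$ during rectification would also work.
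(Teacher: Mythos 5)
Your overall outline matches the paper's proof exactly: establish $U|_\mathcal{P} = V|_\mathcal{P}$ via Proposition~\ref{Theorem p-chain URT rectifies uniquely on restriction}, then show agreement on the funnel generated by $\hat{0}_\mathcal{Q}$ (containing $\mathcal{Q}$ and the $B$-chains), and finally on the $A$-chains. Step~1 is identical to the paper's. The remaining two steps diverge from the paper and each contains a genuine gap.

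In your Step~2, the paper shows $(T \to U)|_{\mathcal{Q}_C} = (T \to V)|_{\mathcal{Q}_C}$ by directly re-characterizing the set $\mathcal{E}$ of Definition~\ref{definition corresponding} as $\{q : T(q) > U(p)\}$, after which $U(p)=V(p)$ from Step~1 closes the argument. You instead want $U(\hat{0}_\mathcal{Q}) = V(\hat{0}_\mathcal{Q})$ so as to invoke Corollary~\ref{Corollary agree on filters with minimums}, and you appeal to Lemma~\ref{Lemma poset "sees" chain}(2). But that lemma only asserts $\mathsf{rects}|_\mathcal{P}(T) = \mathsf{rects}|_\mathcal{P}(C)$: it controls rectifications restricted to $\mathcal{P}$, and says nothing about the value at $\hat{0}_\mathcal{Q}$, which sits outside $\mathcal{P}$. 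The ``pull-back through the Lemma~\ref{Lemma poset "sees" chain} correspondence'' is therefore not available from that lemma's statement; one would have to mine its proof (the invariant (S.1) there does track the small labels that enter the attached branches), or else argue directly as the paper does. You flag this as the chief obstacle, and rightly so: it is a real gap.

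In your Step~3, you apply Proposition~\ref{Theorem corresponding respects slides} to each chain $\mathcal{D}_a$ and claim that $(T\to V)|_{\mathcal{D}_a}$ must be empty from ``inspection of Definition~\ref{definition corresponding} combined with $V|_\mathcal{P} = U|_\mathcal{P}$.'' That is not a valid derivation: the set defining $(T\to V)|_{\mathcal{D}_a}$ depends on $V(\hat{0}_{\mathcal{D}_a})$, which is exactly the quantity not yet controlled, and moreover Definition~\ref{definition corresponding} is not even cleanly applicable when $T$ has support at $\hat{0}_{\mathcal{D}_a}$ but $U$ does not (a situation that can occur). The paper avoids the corresponding-tableau machinery entirely here, arguing directly that in any rectification $W$ the labels on $\mathcal{C}_a$ are exactly those labels of $T|_{\mathcal{C}_a}$ exceeding $W(a)$; combined with $U(a)=V(a)$ this forces $U|_{\mathcal{C}_a}=V|_{\mathcal{C}_a}$. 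Adopting that direct argument is the cleanest way to close your Step~3.

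Your induction for part (2) is fine and matches the paper's intent.
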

\begin{proof}
For simplicity, we only explicitly prove part (1). The proof of part (2) follows the same strategy.

Let $\mathcal{C}$ be a poset formed by taking $\mathcal{R}$ and slant summing chains on top of elements in $A \cup B$. We must show that $U$ is a URT in $\mathcal{C}$. Hence, suppose some skew increasing $\mathcal{C}$-tableau $C$ rectifies to $U$ and $V$. Then, we must show $U = V$.
Since $U$ is an $A$-chain URT in $\mathcal{P}$, by Proposition \ref{Theorem p-chain URT rectifies uniquely on restriction}, we have that $U|_\mathcal{P} = V|_\mathcal{P}.$

It is easy to see that since $U$ and $V$ agree on $\mathcal{P}$, they must also agree on any chain $\mathcal{C}_a$ slant summed onto an element $a \in A \subseteq \mathcal{P}$. This is since, in any such chain, the labels of $\mathcal{C}_a$ in $U$ and $V$ must be exactly those labels of $\mathcal{C}_a$ in $T$ that have values greater than $U(a) = V(a)$. By increasingness of $U$ and $V$, these are necessarily written in increasing order along $\mathcal{C}_a$ in both tableaux. Thus, $U|_{\mathcal{C}_a} = V|_{\mathcal{C}_a}$.

Let $\mathcal{Q}_C$ be the principal order filter of $\mathcal{C}$ generated by $\hat{0}_\mathcal{Q}$.
Since $\mathcal{Q}_C$ is a funnel in $\mathcal{C}$, we may consider the tableaux $(T \to U)|_{\mathcal{Q}_C}$ and $(T \to V)|_{\mathcal{Q}_C}$. 
By Definition~\ref{definition corresponding}, we have $(T \to U)|_{\mathcal{Q}_C} = (T \to V)|_{\mathcal{Q}_C}$, since both tableaux defined to the the restriction of $T$ to the set 
\[\mathcal{E} \coloneqq \{q \in \mathcal{Q} : T(q) > U(p) \} = \{q \in \mathcal{Q} : T(q) > V(p) \},\] where the second equality is by recalling $U(p) = V(p)$ (since $p \in \mathcal{P}$) and noting that $p$ is the only element of $\mathcal{C}$ covered by $\hat{0}_\mathcal{Q}$.
By Proposition~\ref{Theorem corresponding respects slides}, $U|_{\mathcal{Q}_C}$ and $V|_{\mathcal{Q}_C} $ are rectifications of $(T \to U)|_\mathcal{Q} =(T \to V)|_\mathcal{Q}$.
However, since $U|_{\mathcal{Q}}$ is a $B$-chain URT in $\mathcal{Q}$, we have that $U|_{\mathcal{Q}}$ is a URT in $\mathcal{Q}_C$. Hence $U|_{\mathcal{Q}_C} = V|_{\mathcal{Q}_C}$.

Thus, we have shown that $U(c) = V(c)$ for all $c \in \mathcal{C}$, so $U=V$, as desired.
\end{proof}

The following result follows inductively from Corollary~\ref{Corollary slant sum tree base case}; we, however, prove it here as a useful demonstration of working with $p$-chain URTs in preparation for more sophisticated uses later.

\begin{corollary}
\label{Trees everything URT}
Let $\mathscr{T}$ be a tree. Let $U$ be any increasing $\mathscr{T}$-tableau of straight shape. Then $U$ is a URT in $\mathscr{T}$.
\end{corollary}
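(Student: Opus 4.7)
The plan is to prove, by strong induction on $|\mathscr{T}|$, the stronger statement that $U$ is in fact a $\mathscr{T}$-chain URT in $\mathscr{T}$ (not merely a URT). This strengthening is essential: it is exactly what is needed to feed the inductive step into Proposition~\ref{slant sums with AB-chain URTs}. The base case $|\mathscr{T}| = 1$ is immediate, since any slant sum of chains onto the single element of $\mathscr{T}$ again produces a chain, and every increasing tableau on a chain rectifies in only one way.

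For the inductive step, I would peel off the root. Let $\hat{0}_\mathscr{T}$ be the minimum element (which exists by definition of ``tree'') and let $x_1,\ldots,x_n$ be its children. Because $\mathscr{T}$ is a tree, every non-root element has a unique parent, so the principal order filters $\mathscr{T}_k$ generated by the $x_k$ are themselves disjoint trees with minima $x_k$, and together with $\hat{0}_\mathscr{T}$ they partition $\mathscr{T}$. This gives the slant-sum decomposition
\[\mathscr{T} = \{\hat{0}_\mathscr{T}\} \slantsum{\hat{0}_\mathscr{T}} (\mathscr{T}_1, \ldots, \mathscr{T}_n).\]

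I would then invoke Proposition~\ref{slant sums with AB-chain URTs}(2) with $\mathcal{P} = \{\hat{0}_\mathscr{T}\}$, $A = \{\hat{0}_\mathscr{T}\}$, and $B_1^k = \mathscr{T}_k$. The two hypotheses to check are: (i) $U|_{\{\hat{0}_\mathscr{T}\}}$ is an $A$-chain URT in $\mathcal{P}$, which holds by the same chain-is-trivially-a-URT observation used in the base case (any slant sum of a chain onto the single element $\hat{0}_\mathscr{T}$ is a chain); and (ii) each $U|_{\mathscr{T}_k}$ is a $\mathscr{T}_k$-chain URT in $\mathscr{T}_k$, which is precisely the inductive hypothesis applied to the strictly smaller tree $\mathscr{T}_k$. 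Proposition~\ref{slant sums with AB-chain URTs}(2) then yields that $U$ is a $D$-chain URT in $\mathscr{T}$, where $D = A \cup \bigcup_k B_1^k = \{\hat{0}_\mathscr{T}\} \cup \bigcup_k \mathscr{T}_k = \mathscr{T}$, completing the induction.

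There is no real obstacle in this argument; the only conceptual point is recognizing that Proposition~\ref{slant sums with AB-chain URTs} requires $p$-chain URTs on the factors, so one cannot prove the corollary by directly inducting on the weaker ``URT'' property. Strengthening the statement to ``$\mathscr{T}$-chain URT'' is exactly what makes the induction close, and it is this demonstration of the $p$-chain machinery that the remark preceding the corollary advertises.
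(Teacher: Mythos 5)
Your proof is correct, and the key move is exactly the one the paper makes: strengthen the claim to ``$U$ is a $\mathscr{T}$-chain URT'' so that the induction closes through Proposition~\ref{slant sums with AB-chain URTs}, and ground the base case in the trivial observation that a chain has no rectification choices (Lemma~\ref{Lemma unique rectification in bottom chain}). Where you diverge is in the decomposition driving the induction. The paper filters the tree by a saturated chain of order ideals $\mathscr{T}_1 \subset \mathscr{T}_2 \subset \cdots \subset \mathscr{T}_n$, appending one leaf at a time and invoking Proposition~\ref{slant sums with AB-chain URTs}(1) with the singleton $\{t\}$ at each step, so the induction runs linearly on $|\mathscr{T}_i|$. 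You instead peel off the root, split $\mathscr{T}$ into the principal order filters at the root's children, and invoke Proposition~\ref{slant sums with AB-chain URTs}(2) once per level, recursing on the strictly smaller subtrees. Both decompositions are valid for trees precisely because of the unique-parent property; yours matches the natural recursive structure of a rooted tree while the paper's is closer in spirit to how the later arguments build up general $d$-complete posets one irreducible component at a time. The bookkeeping you need — that $\hat{0}_\mathscr{T}$ lies in $A$, that the $\mathscr{T}_k$ are pairwise disjoint with minima covering the root, and that $D = \{\hat{0}_\mathscr{T}\} \cup \bigcup_k \mathscr{T}_k = \mathscr{T}$ — all checks out, so the argument is complete.
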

\begin{proof}
Let $n = |\mathscr{T}|$. Define $\mathscr{T}_1 \subseteq  \dots \subseteq \mathscr{T}_n$ such that for all $i$, $\mathscr{T}_i$ is an order ideal of $\mathscr{T}$ and $|\mathscr{T}_i| = i$. In particular, we have $\mathscr{T}_1 = \{\hat{0}_\mathscr{T}\}$ and $\mathscr{T}_n = \mathscr{T}$. We claim that for all $i$, $U|_{\mathscr{T}_i}$ is a $\mathscr{T}_i$-chain URT in $\mathscr{T}_i$. We work by induction on $i$.
First, we note that in any singleton poset $\mathcal{P}$, every increasing $\mathcal{P}$-tableau of straight shape is a $\mathcal{P}$-chain URT by Lemma \ref{Lemma unique rectification in bottom chain}.
Thus, since $|\mathscr{T}_1| = 1$, $U|_{\mathscr{T}_1}$ is a $\mathscr{T}_1$-chain URT in $\mathscr{T}_1$. Now suppose $i>1$. Let $t$ be the unique element in $\mathscr{T}_i \setminus \mathscr{T}_{i-1}$ and let $p$ be the unique parent of $t$ in $\mathscr{T}$. Then $\mathscr{T}_i = \mathscr{T}_{i-1} \; \slantsum{p} \{t\}$. By the inductive hypothesis, $U|_{\mathscr{T}_{i-1}}$ is a $\mathscr{T}_{i-1}$-chain URT in $\mathscr{T}_{i-1}$. Because $|\{t\}|=1$, $U|_{\{p\}}$ is a $\{t\}$-chain URT in $\{t\}$. Thus by Proposition \ref{slant sums with AB-chain URTs}, $U|_{\mathscr{T}_i}$ is a $\mathscr{T}_i$-chain URT in $\mathscr{T}_i$. This completes the induction. Hence $U|_{\mathscr{T}_n} = T$ is a URT in $\mathscr{T}_n = \mathscr{T}$.
\end{proof}

Trees are a particularly simple subfamily of the $d$-complete posets studied in this paper. Corollary~\ref{Trees everything URT} should be understood as a particularly strong version of Theorem~\ref{thm:main} for this special subfamily.

\section{Double-tailed diamonds}
\label{Section doubled tailed diamonds}
In this section, we investigate the $p$-chain unique rectification targets of certain posets, called double-tailed diamonds. This special family of $d$-complete posets plays a central role in the study of general $d$-complete posets. We will apply the results developed here to the general case in Section~\ref{Section $d$-complete posets}.

For $k \geq 3$, a {\bf double-tailed diamond} $\mathcal{D}(k)$ has $2k - 2$ elements, two of which are incomparable elements in the middle with chains of size $k-2$ above and below them. Figure~\ref{fig:double-tailed} illustrates the Hasse diagrams of some of these posets. It is easy to work out that any increasing tableau on any order ideal of a double-tailed diamond is a URT. (This is even explicitly observed in \cite[Proof of Theorem~3.12]{BS16}.) For application in Section~\ref{Section $d$-complete posets}, we need to strengthen this observation to the setting of $p$-chain URTs.

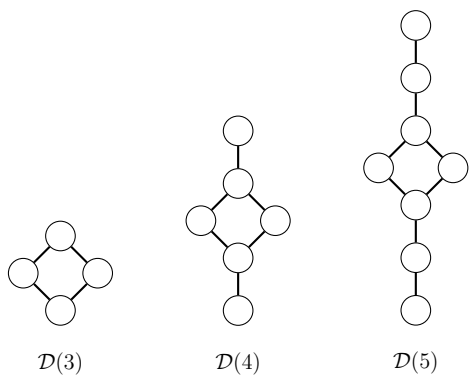
\begin{figure}[h]
\begin{tikzpicture}[every node/.append style={circle, draw=black, inner sep=0pt, minimum size=16pt}, every draw/.append style={black, thick}]
    \node (top) at (0,0) {};
    \node [below left of=top] (left)  {};
    \node [below right of=top](right) {};
    \node [below right of=left] (bottom) {};
    \node [draw=white, below  of=bottom] {$\mathcal{D}(3)$};

    \draw [black,  thick] (top) -- (left);
    \draw [black, thick] (top) -- (right);
    \draw [black, thick] (right) -- (bottom);
    \draw [black, thick] (left) -- (bottom);
\end{tikzpicture}
\qquad
\begin{tikzpicture}[every node/.append style={circle, draw=black, inner sep=0pt, minimum size=16pt}, every draw/.append style={black, thick}]
    \node [above of =top] (t1)  {};
    \node  (top) at (0,0) {};
    \node [below left  of=top] (left)  {};
    \node [below right of=top] (right) {};
    \node [below right of=left] (bottom) {};
    \node [below of=bottom] (b1) {};
    \node [draw=white, below  of=b1] {$\mathcal{D}(4)$};

    \draw [black,  thick] (top) -- (t1);
    \draw [black,  thick] (top) -- (left);
    \draw [black, thick] (top) -- (right);
    \draw [black, thick] (right) -- (bottom);
    \draw [black, thick] (left) -- (bottom);
    \draw [black, thick] (b1) -- (bottom);
\end{tikzpicture}
\qquad
\begin{tikzpicture}[every node/.append style={circle, draw=black, inner sep=0pt, minimum size=16pt}, every draw/.append style={black, thick}]
    \node [ above of =t1] (t2)  {};
    \node [ above of =top] (t1)  {};
    \node  (top) at (0,0) {};
    \node [below left  of=top] (left)  {};
    \node [below right of=top] (right) {};
    \node [below right of=left] (bottom) {};
    \node [below of=bottom] (b1) {};
    \node [below of=b1] (b2) {};
    \node [draw=white, below  of=b2] {$\mathcal{D}(5)$};

    \draw [black,  thick] (t2) -- (t1);
    \draw [black,  thick] (top) -- (t1);
    \draw [black,  thick] (top) -- (left);
    \draw [black, thick] (top) -- (right);
    \draw [black, thick] (right) -- (bottom);
    \draw [black, thick] (left) -- (bottom);
    \draw [black, thick] (b1) -- (bottom);
    \draw [black, thick] (b1) -- (b2);
\end{tikzpicture}
\qquad
\caption{The Hasse diagrams of the three smallest double-tailed diamonds.}\label{fig:double-tailed}
\end{figure}

To study the $p$-chain URTs of double-tailed diamonds, we introduce a \textbf{chained double-tailed diamond}. A chained double-tailed diamond is formed by slant summing a chain onto each of the two middle elements of the double-tailed diamond. We index the elements of a chained double-tailed diamond as shown in Figure~\ref{fig:chained_DTD}. We refer to the set of elements indexed as $\ell_k$ as the {\bf left chain} of the poset, and those indexed as $r_k$ as the {\bf right chain}.  In this notation, a chained double-tailed diamond corresponds to a triple of positive integers $m,n,p \geq 1$. We denote the chained double-tailed diamond for $(m,n,p)$ by $\mathcal{D}(m,n,p)$. In particular, $\mathcal{D}(k) = \mathcal{D}(1,k,1)$.

\begin{figure}[h]
\begin{tikzpicture}[every node/.append style={circle, draw=black, inner sep=0pt, minimum size=16pt}, every draw/.append style={black, thick}]
\node[label=right:{$t_n$}] (tn) at (0,0) {};
\node[draw=white,below of=tn] (tdots) {\vdots};
\node[label=right:{$t_2$}, below of=tdots] (t2) {};
\node[label=right:{$t_1$}, below of=t2] (t1) {};
\node[label=left:{$\ell_1$}, below left of=t1] (l1) {};
\node[label= right:{$r_1$}, below right of=t1] (r1) {};
\node[label=left:{$b_1$}, below right of=l1] (b1) {};
\node[label=left:{$b_2$}, below of=b1] (b2) {};
\node[draw=white,below of=b2] (bdots) {\vdots};
\node[label=left:{$b_n$}, below of=bdots] (bn) {};
\node[label=left:{$\ell_2$}, above left of=l1] (l2) {};
\node [draw=white, circle, above left of=l2,rotate=135] (ldots)  {\Large\ldots};
\node[label=left:{$\ell_m$}, above left of=ldots] (lm) {};
\node[label=right:{$r_2$}, above right of=r1] (r2) {};
\node [draw=white,circle, above right of=r2,rotate=45] (rdots)  {\Large\ldots};
\node[label=right:{$r_p$},above right of=rdots] (rp) {};

\draw [black,  thick] (tn) -- (tdots);
\draw [black,  thick] (t1) -- (t2);
\draw [black,  thick] (tdots) -- (t2);
\draw [black,  thick] (t1) -- (l1);
\draw [black,  thick] (l2) -- (l1);
\draw [black,  thick] (l2) -- (ldots);
\draw [black,  thick] (ldots) -- (lm);
\draw [black,  thick] (t1) -- (r1);
\draw [black,  thick] (r2) -- (r1);
\draw [black,  thick] (rdots) -- (r2);
\draw [black,  thick] (rdots) -- (rp);
\draw [black,  thick] (b1) -- (r1);
\draw [black,  thick] (b1) -- (l1);
\draw [black,  thick] (b1) -- (b2);
\draw [black,  thick] (bdots) -- (bn);
\draw [black,  thick] (bdots) -- (b2);
\end{tikzpicture}
\caption{Our standard indexing of the nodes of the chained double-tailed diamond poset $\mathcal{D}(m,n,p)$.}\label{fig:chained_DTD}
\end{figure}
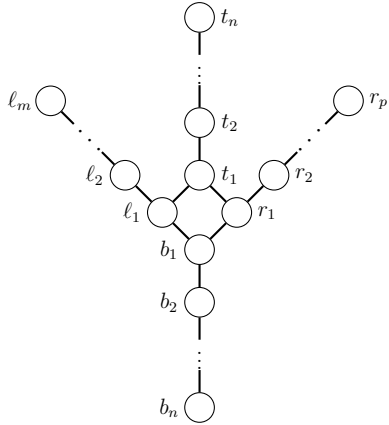

\begin{proposition}
\label{Theorem chained double tailed diamond rectifies uniquely}
Let $T$ be a skew increasing $\mathcal{D}(m,n,p)$-tableau.
Then, $T$ rectifies uniquely.
\end{proposition}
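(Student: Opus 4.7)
The plan is to proceed by strong induction on $|\lambda|$, the number of skewed-out cells of the shape $\nu/\lambda$. The base case $|\lambda|=0$ is immediate, since $T$ is then already of straight shape and is its own unique rectification.

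For the inductive step, let $U$ and $V$ be two rectifications of $T$. Each arises from a first slide: $U$ is a rectification of $\mathsf{Slide}_{\gamma_U}(T)$ and $V$ of $\mathsf{Slide}_{\gamma_V}(T)$, for some nonempty $\gamma_U, \gamma_V \subseteq \ic(\nu/\lambda)$. By the inductive hypothesis applied to these intermediate tableaux (which have strictly fewer skewed cells than $T$), $U$ and $V$ are their respective unique rectifications. It therefore suffices to show that for any two choices $\gamma_1, \gamma_2 \subseteq \ic(\nu/\lambda)$, the unique rectifications of $\mathsf{Slide}_{\gamma_1}(T)$ and $\mathsf{Slide}_{\gamma_2}(T)$ agree. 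Decomposing each $\gamma$ into a sequence of single-corner slides and applying a diamond-lemma reduction further reduces this to the following local commutativity statement: for any two distinct inner corners $c_1, c_2$ of $\nu/\lambda$, the unique rectifications of $\mathsf{Slide}_{c_2} \circ \mathsf{Slide}_{c_1}(T)$ and $\mathsf{Slide}_{c_1} \circ \mathsf{Slide}_{c_2}(T)$ coincide.

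This commutativity is verified by case analysis on the positions of $c_1$ and $c_2$ in the Hasse diagram of $\mathcal{D}(m,n,p)$. Since $\mathcal{D}(m,n,p)$ consists of four chains, namely the top tail ending at $t_n$, the bottom tail ending at $b_n$, the left chain ending at $\ell_m$, and the right chain ending at $r_p$, glued together at the central diamond $\{b_1, \ell_1, r_1, t_1\}$, the geometry is highly constrained. When $c_1$ and $c_2$ lie in positions whose possible slide paths are disjoint, the two slides act on independent parts of the poset and commute on the nose. The interesting cases occur when both slides interact with the diamond. The main obstacle I anticipate is the subcase in which labels originally placed on or above both $\ell_1$ and $r_1$ are moved through the diamond in different orders by the two slides. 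The key structural feature to be exploited here is that the presence of the full top tail above $t_1$ provides enough upward room so that the destination of any label pushed through the diamond is determined purely by relative magnitudes of adjacent labels, making the two slide orders produce the same final tableau.
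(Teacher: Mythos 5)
There is a genuine gap: the step ``decomposing each $\gamma$ into a sequence of single-corner slides'' is not valid in Thomas--Yong $K$-jeu de taquin, and this is precisely the subtlety that makes the $K$-theoretic theory harder than Sch\"utzenberger's classical theory. In general $\mathsf{Slide}_{\{c_1,c_2\}}(T)$ is \emph{not} equal to $\mathsf{Slide}_{\{c_1\}}\circ\mathsf{Slide}_{\{c_2\}}(T)$ in either order, because a simultaneous slide can duplicate a label while a sequence of single-corner slides cannot. This failure occurs in $\mathcal{D}(m,n,p)$ itself: if $T(t_1)=\min\mathrm{Range}(T)$ and $\ell_1,r_1$ are both inner corners, then $\mathsf{Slide}_{\{\ell_1,r_1\}}(T)$ places the minimum label at both $\ell_1$ and $r_1$ (the dots at $\ell_1$ and $r_1$ both absorb the value at $t_1$), whereas $\mathsf{Slide}_{\{\ell_1\},\{r_1\}}(T)$ moves it to $\ell_1$ only and leaves the right chain empty. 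These two intermediate tableaux even have different numbers of cells, so they are certainly not the same tableau; the unique rectification of $T$ must be proved to be the same for both, but that cannot be obtained by reducing to pairwise commutativity of single-corner slides.

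Your induction on $|\lambda|$ and the reduction to ``for any nonempty $\gamma_1,\gamma_2\subseteq\ic(\nu/\lambda)$, the unique rectifications of $\mathsf{Slide}_{\gamma_1}(T)$ and $\mathsf{Slide}_{\gamma_2}(T)$ coincide'' is a sound framework, and it is essentially what the paper does in its Case~1 ($T(t_1)\neq\min\mathrm{Range}(T)$), where the paper checks directly that $\mathsf{Slide}_{\{\ell_1\},\{r_1\}}(T)=\mathsf{Slide}_{\{r_1\},\{\ell_1\}}(T)=\mathsf{Slide}_{\{\ell_1,r_1\}}(T)$. But the case $T(t_1)=\min\mathrm{Range}(T)$, which you flag as ``the main obstacle,'' genuinely breaks the decomposition, and the remark about ``enough upward room above $t_1$'' does not engage with the problem: the duplication happens regardless of how long the top tail is. The paper instead handles this case by observing that the minimum label must land at $b_n$ in any rectification, then deletes $b_n$, and invokes the funnel-restriction result (Proposition~\ref{Theorem corresponding respects slides}) to reduce to the strictly smaller poset $\mathcal{D}(m,n-1,p)$. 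Some mechanism of this kind --- rather than local confluence of slides --- is needed to close the argument.
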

\begin{proof}
Suppose $T$ has shape $\nu/\lambda$.
If $\{\ell_1, r_1\} \not \subseteq \lambda$, then there are no choices to be made during rectification and hence $T$ rectifies uniquely. 
Thus, assume $\{\ell_1, r_1\} \subseteq \lambda$.
We can repeatedly perform slides on inner corners not equal to $\ell_1$ or $r_1$ because all the descendants of $\ell_1$ and $r_1$ are in disjoint chains and hence these slides clearly commute.
Thus, we may assume $T$ has exactly two inner corners $\ell_1$ and $r_1$. Write $I \coloneqq \{ \ell_1, r_1 \}$.
Let $s \in \mathbb{Z}$ be largest such that $t_s \in \nu$ (set $s=0$ if $t_1 \notin \nu$). 

We induct on $s$.
If $s=0$, then $\nu$ is a tree, and so $T$ rectifies uniquely by Corollary~\ref{Trees everything URT}.
Assume $s \geq 1$ and that the proposition holds for smaller $s$.

\medskip
\noindent
{\sf (Case 1: $T(t_1) \neq \min \range(T)$):}
Without loss of generality, we may assume $T(\ell_2) < T(t_1)$.
There are three possibilities for $\gamma \subseteq I$: either $\gamma = \{\ell_1\}$, $\gamma = \{r_1\}$, or $\gamma = \{\ell_1, r_1\}$. 
If we choose $\gamma = \{\ell_1\}$, then after $\mathsf{Slide}_{\{\ell_1\}}$ is completed, $r_1$ will be the unique inner corner; thus, the following slide is necessarily at $r_1$. 
Similarly, if we choose $\gamma = \{r_1\}$, the next slide is necessarily at $\ell_1$.

By routine case analysis, one checks that
\[\mathsf{Slide}_{\{\ell_1\}, \{r_1\}}(T) = \mathsf{Slide}_{\{r_1\}, \{\ell_1\}}(T) = \mathsf{Slide}_{\{r_1,\ell_1\}}(T)\]
in any of the various cases: $T(t_1) < T(r_2),T(t_1) = T(r_2),$ or $T(t_1) > T(r_2)$.
Thus, any choice made at this first step of rectifying $T$ yields the same tableau after one or two slides. That latter tableau has a unique rectification, as there are no further choices to be made. Hence, $T$ rectifies uniquely.

\medskip
\noindent
{\sf (Case 2: $T(t_1) = \min \range(T)$):}

\medskip
\noindent
{\sf (Case 2.1: $s=1$):} 
Then $T$ looks like the following.
\[
\begin{tikzpicture}[every node/.append style={circle, draw=black, inner sep=0pt, minimum size=20pt}, every draw/.append style={black, thick},node distance=1.25cm]
\node[circle] (t1) at (0,0) {$T(t_1)$};
\node[circle, below left of=t1] (l1) {};
\node[circle, below right of=t1] (r1) {};
\node[circle, below right of=l1] (b1) {};
\node[circle, below of=b1] (b2) {};
\node[draw=white,below of=b2] (bdots) {\vdots};
\node[circle, label=left:{$b_n$}, below of=bdots] (bn) {};
\node[circle, above left of=l1] (l2) {$T(\ell_2)$};
\node [circle,draw=white,above left of=l2,rotate=135] (ldots)  {\Large\ldots};
\node[circle, above left of=ldots] (lm) {$T(\ell_m)$};
\node[circle, above right of=r1] (r2) {$T(r_2)$};
\node [circle,draw=white, above right of=r2,rotate=45] (rdots)  {\Large\ldots};
\node[circle, above right of=rdots] (rp) {$T(r_p)$};

\draw [black,  thick] (t1) -- (l1);
\draw [black,  thick] (l2) -- (l1);
\draw [black,  thick] (l2) -- (ldots);
\draw [black,  thick] (ldots) -- (lm);
\draw [black,  thick] (t1) -- (r1);
\draw [black,  thick] (r2) -- (r1);
\draw [black,  thick] (rdots) -- (r2);
\draw [black,  thick] (rdots) -- (rp);
\draw [black,  thick] (b1) -- (r1);
\draw [black,  thick] (b1) -- (l1);
\draw [black,  thick] (b1) -- (b2);
\draw [black,  thick] (bdots) -- (b2);
\draw [black,  thick] (bdots) -- (bn);
\end{tikzpicture}
\]

Consider any rectifications $U$ and $V$ of $T$.
By Lemma~\ref{Lemma unique rectification in bottom chain}, for all $k$ we have $U(b_k) = V(b_k)$ is the $(n-k+1)$st smallest element of $\range(T)$. Thus, since $T(t_1) = \min \range(T)$ by assumption, we have $U(b_n)=T(t_1)$. Hence, $t_1 \notin \domain(R)$ and so $U$ looks like: 
\[
\begin{tikzpicture}[every node/.append style={circle, draw=black, inner sep=0pt, minimum size=20pt}, every draw/.append style={black, thick},node distance=1.25cm]
\node[circle] (b1) at (0,0) {$R(b_1)$};
\node[circle, above left of=b1] (l1) {$R(\ell_1)$};
\node[circle, above right of=b1] (r1) {$R(r_1)$};
\node[circle, below of=b1] (b2) {$R(b_2)$};
\node[draw=white,below of=b2] (bdots) {\Large{\vdots}};
\node[circle, label=left:{$b_n$}, below of=bdots] (bn) {$R(b_n)$};
\node[circle, above left of=l1] (l2) {$R(\ell_2)$};
\node [circle,draw=white,above left of=l2,rotate=135] (ldots)  {\Large\ldots};
\node[circle, above left of=ldots] (lm) {$R(\ell_m)$};
\node[circle, above right of=r1] (r2) {$R(r_2)$};
\node [circle,draw=white, above right of=r2,rotate=45] (rdots)  {\Large\ldots};
\node[circle, above right of=rdots] (rp) {$R(r_p)$};

\draw [black,  thick] (l2) -- (l1);
\draw [black,  thick] (l2) -- (ldots);
\draw [black,  thick] (ldots) -- (lm);
\draw [black,  thick] (r2) -- (r1);
\draw [black,  thick] (rdots) -- (r2);
\draw [black,  thick] (rdots) -- (rp);
\draw [black,  thick] (b1) -- (r1);
\draw [black,  thick] (b1) -- (l1);
\draw [black,  thick] (b1) -- (b2);
\draw [black,  thick] (bdots) -- (b2);
\draw [black,  thick] (bdots) -- (bn);
\end{tikzpicture}
\]
Consulting these pictures, one observes that 
\[U(\ell_1) = \min (\{T(\ell_i): 2 \leq i \leq m \text{ and } T(\ell_i) > U(b_1)\}\] and similarly for $V$.
Since $U(b_1) = V(b_1)$ by Lemma~\ref{Lemma unique rectification in bottom chain}, this means $U(\ell_1) = V(\ell_1)$. 
Clearly, the labels of $U$ in the left chain of $\mathcal{D}(m,n,p)$ are exactly the labels on the left chain of $\mathcal{D}(m,n,p)$ in $T$ that are at least $U(\ell_1)$ written in increasing order. Since the same is true for $V$, we have $U(\ell_q) = V(\ell_q)$ for all $q$. The same argument shows $U(r_q) = V(r_q)$.
Thus, $U=V$ and $T$ rectifies uniquely.

\medskip
\noindent
{\sf (Case 2.2: $s \geq 2$):}
Let $U,V$ be rectifications of $T$.
As in Case 2.1, we have $U(b_n) = V(b_n) = T(t_1)$.  
Let $\mathcal{Q} \coloneqq \mathcal{D}(m,n,p) \setminus \{b_n\}$. We must show $U|_\mathcal{Q} = V|_\mathcal{Q}$.  Since $n \geq s \geq 2$, $\mathcal{Q}$ has a minimum and is a funnel of $\mathcal{D}(m,n,p)$. Then by Proposition~\ref{Theorem corresponding respects slides}, $U|_\mathcal{Q}$ is a rectification of $S_U \coloneqq (T \to U)|_\mathcal{Q}$ and $V|_\mathcal{Q}$ is a rectification of $S_V \coloneqq (T \to V)|_\mathcal{Q}$.
Since $U(b_n)= V(b_n) = T(t_1)$, it follows from the definition of corresponding tableaux that $S_U = S_V$. In fact, $S_U$ and $S_V$ are merely $T$ restricted by deleting all labels of value $T(t_1)$. Hence, write $S \coloneqq S_U = S_V$. It remains to show that $S$ rectifies uniquely in $\mathcal{Q}$.

Since $S$ is $T$ restricted by deleting all labels of value $T(t_1)  = \min \range(T)$ and the inner corners of $T$ are exactly $\{\ell_1, r_1 \}$, it follows that the inner corners of $S$ are exactly those elements $q \in \mathcal{Q}$ with $T(q)=T(t_1)$. The structure of $\mathcal{Q}$ ensures that $\ell_2$ and $r_2$ are the only two nodes $q$ besides $t_1$ that could possibly have the label $T(t_1)$. Let $J \subseteq \{t_1, \ell_2, r_r\}$ be the set of inner corners of $S$.  Clearly, since the various slides only affect disjoint chains, for any set partitions $(\gamma_1, \dots, \gamma_h)$ and $(\delta_1, \dots, \delta_k)$ of $J$, we have \[\mathsf{Slide}_{\gamma_h} \circ \dots \circ \mathsf{Slide}_{\gamma_1}(T) = \mathsf{Slide}_{\delta_k} \circ \dots \circ \mathsf{Slide}_{\delta_1}(T).\] Hence, without loss of generality, we may assume that we perform $\mathsf{Slide}_{\{t_1\}}$ first. That is, set  $S' \coloneqq \mathsf{Slide}_{\{t_1\}}(S)$ and observe that $\mathsf{Rects}(S') = \mathsf{Rects}(S).$

Finally, we must show that $S'$ rectifies uniquely. Let the shape of $S'$ be $\eta/\theta$. Recall, $s$ is defined to be the largest integer with $t_s \in \domain(T)$, so by the construction of $S$, we also have that $s$ is the largest integer with $t_s \in \domain(S)$. Hence, since $S' \coloneqq \mathsf{Slide}_{\{t_1\}}(S)$, we have that $t_s \notin \domain(S')$. Since $s \leq n$, this ensures that $\eta$ is an order ideal of 
\[\mathcal{Q} \setminus \{t_n\}  = \mathcal{D}(m,n,p) \setminus \{b_n, t_n\} = \mathcal{D}(m,n-1,p),\]
where the first equality is by the definition of $\mathcal{Q}$ and the second equality follows from $n \geq s \geq 2$.
Hence, $S'$ is a skew increasing $\mathcal{D}(m,n-1,p)$-tableau. Moreover, the largest $i$ such that $S'(t_i)$ is defined is $s-1$, so by the inductive hypothesis, $S'$ rectifies uniquely in $\mathcal{D}(m,n-1,p)$. Thus, $S$ rectifies uniquely in $\mathcal{Q}$, and so $W|_\mathcal{Q}$ is the same for all rectifications $W$ of $T$, so $T$ rectifies uniquely.
\end{proof}

\begin{corollary}
\label{Corollary chained double tailed diamond URTs}
Every increasing $\mathcal{D}(m,n,p)$-tableau of straight shape is a URT.
\end{corollary}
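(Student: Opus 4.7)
The plan is to observe that this corollary is an immediate consequence of Proposition~\ref{Theorem chained double tailed diamond rectifies uniquely}. Recall that, by definition, an increasing tableau $U$ of straight shape is a URT precisely when every skew increasing tableau $T$ whose rectification set contains $U$ satisfies $|\mathsf{rects}(T)| = 1$.

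So first I would fix an arbitrary straight-shaped increasing $\mathcal{D}(m,n,p)$-tableau $U$ and an arbitrary skew increasing $\mathcal{D}(m,n,p)$-tableau $T$ with $U \in \mathsf{rects}(T)$. Then I would invoke Proposition~\ref{Theorem chained double tailed diamond rectifies uniquely} directly: it asserts that \emph{every} skew increasing $\mathcal{D}(m,n,p)$-tableau rectifies uniquely, so in particular $T$ does. Since $U$ was already in $\mathsf{rects}(T)$, we conclude $\mathsf{rects}(T) = \{U\}$. As $T$ was arbitrary, $U$ meets the URT criterion.

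There is no genuine obstacle in this deduction; all the nontrivial work has already been carried out in Proposition~\ref{Theorem chained double tailed diamond rectifies uniquely}, whose inductive case analysis on the height $s$ of the upper tail handled the subtle situations where choices of inner corner appear during rectification. The corollary simply repackages that statement in the language of URTs, which is the form needed for combination with the slant sum machinery of Section~\ref{Section slant sum} in the sequel.
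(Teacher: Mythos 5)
Your proof is correct and is exactly the same one-line deduction the paper makes (the paper's proof reads simply ``Immediate from Proposition~\ref{Theorem chained double tailed diamond rectifies uniquely}''). You have merely unfolded the definition of URT, which is fine.
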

\begin{proof}
Immediate from Proposition~\ref{Theorem chained double tailed diamond rectifies uniquely}.
\end{proof}

\begin{corollary}
\label{everything is chain URT in DTD}
Every increasing $\mathcal{D}(n)$-tableau of straight shape is an $\{\ell_1, r_1\}$-chain URT.
\end{corollary}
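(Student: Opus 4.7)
The plan is to deduce this directly from Corollary~\ref{Corollary chained double tailed diamond URTs}. The key observation is that, for any pairwise disjoint chains $\mathcal{C}_1$ and $\mathcal{C}_2$, the slant sum $\mathcal{D}(n) \slantsum{\ell_1} \mathcal{C}_1 \slantsum{r_1} \mathcal{C}_2$ is, up to the obvious relabeling, the chained double-tailed diamond $\mathcal{D}(|\mathcal{C}_1|+1,\, n-2,\, |\mathcal{C}_2|+1)$ of Figure~\ref{fig:chained_DTD}: the two middle elements of $\mathcal{D}(n)$ play the roles of $\ell_1$ and $r_1$; the top and bottom tails of $\mathcal{D}(n)$ play the roles of the $t_i$'s and $b_i$'s; and the slant-summed chains play the roles of the side chains $\ell_2,\dots,\ell_m$ and $r_2,\dots,r_p$. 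Analogously, $\mathcal{D}(n)$ itself is the chained double-tailed diamond $\mathcal{D}(1,\, n-2,\, 1)$.

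Let $U$ be an increasing $\mathcal{D}(n)$-tableau of straight shape. Since the slant-summed chains sit above $\ell_1$ and $r_1$, the support of $U$ is still an order ideal in $\mathcal{D}(n) \slantsum{\ell_1} \mathcal{C}_1 \slantsum{r_1} \mathcal{C}_2$, so $U$ may be viewed as a straight-shape increasing tableau in the chained double-tailed diamond identified above. Corollary~\ref{Corollary chained double tailed diamond URTs} then immediately gives that $U$ is a URT in $\mathcal{D}(n) \slantsum{\ell_1} \mathcal{C}_1 \slantsum{r_1} \mathcal{C}_2$. Applying the same observation to $\mathcal{D}(n) \cong \mathcal{D}(1,n-2,1)$ shows that $U$ is a URT in $\mathcal{D}(n)$ itself. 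By the definition of an $\{\ell_1,r_1\}$-chain URT, this completes the proof.

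No real obstacle arises: all the nontrivial combinatorics was already carried out in the inductive case analysis of Proposition~\ref{Theorem chained double tailed diamond rectifies uniquely}, and Corollary~\ref{everything is chain URT in DTD} is essentially a repackaging of its straight-shape consequence in the language of chain URTs.
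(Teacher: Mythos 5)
Your proof is correct and is essentially the paper's own argument: the paper states the corollary is ``Immediate from Corollary~\ref{Corollary chained double tailed diamond URTs},'' and the content of that immediacy is exactly your observation that slant summing disjoint chains onto $\ell_1$ and $r_1$ in $\mathcal{D}(n)$ produces another chained double-tailed diamond (and that $\mathcal{D}(n)$ itself is one), while the support of $U$ remains an order ideal after the slant sums. (As a side remark, your identification $\mathcal{D}(n)=\mathcal{D}(1,n-2,1)$ is the correct count; the paper's line ``$\mathcal{D}(k)=\mathcal{D}(1,k,1)$'' appears to be an off-by-two typo.)
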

\begin{proof}
Immediate from Corollary~\ref{Corollary chained double tailed diamond URTs}
\end{proof}

Proposition~\ref{Theorem chained double tailed diamond rectifies uniquely} is a special case of the following more general conjecture, for which we have some additional experimental evidence. (For the definition of `$d$-complete', see Section~\ref{Section $d$-complete posets}.)

\begin{conjecture}\label{conj:bottom_tree}
Let $\mathcal{P}$ be a $d$-complete poset with bottom tree $\mathcal{B}$. If $T$ is a skew increasing $\mathcal{P}$-tableau with rectifications $R$ and $S$, then we have $R|_\mathcal{B} = S|_\mathcal{B}$.
\end{conjecture}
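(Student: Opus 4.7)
The plan is to proceed by induction on $|\mathcal{B}|$, building on the argument for the bottom chain in Lemma~\ref{Lemma unique rectification in bottom chain}. The base case $|\mathcal{B}|=1$ is immediate: $\mathcal{B} = \{\hat{0}_\mathcal{P}\}$ and every rectification must place $\min\range(T)$ at $\hat{0}_\mathcal{P}$. For the inductive step, I would fix a linear extension $x_1=\hat{0}_\mathcal{P}, x_2, \ldots, x_k$ of $\mathcal{B}$ for which each prefix $\mathcal{B}_i = \{x_1, \ldots, x_i\}$ is an order ideal of $\mathcal{B}$ (and hence of $\mathcal{P}$). By induction, any two rectifications $R,S$ of $T$ satisfy $R|_{\mathcal{B}_{k-1}} = S|_{\mathcal{B}_{k-1}}$, so it suffices to show $R(x_k) = S(x_k)$.

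Since $\mathcal{B}$ is a tree, the element $x_k$ has a unique parent $x_j \in \mathcal{B}$. The principal order filter $\mathcal{F} = \{y \in \mathcal{P} : y \geq x_k\}$ is a funnel of $\mathcal{P}$ with $\hat{0}_\mathcal{F} = x_k$, so I would apply Proposition~\ref{Theorem corresponding respects slides} to $\mathcal{F}$: both $R|_\mathcal{F}$ and $S|_\mathcal{F}$ are rectifications of the corresponding skew tableaux $(T\to R)|_\mathcal{F}$ and $(T\to S)|_\mathcal{F}$. Proving $R(x_k) = S(x_k)$ is equivalent to showing these corresponding tableaux coincide.

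To pin down $R(x_k)$, my strategy would be to describe it as a canonical minimum: the smallest label of $T$ that exceeds $R(x_j) = S(x_j)$ (determined by induction) and that can be transported to $x_k$ by some sequence of slides. In the bottom chain case of Lemma~\ref{Lemma unique rectification in bottom chain}, this is simply the next-smallest element in the range of $T$. In the branching case, one must carefully identify the subset of labels of $T$ that can migrate into $x_k$ and rule out interference from sibling branches of $\mathcal{B}$ at $x_j$. The $d$-completeness of $\mathcal{P}$, specifically the diamond axioms controlling how cover relations combine above $x_j$, would be invoked here to force branches to behave independently, mirroring the dichotomy in Case~2 of the proof of Proposition~\ref{Theorem chained double tailed diamond rectifies uniquely}.

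The main obstacle I anticipate is formalizing this branch-independence. Even though $\mathcal{B}$ is a tree, the portion of $\mathcal{P}$ above $\mathcal{B}$ can in principle couple different branches during a rectification through shared slide paths; the $d$-complete structure presumably prevents this, but extracting the precise combinatorial consequence will require a detailed analysis of how slides propagate across the transition from tree-like to diamond-containing regions. It is likely that an auxiliary induction on Proctor's slant-sum decomposition of $\mathcal{P}$ into irreducible pieces is needed, reducing the branching-point analysis to a local computation in a double-tailed diamond, where Section~\ref{Section doubled tailed diamonds} already supplies the prototype.
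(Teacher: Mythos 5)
This statement is Conjecture~\ref{conj:bottom_tree} in the paper, and the paper does not prove it: the authors offer only ``experimental evidence,'' note that Proposition~\ref{Theorem chained double tailed diamond rectifies uniquely} is the special case of chained double-tailed diamonds, and cite \cite{Thomas.Yong:K} and \cite{Clifford.Thomas.Yong} for other special cases. So there is no ``paper's own proof'' to compare against, and a proof here would be a new result.

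Your sketch does not close the gap either, and it has a concrete error. You assert that the principal order filter $\mathcal{F} = \{y \in \mathcal{P} : y \geq x_k\}$ is a funnel with $\hat{0}_\mathcal{F} = x_k$. This is false in general, even when $x_k$ lies in the bottom tree of a $d$-complete poset. Consider $\mathcal{D}(3)$ with elements $b \lessdot \ell, r \lessdot t$; its bottom tree is $\{b, \ell, r\}$. Taking $x_k = \ell$, the filter $\mathcal{F} = \{\ell, t\}$ is not a funnel, since $r < t \in \mathcal{F}$ but $r \not< \ell = \hat{0}_\mathcal{F}$. Without the funnel property, Proposition~\ref{Theorem corresponding respects slides} cannot be invoked. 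Moreover, even where $\mathcal{F}$ is a funnel, your reduction is circular: since the corresponding tableaux $(T \to R)|_\mathcal{F}$ and $(T \to S)|_\mathcal{F}$ are defined precisely in terms of the values $R(x_k)$ and $S(x_k)$, showing they coincide is \emph{exactly} the claim $R(x_k) = S(x_k)$, not an independent lever on it. Finally, the step you flag as ``the main obstacle'' --- that $R(x_k)$ is the smallest transportable label and that this quantity is intrinsic, independent of the slide sequence --- is the entire content of the conjecture, and you give no argument for it. The rest of the sketch reduces to the observation that it would suffice to prove this; that is true, but is not progress.
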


Special cases of Conjecture~\ref{conj:bottom_tree} are key lemmas in \cite{Thomas.Yong:K} and \cite{Clifford.Thomas.Yong}. These lemmas have additional combinatorial applications \cite{Thomas.Yong:Plancherel,Pechenik:frames}; Conjecture~\ref{conj:bottom_tree} might have similar applications.

\section{$d$-complete posets and minuscule posets}
\label{Section $d$-complete posets}
In this section, we recall the definition of $d$-complete posets following \cite{Proctor:JACO}, and prove our main result Theorem~\ref{thm:main} regarding slant sum trees of minuscule posets. (We use, however, the convention of \cite{Proctor:algebra} regarding the orientation of our posets; the paper \cite{Proctor:JACO} uses the opposite convention, so the posets in \cite{Proctor:JACO} are the duals of ours.) The proofs presented in this section are all straightforward, relying on the technical results of the previous sections. We also develop appropriate terminology here to give precise interpretations of Conjectures~\ref{conj:URTs} and \ref{conj:geometry}.

If $x, y \in \mathcal{P}$, the {\bf interval} $[x, y]$ is the set $\{z \in \mathcal{P}: x \leq z \leq y\}$. We call an interval $[x, y]$ in $\mathcal{P}$ a {\bf $\mathcal{Q}$-interval} if it is isomorphic to the poset $\mathcal{Q}$.
We will be especially interested in $\mathcal{D}(k)$-intervals ($k \geq 3$). Let $\mathcal{D}_0(k) \coloneqq \mathcal{D}(k) \setminus \{t\}$, where $t$ is the minimal element of $\mathcal{D}(k)$. We will also be interested in $\mathcal{D}_0(k)$-intervals ($k \geq 4$). 
Examples of $\mathcal{D}_0(k)$-intervals are shown in Figure~\ref{fig:partial_DTD}; the corresponding posets $\mathcal{D}(k)$ are shown in Figure~\ref{fig:double-tailed}.

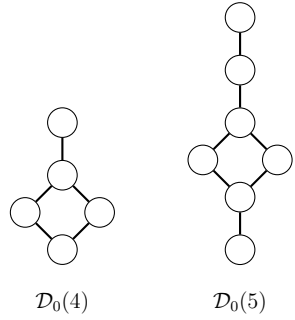
\begin{figure}[ht]
\begin{tikzpicture}[every node/.append style={circle, draw=black, inner sep=0pt, minimum size=16pt}, every draw/.append style={black, thick}]
    \node [circle] (top) at (0,0) {};
    \node [above of=top] (t1) {};
    \node [circle,below left  of=top] (left)  {};
    \node [circle,below right of=top] (right) {};
    \node [circle,below right of=left] (bottom) {};
    \node [draw=white,below  of=bottom] {$\mathcal{D}_0(4)$};
    \draw [black,  thick] (top) -- (left);
    \draw [black, thick] (top) -- (right);
    \draw [black, thick] (right) -- (bottom);
    \draw [black, thick] (left) -- (bottom);
    \draw [black, thick] (t1) -- (top);
\end{tikzpicture}
\qquad
\begin{tikzpicture}[every node/.append style={circle, draw=black, inner sep=0pt, minimum size=16pt}, every draw/.append style={black, thick}]
    \node [circle, above of =t1] (t2)  {};
    \node [circle, above of =top] (t1)  {};
    \node [circle] (top) at (0,0) {};
    \node [circle,below left  of=top] (left)  {};
    \node [circle,below right of=top] (right) {};
    \node [circle,below right of=left] (bottom) {};
    \node [circle,below of=bottom] (b1) {};
    \node [draw=white,below  of=b1] {$\mathcal{D}_0(5)$};
    \draw [black,  thick] (top) -- (t1);
    \draw [black,  thick] (top) -- (left);
    \draw [black, thick] (top) -- (right);
    \draw [black, thick] (right) -- (bottom);
    \draw [black, thick] (left) -- (bottom);
    \draw [black, thick] (b1) -- (bottom);
    \draw [black, thick] (t1) -- (t2);
\end{tikzpicture}
\caption{The Hasse diagrams of some small truncated double-tailed diamonds $\mathcal{D}_0(k)$.}\label{fig:partial_DTD}
\end{figure}

\begin{definition}
A poset $\mathcal{P}$ is {\bf $\mathcal{D}(3)$-complete} if it satisfies the following three conditions:
\begin{enumerate}
  \item anytime an element $z$ covers two distinct elements $x$ and $y$, there exists a fourth
element $w$ that $x$ and $y$ both cover;
  \item if $[w,z]$ is a $\mathcal{D}(3)$-interval in $\mathcal{P}$ with elements $\{w,x,y,z\}$, then $w$ is only covered by $x$ and $y$ in $\mathcal{P}$; and
  \item in such a $\mathcal{D}(3)$-interval, there is no $w' \neq w$ that both $x$ and $y$ cover.
\end{enumerate}
\end{definition}

Let $k \geq 4$. Suppose $[x, z]$ is a $\mathcal{D}_0(k)$-interval in which $y$ is the unique element with $y \gtrdot x$.
If there is no $w \in \mathcal{P}$ with $w \lessdot x$ such that $[w,z]$ is a $\mathcal{D}(k)$-interval, then $[x, z]$ is an {\bf incomplete} $\mathcal{D}_0(k)$-interval. If there exists $x' \neq x$ with $y \gtrdot x'$ such that $[x',z]$ is also a $\mathcal{D}_0(k)$-interval, then we say that $[x, z]$ and $[x', z]$ {\bf overlap}. 

\begin{definition}
For any $k \geq 4$, a poset $\mathcal{P}$ is {\bf $\mathcal{D}(k)$-complete} if it satisfies the following three conditions:
\begin{enumerate}
  \item there are no incomplete $\mathcal{D}_0(k)$-intervals;
  \item if $[w,z]$ is a $\mathcal{D}(k)$-interval, then $w$ is covered by only one element in $\mathcal{P}$; and
  \item there are no overlapping $\mathcal{D}_0(k)$-intervals.
\end{enumerate}
A poset $\mathcal{P}$ is {\bf $d$-complete} if it is $\mathcal{D}(k)$-complete for every $k \geq 3$.
\end{definition}

Briefly, the algebraic context of $d$-complete posets is as follows. (For further details, see \cite{Stembridge:FC,Proctor:algebra,CP12}.) Let $\Lambda$ be a dominant integral weight of a Kac-Moody Lie algebra $\mathfrak{g}$ with (generally infinite) Weyl group $W$. The Weyl group element $w \in W$ is called {\bf $\Lambda$-minuscule} if it can be written as a reduced word in the simple reflections as
\[
w = s_{i_1} s_{i_2} \cdots s_{i_\ell}, 
\]
so that for all $j$
\[(s_{i_{j+1}} \cdots s_{i_\ell} - s_{i_j} \cdots s_{i_\ell})\Lambda = \alpha_{i_j},
\]
where $\alpha_{i_j}$ is the simple root for $s_{i_j}$. (In fact, this property is independent of the choice of reduced word \cite[Proposition~2.1]{Stembridge:minuscule}.) Now, if $w$ is $\Lambda$-minuscule, then the interval $[{\rm id}, w]$ in Bruhat order is a distributive lattice. A poset $\mathcal{P}$ is $d$-complete if and only if it is isomorphic to the poset of join irreducibles of such a `$\Lambda$-minuscule distributive lattice'; equivalently, a poset $\mathcal{Q}$ is isomorphic to a Bruhat interval $[{\rm id}, w]$ for some $\Lambda$-minuscule $w$ if and only if $\mathcal{Q}$ is isomorphic to the poset of order ideals of a $d$-complete poset. Since Bruhat order on $W$ also describes containment of Schubert varieties in the Kac-Moody homogeneous space $X = G/B$, we have for $u,v \leq w$ all $\Lambda$-minuscule that the inclusion of Schubert varieties $X_u \subseteq X_v$ is equivalent to the reverse inclusion $\lambda_v \subseteq \lambda_u$ of the corresponding order ideals in the $d$-complete poset for $w$. In addition to their algebraic relations, $d$-complete posets enjoy a number of beautiful combinatorial properties, including an analogue of the classical hook-length formula (for a full proof of this fact, see \cite{Kim.Yoo}). Figure~\ref{fig:big_poset} shows an example of a reasonably large $d$-complete poset.

Say a $d$-complete poset is {\bf irreducible} if it is not the slant sum of two $d$-complete posets.
R.~Proctor \cite{Proctor:JACO} showed that all $d$-complete posets can be uniquely decomposed as a slant sum of irreducible $d$-complete components. In this decomposition, irreducible components are only slant summed onto special nodes of other irreducible components, called \emph{acyclic nodes} \cite{Proctor:JACO}; that is, if $\mathcal{P} = \mathcal{Q} \slantsum{q}\mathcal{R}$ is $d$-complete and $\mathcal{R}$ is irreducible, then $q$ is an acyclic node of its irreducible component. (We avoid giving the somewhat technical definition of acyclic nodes, as it is sufficient for our purposes to use Proctor's  explicit identification \cite{Proctor:JACO} of all acyclic nodes of all irreducible $d$-complete posets.) The irreducible $d$-complete posets are classified into $15$ (mostly infinite) families; we follow Proctor's numbering and naming conventions for these families from \cite{Proctor:JACO}. Of these $15$ families, only the components from families $1$--$9$ and $11$ have any acyclic nodes.

For a poset $\mathcal{P}$, we say an increasing $\mathcal{P}$-tableau $T$ of straight shape $\lambda \subseteq \mathcal{P}$ is {\bf minimally-labeled} if 
it is minimal among all increasing $\mathcal{P}$-tableaux of shape $\lambda$ under nodewise comparison of labels; that is, if $U$ is another increasing tableau of shape $\lambda$, then $U(x) \geq T(x)$ for all $x \in \lambda$. It is easy to see that there exists a unique minimally-labeled $\mathcal{P}$-tableau of each straight shape $\lambda$. We write $M_\lambda$ for this unique tableau. The precise version of Conjecture~\ref{conj:URTs} is the following.

\begin{conjecture}\label{conj:URT_precise}
Let $\mathcal{P}$ be $d$-complete and let $\lambda \subseteq \mathcal{P}$ be an order ideal. Then, the minimally-labeled increasing $\mathcal{P}$-tableau $M_\lambda$ of shape $\lambda$ is a unique rectification target.
\end{conjecture}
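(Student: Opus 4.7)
The plan is to leverage Proctor's decomposition of any $d$-complete poset as an iterated slant sum of irreducible $d$-complete components, with each non-initial component attached at an acyclic node of a previously-placed component. Working inductively along this decomposition, I would assemble chain URTs from the pieces using Proposition~\ref{slant sums with AB-chain URTs}. Crucially, that proposition demands the strictly stronger $p$-chain URT property at each attachment node, not merely the URT property; so the real content is to establish, for each irreducible $d$-complete $\mathcal{Q}$, every order ideal $\mu \subseteq \mathcal{Q}$, and every subset $A$ of acyclic nodes of $\mathcal{Q}$, that $M_\mu$ is an $A$-chain URT in $\mathcal{Q}$.

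A preliminary step is checking that the restriction of $M_\lambda$ to the order filter realized as an irreducible component is again minimally-labeled, up to an order-preserving relabeling, so that the inductive hypothesis applies uniformly. With this in hand, Proposition~\ref{slant sums with AB-chain URTs} reduces the full conjecture to the per-component statement. Two classes in Proctor's list are already covered by the machinery of Sections~\ref{Section adding to minimum and maximal elements}--\ref{Section doubled tailed diamonds}: ordinary chains and trees via Corollary~\ref{Trees everything URT}, and the double-tailed diamonds $\mathcal{D}(k)$ via Corollary~\ref{everything is chain URT in DTD}.

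For the minuscule families (the shapes arising from Grassmannians, Lagrangian and orthogonal Grassmannians, and the two Cayley--Moufang exceptional cases), Buch--Samuel established that $M_\mu$ is a URT. The desired upgrade to a chain URT at acyclic nodes constitutes Theorem~\ref{thm:main}. Since an acyclic node $p$ touches any slant-summed chain only through a single cover relation, one would re-examine the Buch--Samuel uniqueness argument with a single extra label seated just above $p$ and check that the forced inner-corner choices remain forced, in the spirit of the case analysis in Proposition~\ref{Theorem chained double tailed diamond rectifies uniquely}. The explicit combinatorics of minuscule shapes is what makes this analysis tractable.

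The main obstacle, and the reason the general statement remains a conjecture, lies in Proctor's non-minuscule irreducible families (his families $10$ and $12$--$15$), for which even the bare URT property is open. The local interactions of slides around the characteristic $\mathcal{D}(k)$-intervals in those families involve genuinely new phenomena absent in the minuscule setting, and the Buch--Samuel analysis does not transfer off the shelf. A natural intermediate target is Conjecture~\ref{conj:bottom_tree}: determining the bottom tree of every rectification would dramatically constrain the set of legal slide sequences and make the remaining case-by-case verification substantially more manageable.
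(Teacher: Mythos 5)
Since this statement is a \emph{conjecture} in the paper, there is no complete proof to compare against; the paper only lays out the reduction strategy after Conjecture~\ref{conj:URT_precise} and then proves the special case Theorem~\ref{Thm slant sum of minuscule URTs}. Your proposal correctly reproduces that reduction: decompose $\mathcal{P}$ into irreducible $d$-complete components via Proctor's slant-sum decomposition, note that each attachment happens at an acyclic node, and invoke Proposition~\ref{slant sums with AB-chain URTs} to assemble the pieces --- so that the genuine content is to show, per irreducible component, that minimally-labeled tableaux are $A$-chain URTs at the acyclic node set $A$ (or plain URTs when $A=\emptyset$). You also correctly flag that trees (Corollary~\ref{Trees everything URT}) and double-tailed diamonds (Corollary~\ref{everything is chain URT in DTD}) are already handled.

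Two points deserve correction. First, your plan to ``re-examine the Buch--Samuel uniqueness argument with a single extra label seated just above $p$'' is not what the paper does for rectangles and shifted staircases; instead it exploits the much cleaner observation in Lemma~\ref{irreducibles that make URTs into p-chain URTs} and Corollaries~\ref{Grassmainian chain URT},~\ref{large OG chain URT} that slant-summing a chain onto an acyclic node of a rectangle (resp.\ shifted staircase) yields an order ideal of a \emph{larger} rectangle (resp.\ shifted staircase), so Theorem~\ref{Minimally labeled URTs in Minuscules} applies verbatim with no need to revisit the Buch--Samuel argument internally. Second, your identification of the problematic families as ``10 and 12--15'' is incorrect: family 12 is the bat, which is minuscule and handled by Buch--Samuel. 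The actual obstruction is broader: the non-minuscule irreducible families 3, 5, 6, 7 (where Lemma~\ref{irreducibles that make URTs into p-chain URTs} reduces matters to the bare URT property, which is nonetheless still open there), families 9 and 11 (where even that reduction is not established), families 10, 13, 14, 15 (no acyclic nodes; bare URT open), and the non-minuscule members of families 4 and 8.
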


In light of the slant sum structure of $d$-complete posets, Conjecture~\ref{conj:URT_precise} would follow from Proposition~\ref{slant sums with AB-chain URTs} together with information about ($p$-chain) URTs in the $15$ families of irreducible $d$-complete posets. Specifically, it remains to show that
\begin{itemize}
\item for each irreducible $d$-complete poset $\mathcal{Q}$ with acyclic nodes, that $M_\lambda$ is a $p$-chain URT for each  order ideal $\lambda \subseteq \mathcal{Q}$ and each acyclic node $p \in \mathcal{Q}$, and that
\item for each irreducible $d$-complete poset $\mathcal{Q}$ without acyclic nodes, that $M_\lambda$ is a URT for each  order ideal $\lambda \subseteq \mathcal{Q}$.
\end{itemize}
Unfortunately, we are unable to establish the necessary results for some of these families; hence, we can only leverage 
Proposition~\ref{slant sums with AB-chain URTs} to prove a weaker version of Conjecture~\ref{conj:URT_precise}, namely Theorem~\ref{thm:main}. First, we recall the \emph{minuscule posets}, a special subset of $d$-complete posets. Except for some trivial instances, all minuscule posets are irreducible.

Algebraically, one obtains the minuscule posets as follows. Suppose the Kac-Moody group $G$ is in fact complex reductive. Put a partial order on the positive roots $\Phi^+$ of $G$ by taking the transitive closure of the covering relation $\alpha \lessdot \beta$ if and only if $\beta - \alpha$ is a simple root. The simple root $\delta$ is a {\bf minuscule root} if for every positive root $\alpha \in \Phi^+$, the multiplicity of $\delta^\vee$ in the simple coroot expansion of $\alpha^\vee$ is at most $1$. For each minuscule root, one obtains a corresponding {\bf minuscule poset} $\mathcal{P}_\delta$ by restricting the partial order on $\Phi^+$ to those positive roots that use $\delta$ in their simple root expansion. There is also a corresponding {\bf minuscule variety} obtained as the quotient $G / P_\delta$, where $P_\delta$ is the maximal parabolic subgroup associated to the minuscule root $\delta$. The minuscule poset $\mathcal{P}_\delta$ encodes the Schubert stratification of $G / P_\delta$; specifically, the Schubert varieties are naturally indexed by the order ideals of $\mathcal{P}_\delta$, and inclusions of order ideals correspond to reverse inclusions of Schubert varieties.

\begin{table}[ht]
\begin{center}
\begin{tabular} {|l|l|l|}
\hline
Minuscule poset & Minuscule variety & Irreducible $d$-complete classification \\
\hline
\hline
rectangle & Grassmanian & shapes (family 1) \\
\hline shifted staircase & orthogonal Grassmanian & shifted shapes (family 2) \\
\hline double-tailed diamond & quadric hypersurface & insets (family 4--special case)\\
\hline Cayley-Moufang swivel & octonion projective plane & swivels (family 8--special case)\\
\hline bat & Freudenthal variety & bat (family 12)\\
\hline
\end{tabular}
\end{center}
\caption{The $5$ families of minuscule posets are named in the first column. The second column identifies the corresponding minuscule homogeneous space. The third column shows how the minuscule posets fall into R.~Proctor's classification of irreducible $d$-complete posets from \cite{Proctor:JACO}.}
\label{table:minuscules}
\end{table}

Combinatorially, the minuscule posets are completely classified. Minuscule posets consist of three infinite families together with a pair of exceptional examples. This classification is given in Table~\ref{table:minuscules}, with examples shown in Figure~\ref{fig:minuscule}. One infinite family of minuscule posets is the {\bf rectangles}; combinatorially, these are the products $\mathcal{C}_i
\times \mathcal{C}_j$ of two chain posets. Another infinite family is the double-tailed diamonds studied in Section~\ref{Section doubled tailed diamonds}. The final infinite family is the {\bf shifted staircases}; identifying the chain $\mathcal{C}_i$ with the natural order on $\{ 1, \dots, i\}$, shifted staircases are of the form 
\[
\{ (x_1,x_2) \in \mathcal{C}_i \times \mathcal{C}_i : x_1 \geq x_2 \},
\]
with the order structure restricted from $\mathcal{C}_i \times \mathcal{C}_i$. For convenience, we will assume that shifted staircases have at least $10$ nodes, as the smaller shifted staircases coincide with small rectangles/double-tailed diamonds. Lastly, for the definitions of the exceptional {\bf Cayley-Moufang swivel} and {\bf bat}, see their Hasse diagrams depicted in the second row of Figure~\ref{fig:minuscule}. The acyclic nodes of the minuscule posets are also shown in Figure~\ref{fig:minuscule}; we will use the indexing of these nodes as $L$ and $R$, as in that figure.

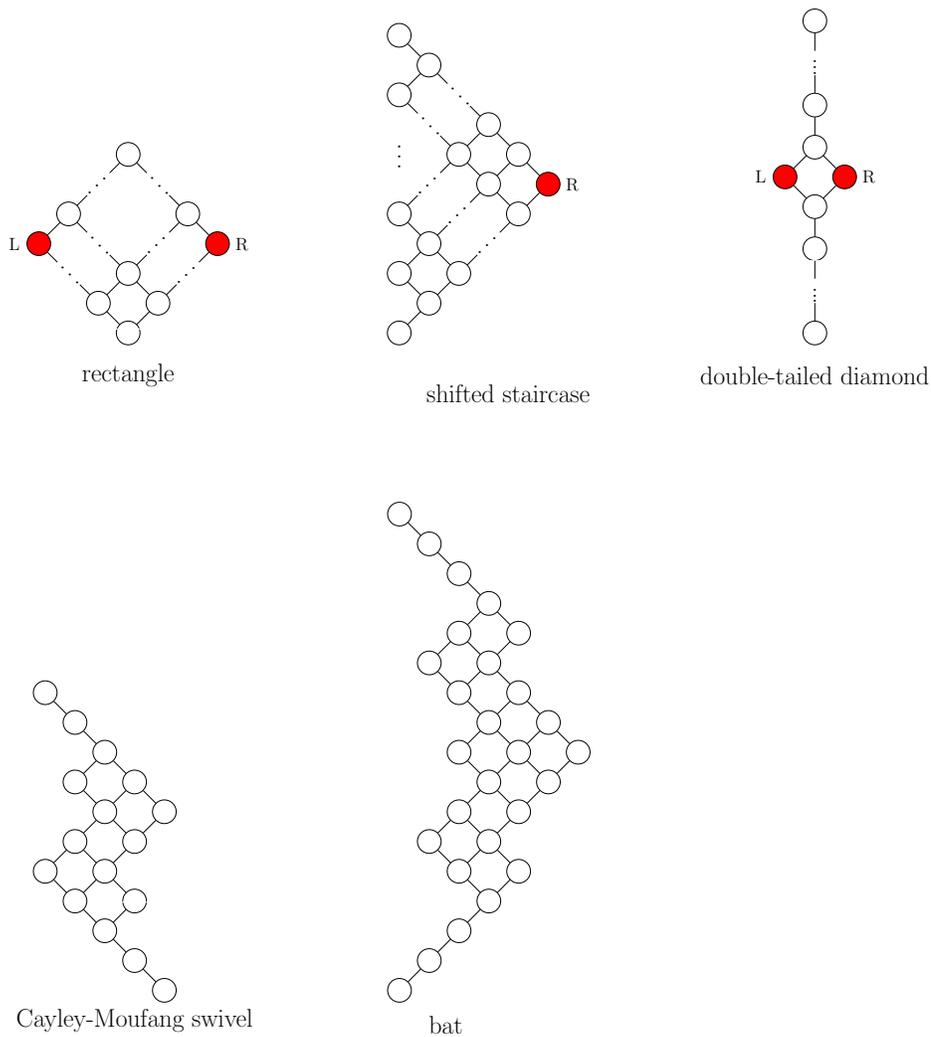
\begin{figure}[ht]
\begin{align*}
\Scale[0.8]{\begin{tikzpicture}[every node/.append style={circle, draw=black, inner sep=0pt, minimum size=16pt}, every draw/.append style={black, thick},anchor=base,baseline,]
\node (a1) at (0,0) {};
\node [above left of=a1] (a2) {};
\node [draw=white, above left of=a2,rotate=135] (a3) {\Large\ldots};
\node [fill=red,above left of=a3, label=left:{L}] (a4) {};
\node [above right of=a1](b1) {};
\node [above left of=b1] (b2) {};
\node [draw=white, above left of=b2,rotate=135] (b3) {\Large\ldots};
\node [above left of=b3] (b4) {};
\node [draw=white, above right of=b1, rotate = 45](c1) {\Large\ldots };
\node [draw=white,above left of=c1, rotate = 45] (c2) {\Large\ldots};
\node [draw=white, above left of=c2, rotate = 45] (c3) {};
\node [draw=white, above left of=c3, rotate = 45] (c4) {\Large\ldots};
\node [above right of=c1, fill=red, label= right:{R}](d1) {};
\node [above left of=d1] (d2) {};
\node [draw=white, above left of=d2,rotate=135] (d3) {\Large\ldots};
\node [above left of=d3] (d4) {};
\node [draw=white, below of= a1] {\Large rectangle};
\draw (a1)--(a2);
\draw (a2)--(a3);
\draw (a3)--(a4);
\draw (b1)--(b2);
\draw (b2)--(b3);
\draw (b3)--(b4);
\draw (d1)--(d2);
\draw (d2)--(d3);
\draw (d3)--(d4);
\draw (a1)--(b1);
\draw (a2)--(b2);
\draw (a4)--(b4);
\draw (c1)--(b1);
\draw (c2)--(b2);
\draw (c4)--(b4);
\draw (c1)--(d1);
\draw (c2)--(d2);
\draw (c4)--(d4);
\end{tikzpicture}}&&
\Scale[0.8]{\begin{tikzpicture}[every node/.append style={circle, draw=black, inner sep=0pt, minimum size=16pt}, every draw/.append style={black, thick},anchor=base,baseline,]
\node (a1) at (0,0) {};
\node [ above right of= a1] (b1) {};
\node [above right of= b1] (c1) {};
\node [draw=white,above right of= c1,rotate=45] (d1) {\Large\ldots};
\node [above right of= d1] (e1) {};
\node [fill=red,above right of= e1, label=right:{R}] (f1) {};
\node [fill=white,above left of= b1, label=left:{}] (b2) {};
\node [above left of= c1] (c2) {};
\node [above left of= c2] (c3) {};
\node [draw=white,above left of= d1,rotate=45] (d2) {\Large\ldots};
\node [draw=white,above left of= d2,rotate=45] (d3) {\Large\ldots};
\node [draw=white,above left of= d3,rotate=90] (d4) {\Large\ldots};
\node [above left of= e1] (e2) {};
\node [above left of= e2] (e3) {};
\node [draw=white,above left of= e3,rotate=135] (e4) {\Large\ldots};
\node [above left of= e4] (e5) {};
\node [above left of= f1] (f2) {};
\node [above left of= f2] (f3) {};
\node [draw=white,above left of= f3,rotate=135] (f4) {\Large\ldots};
\node [above left of= f4] (f5) {};
\node [above left of= f5] (f6) {};
\node [draw=white, below right = -0.1 and 0.7 of a1] {\Large shifted staircase};
\draw (a1) -- (b1);
\draw (b1) -- (c1);
\draw (c1) -- (d1);
\draw (d1) -- (e1);
\draw (e1) -- (f1);
\draw (b2) -- (c2);
\draw (c2) -- (d2);
\draw (d2) -- (e2);
\draw (e2) -- (f2);
\draw (c3) -- (d3);
\draw (d3) -- (e3);
\draw (e3) -- (f3);
\draw (e5) -- (f5);
\draw (b1) -- (b2);
\draw (c1) -- (c2);
\draw (c3) -- (c2);
\draw (e1) -- (e2);
\draw (e3) -- (e2);
\draw (e3) -- (e4);
\draw (e5) -- (e4);
\draw (f1) -- (f2);
\draw (f3) -- (f2);
\draw (f3) -- (f4);
\draw (f5) -- (f4);
\draw (f5) -- (f6);
\end{tikzpicture}}
&&
\Scale[0.8]{\begin{tikzpicture}[every node/.append style={circle, draw=black, inner sep=0pt, minimum size=16pt}, every draw/.append style={black, thick},anchor=base,baseline,]
    \node  (b2) at(0,0) {};
    \node [draw=white,above of =b2] (b1)  {\Large \vdots};
    \node [circle,above of=b1] (b0) {};
    \node [circle,above of=b0] (bottom) {};
    \node [circle,fill=red,above left of=bottom, label=left:{L}] (left)  {};
    \node [circle,fill=red,above right of=bottom, label=right:{R}] (right) {};
    \node [circle, above right of=left] (top) {};
    \node [circle,above of =top] (t0)  {};
    \node [draw=white,above of =t0] (t1)  {\Large \vdots};
    \node [circle, above of =t1] (t2)  {};
    \node [draw=white,below  of=b2] {\Large double-tailed diamond};
    \draw (t2) -- (t1);
    \draw  (t0) -- (t1);
    \draw (top) -- (t0);
    \draw (top) -- (left);
    \draw (top) -- (right);
    \draw  (right) -- (bottom);
    \draw (left) -- (bottom);
    \draw  (b0) -- (bottom);
    \draw  (b1) -- (b0);
    \draw (b1) -- (b2);
\end{tikzpicture}} \\
\Scale[0.8]{\begin{tikzpicture}[every node/.append style={circle, draw=black, inner sep=0pt, minimum size=16pt}, every draw/.append style={black, thick},anchor=base,baseline, ]
  \node (a1) at (0,0) {};
  \node [above left of= a1](a2)  {};
  \node [above left of= a2](a3)  {};
  \node [above left of= a3](a4)  {};
  \node [above left of= a4](a5)  {};
  \node [above right of= a3](b1)  {};
  \node [above left of= b1](b2)  {};
  \node [above left of= b2](b3)  {};
  \node [above right of= b2](c1)  {};
  \node [above left of= c1](c2)  {};
  \node [above left of= c2](c3)  {};
  \node [above right of= c1](d1)  {};
  \node [above left of= d1](d2)  {};
  \node [above left of= d2](d3)  {};
  \node [above left of= d3](d4)  {};
  \node [above left of= d4](d5)  {};
  \node [below left of=a1, draw=white] {\Large Cayley-Moufang swivel};
  \draw (a1)--(a2);
  \draw (a2)--(a3);
  \draw (a3)--(a4);
  \draw (a4)--(a5);
  \draw (b1)--(b2);
  \draw (b2)--(b3);
  \draw (c1)--(c2);
  \draw (c2)--(c3);
  \draw (d1)--(d2);
  \draw (d2)--(d3);
  \draw (d3)--(d4);
  \draw (d4)--(d5);
  \draw (a3)--(b1);
  \draw (a4)--(b2);
  \draw (a5)--(b3);
  \draw (c1)--(b2);
  \draw (c2)--(b3);
  \draw (c1)--(d1);
  \draw (c2)--(d2);
  \draw (c3)--(d3);
\end{tikzpicture}} &&
\Scale[0.8]{\begin{tikzpicture}[every node/.append style={circle, draw=black, inner sep=0pt, minimum size=16pt}, every draw/.append style={black, thick},anchor=base,baseline,]
  \node (a1) at (0,0) {};
  \node [above right of= a1](a2)  {};
  \node [above right of= a2](a3)  {};
  \node [above right of= a3](a4)  {};
  \node [above left of= a4](a5)  {};
  \node [above left of= a5](a6)  {};
  \node [above right of= a4](b1)  {};
  \node [above left of= b1](b2)  {};
  \node [above left of= b2](b3)  {};
  \node [above right of= b2](c1)  {};
  \node [above left of= c1](c2)  {};
  \node [above left of= c2](c3)  {};
  \node [above right of= c1](d1)  {};
  \node [above left of= d1](d2)  {};
  \node [above left of= d2](d3)  {};
  \node [above left of= d3](d4)  {};
  \node [above left of= d4](d5)  {};
  \node [above right of= d1](e1)  {};
  \node [above left of= e1](e2)  {};
  \node [above left of= e2](e3)  {};
  \node [above left of= e3](e4)  {};
  \node [above left of= e4](e5)  {};
  \node [above right of= e4](f1)  {};
  \node [above left of= f1](f2)  {};
  \node [above left of= f2](f3)  {};
  \node [above left of= f3](f4)  {};
  \node [above left of= f4](f5)  {};
  \node [below right = 0.2 and 0.4 of a1, draw=white] {\Large bat};
  \draw (a1)--(a2);
  \draw (a2)--(a3);
  \draw (a3)--(a4);
  \draw (a4)--(a5);
  \draw (a5)--(a6);
  \draw (b1)--(b2);
  \draw (b2)--(b3);
  \draw (c1)--(c2);
  \draw (c2)--(c3);
  \draw (d1)--(d2);
  \draw (d2)--(d3);
  \draw (d3)--(d4);
  \draw (d4)--(d5);
  \draw (e1)--(e2);
  \draw (e2)--(e3);
  \draw (e3)--(e4);
  \draw (e4)--(e5);
  \draw (f1)--(f2);
  \draw (f2)--(f3);
  \draw (f3)--(f4);
  \draw (f4)--(f5);
  \draw (a4)--(b1);
  \draw (a5)--(b2);
  \draw (a6)--(b3);
  \draw (c1)--(b2);
  \draw (c2)--(b3);
  \draw (c1)--(d1);
  \draw (c2)--(d2);
  \draw (c3)--(d3);
  \draw (e1)--(d1);
  \draw (e2)--(d2);
  \draw (e3)--(d3);
  \draw (e4)--(d4);
  \draw (e5)--(d5);
  \draw (e4)--(f1);
  \draw (e5)--(f2);
\end{tikzpicture}}
\end{align*}
\caption{Examples of the $5$ families of minuscule posets. The labeled red nodes mark the acyclic nodes  of these posets. The exceptional posets of the bottom row have no acyclic nodes.}
\label{fig:minuscule}
\end{figure}

We will only use the following proposition in the case $k=1$ of rectangles; however, for possible future use, we note that it is equally true for four of Proctor's other families: \emph{birds} (family 3), \emph{tailed insets} (family 5), \emph{banners} (family 6), and \emph{nooks} (family 7).

\begin{lemma}
\label{irreducibles that make URTs into p-chain URTs}
Let $k \in \{1,3,5,6,7\}$.
Let $\mathcal{P}$ be an irreducible $d$-complete poset from family $k$ and let $A \subseteq \mathcal{P}$ be the set of all acyclic nodes in $\mathcal{P}$. 
If a straight-shaped increasing $\mathcal{P}$-tableau $U$ is a URT for all posets in family $k$, then $U$ is a $A$-chain URT in all such posets.
\end{lemma}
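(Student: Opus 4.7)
The plan is to reduce the lemma to a closure property of Proctor's families $1, 3, 5, 6, 7$ under slant-summing chains at acyclic nodes. Specifically, I would establish that for each such family $k$, whenever $\mathcal{P}$ lies in family $k$ with acyclic node set $A$, whenever $a_1, \dots, a_n \in A$ (not necessarily distinct), and whenever $\mathcal{C}_1, \dots, \mathcal{C}_n$ are pairwise disjoint chains, the resulting poset
\[
\mathcal{P}' \coloneqq \mathcal{P} \slantsum{a_1} \mathcal{C}_1 \slantsum{a_2} \cdots \slantsum{a_n} \mathcal{C}_n
\]
again belongs to family $k$. Granted this closure, the lemma is immediate: the underlying shape of $U$ is an order ideal of $\mathcal{P}$ and hence still an order ideal of $\mathcal{P}'$, so the hypothesis applied to $\mathcal{P}'$ (which is itself a member of family $k$) asserts that $U$ is a URT in $\mathcal{P}'$. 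Since this holds for every choice of acyclic nodes and attached chains, $U$ is by definition an $A$-chain URT in $\mathcal{P}$.

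First I would fix notation by recalling from \cite{Proctor:JACO} the explicit parameterization of each of the relevant families, together with the precise location of their acyclic nodes. For family $1$ (shapes), acyclic nodes appear at the top ends of the two extreme chains bordering the Young diagram, so slant-summing a chain at such a node merely extends the corresponding arm and produces another (generally larger) shape in family $1$. The situation is structurally analogous for families $3$ (birds), $5$ (tailed insets), $6$ (banners), and $7$ (nooks): in each case the acyclic nodes sit at the top of a specific chain whose length is one of the adjustable parameters of the family, so attaching an additional chain there simply increments that parameter.

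The main obstacle is verifying this closure property uniformly but rigorously for each family, since the definitions in Proctor's classification impose various parameter constraints (minimum chain lengths, inequalities between arm lengths, compatibility between top and bottom tails, etc.) that must be checked not to be violated by the slant-sum operation. In each case the verification is case analysis: because the acyclic nodes are chosen as exactly those positions whose elongation preserves $d$-completeness and keeps the configuration inside the family, no new $\mathcal{D}(k)$- or $\mathcal{D}_0(k)$-intervals appear to disrupt the defining conditions. The checks are routine but must be carried out family-by-family. Once complete, the lemma follows as described above without any further rectification-theoretic input.
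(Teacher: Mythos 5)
Your reduction hinges on the claim that each of Proctor's families $1,3,5,6,7$ is closed under slant-summing chains at acyclic nodes, i.e.\ that $\mathcal{P}' = \mathcal{P} \slantsum{a_1}\mathcal{C}_1 \slantsum{a_2}\cdots\slantsum{a_n}\mathcal{C}_n$ is again a member of family $k$. This claim is false, and the failure is structural rather than a matter of checking parameter constraints. By the paper's definition, a $d$-complete poset is \emph{irreducible} precisely when it is not a slant sum of two $d$-complete posets, and the $15$ Proctor families classify the irreducible posets. But $\mathcal{P}'$ is, by construction, a slant sum of $\mathcal{P}$ with a chain (both $d$-complete), so $\mathcal{P}'$ is reducible and therefore lies in none of the $15$ families. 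Concretely, take $\mathcal{P} = \mathcal{C}_2 \times \mathcal{C}_2$ (family $1$), $a$ one of its two middle nodes, and $\mathcal{C}$ a singleton chain: $\mathcal{P}\slantsum{a}\mathcal{C}$ is a $5$-element shape such as $(2,2,1)$, which manifestly decomposes as a slant sum and is not an irreducible shape.

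The observation you actually need is weaker: $\mathcal{P}'$ is an \emph{order ideal of} a larger poset in family $k$, not a member of family $k$ itself. For instance, the rectangle $\mathcal{C}_m\times\mathcal{C}_n$ with chains of sizes $i_1,i_2$ attached at its two acyclic corners is an order ideal of the larger rectangle $\mathcal{C}_{m+i_1}\times\mathcal{C}_{n+i_2}$. This is precisely the route the paper's proof takes. Note that this weaker observation costs you an additional step that your argument silently skips: one must then pass from ``$U$ is a URT in $\mathcal{Q}$'' (where $\mathcal{Q}$ is the larger poset in family $k$) to ``$U$ is a URT in $\mathcal{P}'$'' using the fact that URTs restrict along order ideals (rectification inside an order ideal $\mathcal{P}' \subseteq \mathcal{Q}$ never touches $\mathcal{Q}\setminus\mathcal{P}'$, so the rectification sets agree). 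With the closure claim replaced by the order-ideal observation and this extra restriction step supplied, your outline becomes the paper's proof.
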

\begin{proof}
Suppose $A = \{a_1, ..., a_k\}$. Let $i_1, ..., i_k$ be arbitrary positive integers.
Let $\mathcal{R}$ be the iterated slant sum $\mathcal{P} \slantsum{a_1} \mathcal{C}_{i_1} \slantsum{a_2} \dots \slantsum{a_k} \mathcal{C}_{i_k}$ of $\mathcal{P}$ with a collection of chains. Observe that $\mathcal{R}$ is an order ideal of a  larger poset in the same irreducible family. Thus, $U$ is a URT in $\mathcal{R}$, as desired.
\end{proof}

\begin{theorem}[\cite{BS16}]
\label{Minimally labeled URTs in Minuscules}
Let $\mathcal{P}$ be a minuscule poset. Then, for every order ideal $\lambda \subseteq \mathcal{P}$, the minimally-labeled increasing $\mathcal{P}$-tableau $M_\lambda$ of shape $\lambda$ is a URT in $\mathcal{P}$. \qed
\end{theorem}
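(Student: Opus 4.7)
Since minuscule posets fall into the five families listed in Table~\ref{table:minuscules}---rectangles, shifted staircases, double-tailed diamonds, the Cayley-Moufang swivel, and the bat---the plan is to verify the URT property for $M_\lambda$ case-by-case over these families, leveraging earlier work where possible and the technology of this paper elsewhere.

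The double-tailed diamond case is essentially free: Corollary~\ref{Corollary chained double tailed diamond URTs} establishes the stronger statement that \emph{every} straight-shaped increasing tableau on $\mathcal{D}(k)$ is a URT, which a fortiori handles $M_\lambda$ for every order ideal $\lambda$. The two exceptional posets---the Cayley-Moufang swivel (15 elements) and the bat (28 elements)---each admit only finitely many order ideals, and for each such $\lambda$ the tableau $M_\lambda$ has bounded entries (by the length of a longest chain in $\mathcal{P}$). Consequently the URT property for these posets reduces to a finite verification: enumerate all skew increasing tableaux that could possibly rectify to $M_\lambda$ and check that each has a unique rectification. This is feasible by exhaustive computer search.

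The substantive work lies in the two remaining infinite families. For rectangles, the minimally-labeled tableau $M_\lambda$ has the form where $M_\lambda(i,j) = j$ (taking the rectangle in its usual coordinates), i.e., it is the ``superstandard'' increasing tableau. The URT property in this case is the K-theoretic jeu de taquin result underlying Thomas-Yong's Littlewood-Richardson rule for Grassmannians \cite{Thomas.Yong:K}, and a self-contained argument proceeds by induction on $|\lambda|$: Lemma~\ref{Lemma unique rectification in bottom chain} forces the bottom row of any rectification, after which one applies Proposition~\ref{Theorem corresponding respects slides} to the funnel obtained by deleting the bottom row, reducing to a smaller rectangle. For shifted staircases, the same inductive strategy applies, now citing the corresponding jeu de taquin theory of \cite{Clifford.Thomas.Yong} as the base of the induction, or more directly observing that the key identity $M_\lambda(x) = $ (longest chain in $\lambda$ ending at $x$) behaves well under the funnel restriction machinery of Section~\ref{Section adding to minimum and maximal elements}.

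The main obstacle is the absence of a uniform argument valid across all five families: the combinatorial structure of each minuscule poset is sufficiently different that each case must be treated on its own, and in particular the exceptional cases seem to require either computer verification or ad hoc combinatorial analysis. This is precisely the approach of \cite[\textsection 3.5]{BS16}, and so in practice we simply invoke that reference for the statement.
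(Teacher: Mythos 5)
The paper proves nothing here---Theorem~\ref{Minimally labeled URTs in Minuscules} is attributed to \cite{BS16} and the $\square$ marks a citation, not an argument. Your closing sentence falls back on exactly that citation, so on the essential point you and the paper agree.

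However, the ``self-contained argument'' you sketch for rectangles has a genuine gap and should not be offered as an alternative. For a rectangle $\mathcal{C}_i \times \mathcal{C}_j$ with $i,j \geq 2$, the bottom chain of Section~\ref{Section adding to minimum and maximal elements} is only the singleton $\{\hat{0}\}$: after removing $\hat{0}$ there are two incomparable minimal elements, so $\min(\mathcal{P}\setminus C_0)=\emptyset$ and the recursion terminates immediately. Hence Lemma~\ref{Lemma unique rectification in bottom chain} pins down only the label of $\hat{0}$, not ``the bottom row.'' Worse, the order filter obtained by deleting the bottom row is \emph{not} a funnel: in $\mathcal{C}_2 \times \mathcal{C}_2$ with elements $(a,b)$ and minimum $(1,1)$, deleting the row $a=1$ leaves $\mathcal{F}=\{(2,1),(2,2)\}$ with $\hat{0}_{\mathcal{F}}=(2,1)$, yet $(1,2)\notin\mathcal{F}$ satisfies $(1,2)<(2,2)$ while $(1,2)\not<(2,1)$, violating the funnel condition. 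So Proposition~\ref{Theorem corresponding respects slides} does not apply and the proposed induction never gets started; the same obstruction affects your parallel remark for shifted staircases, whose bottom chain has size $2$. The funnel and slant-sum machinery of Sections~\ref{Section adding to minimum and maximal elements}--\ref{Section slant sum} is designed to propagate URT information across slant-sum decompositions, not to peel rows off an irreducible minuscule poset---which is precisely why the paper imports the minuscule case wholesale from \cite{BS16} (where the $K$-theoretic jeu de taquin theory of \cite{Thomas.Yong:K,Clifford.Thomas.Yong} does the work) rather than rederiving it. (Incidentally, the Cayley--Moufang swivel has $16$ elements and the bat $27$, not $15$ and $28$, though this is cosmetic.)
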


\begin{corollary}
\label{Grassmainian chain URT}
Let $\mathcal{P}$ be a rectangle.
Let $M_\lambda$ be an minimally-labeled $\mathcal{P}$-tableau of straight shape.
Then, $M_\lambda$ is an $\{L,R\}$-chain URT in $\mathcal{P}$.
\end{corollary}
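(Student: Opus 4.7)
The plan is to combine Theorem~\ref{Minimally labeled URTs in Minuscules} with Lemma~\ref{irreducibles that make URTs into p-chain URTs} in the case $k=1$. Rectangles form family~1 in Proctor's classification, and from Figure~\ref{fig:minuscule} we read off that the set of acyclic nodes of any rectangle is exactly $\{L, R\}$. Hence Corollary~\ref{Grassmainian chain URT} follows as soon as we verify the hypothesis of Lemma~\ref{irreducibles that make URTs into p-chain URTs}: that $M_\lambda$ is a URT in every rectangle containing $\lambda$ as an order ideal.

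First, I would observe that slant-summing arbitrary chains $\mathcal{C}_L$ and $\mathcal{C}_R$ onto the acyclic nodes $L$ and $R$ of an $a \times b$ rectangle $\mathcal{P}$ simply extends one column and one row, yielding a larger rectangle $\mathcal{R}$. In particular $\mathcal{P}$ sits inside $\mathcal{R}$ as an order ideal, and $\lambda \subseteq \mathcal{P} \subseteq \mathcal{R}$ remains a straight shape of $\mathcal{R}$. This is the closure-under-slant-sum property that makes Lemma~\ref{irreducibles that make URTs into p-chain URTs} applicable to family~1.

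Second, I would note that the minimally-labeled tableau $M_\lambda$ depends only on the intrinsic poset structure of $\lambda$ and not on the ambient poset. Therefore, as an increasing $\mathcal{R}$-tableau of shape $\lambda$, $M_\lambda$ is still the minimally-labeled tableau of that shape. Since $\mathcal{R}$ is itself a (larger) rectangle, and hence a minuscule poset, Theorem~\ref{Minimally labeled URTs in Minuscules} guarantees that $M_\lambda$ is a URT in $\mathcal{R}$. This confirms the hypothesis of Lemma~\ref{irreducibles that make URTs into p-chain URTs} and yields the conclusion that $M_\lambda$ is an $\{L,R\}$-chain URT in $\mathcal{P}$.

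There is no real obstacle here: the corollary is a direct packaging of Theorem~\ref{Minimally labeled URTs in Minuscules} and Lemma~\ref{irreducibles that make URTs into p-chain URTs}, with the only content being the elementary geometric observation that the rectangle family is closed under slant-summing chains onto its two acyclic nodes.
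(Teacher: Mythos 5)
Your proof matches the paper's approach (cite Theorem~\ref{Minimally labeled URTs in Minuscules} together with Lemma~\ref{irreducibles that make URTs into p-chain URTs} for family~$1$), and the overall logic is correct. One imprecision to correct in your gloss on Lemma~\ref{irreducibles that make URTs into p-chain URTs}: the slant sum $\mathcal{R}=\mathcal{P}\slantsum{L}\mathcal{C}_L\slantsum{R}\mathcal{C}_R$ is \emph{not} itself a rectangle---it is the rectangle $\mathcal{P}$ with two chains dangling off the two corners $L$ and $R$, which is not a product of two chains (indeed not even a lattice in general), so you cannot apply Theorem~\ref{Minimally labeled URTs in Minuscules} to $\mathcal{R}$ directly. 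What is true, and what the proof of Lemma~\ref{irreducibles that make URTs into p-chain URTs} actually uses, is that $\mathcal{R}$ is an \emph{order ideal} of a strictly larger rectangle $\mathcal{R}'$; then $M_\lambda$ is minimally labeled as an $\mathcal{R}'$-tableau, hence a URT in $\mathcal{R}'$ by Theorem~\ref{Minimally labeled URTs in Minuscules}, and hence a URT in $\mathcal{R}$ because $\mathcal{R}$ is an order ideal of $\mathcal{R}'$. Since you explicitly invoke the lemma (whose statement and proof handle this order-ideal step correctly), the conclusion is unaffected; just replace the claim ``yielding a larger rectangle $\mathcal{R}$'' with ``yielding an order ideal of a larger rectangle.''
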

\begin{proof}
This follows from Lemma~\ref{irreducibles that make URTs into p-chain URTs} and Theorem~\ref{Minimally labeled URTs in Minuscules}.
\end{proof}

\begin{corollary}
\label{large OG chain URT}
Let $\mathcal{P}$ be a shifted staircase with at least $10$ nodes.
If $M_\lambda$ is a minimally-labeled $\mathcal{P}$-tableau of straight shape,
then $M_\lambda$ is an $\{R\}$-chain URT in $\mathcal{P}$.
\end{corollary}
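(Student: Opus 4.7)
The plan is to adapt the strategy behind Lemma~\ref{irreducibles that make URTs into p-chain URTs} and its application in Corollary~\ref{Grassmainian chain URT}. Concretely, for any chain poset $\mathcal{C}$, I would show that the slant sum $\mathcal{P} \slantsum{R} \mathcal{C}$ is itself isomorphic to an order ideal of some larger shifted staircase $\mathcal{P}'$ in family 2. This is the main combinatorial claim to verify.

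Once that embedding is established, the rest of the argument is formal. Since $\lambda \subseteq \mathcal{P} \subseteq \mathcal{P}'$ and the minimum possible labels at each node of $\lambda$ depend only on the induced structure of $\lambda$ itself, the tableau $M_\lambda$ is well-defined regardless of the ambient shifted staircase. Theorem~\ref{Minimally labeled URTs in Minuscules} then gives that $M_\lambda$ is a URT in $\mathcal{P}'$. Because jeu de taquin slides only propagate downward along cover relations, the outer shape of any partial rectification starting inside the order ideal $\mathcal{P} \slantsum{R} \mathcal{C}$ remains inside $\mathcal{P} \slantsum{R} \mathcal{C}$; hence every rectification of a skew increasing tableau supported in $\mathcal{P} \slantsum{R} \mathcal{C}$ coincides with its rectification in $\mathcal{P}'$. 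The URT property therefore transfers from $\mathcal{P}'$ to $\mathcal{P} \slantsum{R} \mathcal{C}$, and since $\mathcal{C}$ was arbitrary we conclude that $M_\lambda$ is an $\{R\}$-chain URT in $\mathcal{P}$.

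The main obstacle will be justifying the embedding claim. The acyclic node $R$ sits at a specific corner of the shifted Young diagram associated to $\mathcal{P}$; intuitively, attaching a chain at $R$ corresponds to extending the diagram along its boundary in a direction that fits naturally inside a suitably enlarged shifted staircase. The hypothesis that $\mathcal{P}$ has at least $10$ nodes (that is, at least $4$ diagonal elements) is what guarantees that the local Hasse structure near $R$ in $\mathcal{P}$ matches that of a genuine larger shifted staircase, and that no ambiguity arises from isomorphisms with other families. Indeed, smaller shifted staircases coincide with members of other families of irreducible $d$-complete posets (for instance, the $6$-node shifted staircase is isomorphic to a double-tailed diamond), and their acyclic node structure is different, which is why those cases must be excluded and are handled separately by Corollary~\ref{Grassmainian chain URT} and Corollary~\ref{everything is chain URT in DTD}. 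Verifying the embedding thus reduces to a careful local check of the Hasse diagram of the shifted staircase near $R$, after which the corollary follows immediately from Theorem~\ref{Minimally labeled URTs in Minuscules} by the formal reasoning above.
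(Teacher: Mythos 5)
Your proposal is correct and takes essentially the same approach as the paper: you observe that $\mathcal{P} \slantsum{R} \mathcal{C}$ embeds as an order ideal of a larger shifted staircase, invoke Theorem~\ref{Minimally labeled URTs in Minuscules} to get the URT property in that larger poset, and note that the URT property passes to order ideals. The paper states this in one sentence without spelling out the order-ideal transfer or the role of the $10$-node hypothesis, but both of those points you fill in are correct and consistent with the paper's reasoning elsewhere (e.g., the proof of Corollary~\ref{Cor slant sum of minuscule ideals URTs} and the remark that small shifted staircases coincide with rectangles or double-tailed diamonds).
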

\begin{proof}
If $\mathcal{S}$ is the slant sum $\mathcal{P} \slantsum{R} \mathcal{C}_j$ of $\mathcal{P}$ with a chain, then $\mathcal{S}$ is an order ideal of a larger shifted staircase, in which minimally-labeled tableaux are URTs by Theorem~\ref{Minimally labeled URTs in Minuscules}.
\end{proof}

In order to state the following, we adopt the convention that an \emph{$\emptyset$-chain URT} in $\mathcal{P}$ is just a URT in $\mathcal{P}$.

\begin{proposition}
\label{A chain URTs for minuscules}
Let $\mathcal{P}$ be a minuscule poset. Let $A$ be the set of acyclic nodes in  $\mathcal{P}$.
Let $M_\lambda$ be a minimally-labeled increasing $\mathcal{P}$-tableau of straight shape.
Then, $M_\lambda$ is an $A$-chain URT in $\mathcal{P}$.
\end{proposition}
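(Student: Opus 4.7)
The plan is to proceed by case analysis over the five families of minuscule posets listed in Table~\ref{table:minuscules} and depicted in Figure~\ref{fig:minuscule}. In each family, the acyclic nodes $A$ are already identified (or noted to be absent) in Figure~\ref{fig:minuscule}, so the argument will amount to matching each family to the appropriate previously-established result rather than doing any new technical work.

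For the two exceptional minuscule posets, the Cayley-Moufang swivel and the bat, Figure~\ref{fig:minuscule} records that there are no acyclic nodes, so $A = \emptyset$. By the convention adopted just before the proposition, an $\emptyset$-chain URT is simply a URT, so these two cases follow immediately from Theorem~\ref{Minimally labeled URTs in Minuscules}.

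For the three infinite families, the required $A$-chain URT property has already been extracted in this paper. If $\mathcal{P}$ is a rectangle, then $A = \{L, R\}$ and the claim is exactly Corollary~\ref{Grassmainian chain URT}. If $\mathcal{P}$ is a shifted staircase (with at least $10$ nodes, per the convention adopted in this section), then $A = \{R\}$ and the claim is exactly Corollary~\ref{large OG chain URT}. If $\mathcal{P}$ is a double-tailed diamond $\mathcal{D}(k)$, then $A = \{L, R\} = \{\ell_1, r_1\}$, and Corollary~\ref{everything is chain URT in DTD} yields the claim (indeed it gives the stronger statement that \emph{every} straight-shaped increasing $\mathcal{D}(k)$-tableau, and hence in particular $M_\lambda$, is an $\{\ell_1,r_1\}$-chain URT). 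The handful of shifted staircases with fewer than $10$ nodes need no separate treatment, since by the convention in Section~\ref{Section $d$-complete posets} they coincide with small rectangles or double-tailed diamonds and are absorbed into those cases.

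There is no substantive obstacle in this proposition itself; the real labor was done upstream. In particular, the chain-URT strengthenings for rectangles and shifted staircases in Corollaries~\ref{Grassmainian chain URT} and~\ref{large OG chain URT} rely on the observation (Lemma~\ref{irreducibles that make URTs into p-chain URTs}) that slant-summing chains onto acyclic nodes keeps one inside the same infinite family, combined with the URT result of \cite{BS16} imported as Theorem~\ref{Minimally labeled URTs in Minuscules}. The double-tailed diamond case rests on the explicit chain-URT analysis carried out in Section~\ref{Section doubled tailed diamonds}. Thus the proposition is proved by a single dispatch over the classification of minuscule posets.
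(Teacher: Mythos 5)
Your proposal is correct and follows essentially the same case analysis as the paper's proof: exceptional posets reduce to the URT result of Theorem~\ref{Minimally labeled URTs in Minuscules} via the $A=\emptyset$ convention, rectangles use Corollary~\ref{Grassmainian chain URT}, shifted staircases use Corollary~\ref{large OG chain URT}, and double-tailed diamonds use Corollary~\ref{everything is chain URT in DTD}. Your explicit remark that small shifted staircases are absorbed into the rectangle/double-tailed diamond cases is a welcome clarification, but otherwise the argument matches the paper's dispatch over the classification.
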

\begin{proof}
If $\mathcal{P}$ is the Cayley-Moufang swivel or the bat, then it has no acyclic nodes, so $A = \emptyset$. Hence, in these cases, it suffices to verify that $M_\lambda$ is a URT in $\mathcal{P}$. This fact is a special case of Theorem~\ref{Minimally labeled URTs in Minuscules}.

If $\mathcal{P}$ is a rectangle, then $A = \{L,R\}$, and $M_\lambda$ is a $\{L,R\}$-chain URT  in $\mathcal{P}$ by Corollary~\ref{Grassmainian chain URT}.
If $\mathcal{P}$ is a double-tailed diamond, then $A = \{L,R\}$, and $M_\lambda$ is a $\{L,R\}$-chain URT in $\mathcal{P}$ by Corollary~\ref{everything is chain URT in DTD}.
Finally, if $\mathcal{P}$ is a shifted staircase with at least $10$ nodes,
then $M_\lambda$ is an $A$-chain URT in $\mathcal{P}$ by Corollary~\ref{large OG chain URT}.
\end{proof}

Proposition~\ref{slant sums with AB-chain URTs} allows us to extend Proposition~\ref{A chain URTs for minuscules} to show that minimally-labeled tableaux are unique rectification targets in iterated slant sums of minuscule posets.

\begin{theorem}
\label{Thm slant sum of minuscule URTs}
Let $\mathcal{P}$ be a $d$-complete poset. If $\mathcal{P}$ is an iterated slant sum of minuscule posets, then all minimally-labeled increasing $\mathcal{P}$-tableaux of straight shape are unique rectification targets.
\end{theorem}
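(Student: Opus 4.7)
The plan is to prove by induction on the number of minuscule components in the slant sum decomposition of $\mathcal{P}$ the stronger statement that $M_\lambda$ is an $A_\mathcal{P}$-chain URT, where $A_\mathcal{P}$ denotes the union of the sets of acyclic nodes across all minuscule components of $\mathcal{P}$. The base case, in which $\mathcal{P}$ is a single minuscule poset, is exactly Proposition~\ref{A chain URTs for minuscules}.

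For the inductive step, we choose a minuscule component $\mathcal{Q}$ that is a leaf of the slant sum decomposition tree of $\mathcal{P}$ (i.e., nothing has been slant-summed on top of $\mathcal{Q}$), so that $\mathcal{P} = \mathcal{P}' \slantsum{p} \mathcal{Q}$ with $\mathcal{P}'$ having one fewer minuscule component. By Proctor's classification of $d$-complete slant sum decompositions, $p$ must be an acyclic node of some irreducible component of $\mathcal{P}'$, so $p \in A_{\mathcal{P}'}$. For the $\mathcal{P}'$-side of the slant sum, we observe that any longest chain in $\lambda$ terminating at a node of $\mathcal{P}'$ lies entirely in $\mathcal{P}'$; hence $M_\lambda|_{\mathcal{P}'} = M_{\lambda \cap \mathcal{P}'}$, which is an $A_{\mathcal{P}'}$-chain URT by the inductive hypothesis.

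For the $\mathcal{Q}$-side, each longest chain in $\lambda$ terminating at $y \in \mathcal{Q}$ decomposes as a longest chain in $\lambda \cap \mathcal{P}'$ ending at $p$, followed by a longest chain in $\lambda \cap \mathcal{Q}$ from $\hat{0}_\mathcal{Q}$ to $y$; consequently, $M_\lambda|_{\mathcal{Q}}$ agrees with $M_{\lambda \cap \mathcal{Q}}$ after a uniform upward shift of every label by the constant $M_\lambda(p)$. Shifting all labels of a skew tableau by a constant $c$ merely prepends $c$ trivial swap steps to each slide, so rectifications of $S$ and $S + c$ differ only by the same shift; in particular, any constant shift of an $A$-chain URT is again an $A$-chain URT. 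Combined with Proposition~\ref{A chain URTs for minuscules} applied to $\mathcal{Q}$, this yields that $M_\lambda|_{\mathcal{Q}}$ is an $A_\mathcal{Q}$-chain URT. Proposition~\ref{slant sums with AB-chain URTs} now concludes that $M_\lambda$ is an $(A_{\mathcal{P}'} \cup A_\mathcal{Q})$-chain URT in $\mathcal{P}$, and since $A_\mathcal{P} = A_{\mathcal{P}'} \cup A_\mathcal{Q}$, the induction closes. The main care point is the shift-invariance of rectification, which must be verified directly from the definitions of $\mathsf{AddDots}$, $\mathsf{Swap}_{\bullet,n}$, and $\mathsf{RemoveDots}$; otherwise the proof is the assembly of the machinery from earlier sections with an appeal to Proctor's structural theorem guaranteeing that each leaf of the decomposition tree is attached at an acyclic node.
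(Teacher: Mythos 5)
Your proof is correct and closes the induction. It takes a genuinely, if mildly, different route from the paper: where you peel off a \emph{leaf} component $\mathcal{Q}$ of the slant-sum decomposition tree and apply part (1) of Proposition~\ref{slant sums with AB-chain URTs} to $\mathcal{P} = \mathcal{P}' \slantsum{p} \mathcal{Q}$, the paper instead peels off the \emph{root} component $\mathcal{M}$ (the irreducible component containing $\hat{0}_\mathcal{P}$), writes $\mathcal{P} = \mathcal{M} \slantsum{L} (\mathcal{L}_1, \dots, \mathcal{L}_\ell) \slantsum{R} (\mathcal{R}_1, \dots, \mathcal{R}_r)$, and applies part (2) of the same proposition. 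Your version reduces $n$ by exactly one at each step (so it is an ordinary induction), applies Proposition~\ref{A chain URTs for minuscules} to the stripped-off leaf, and needs only the single-slant-sum form of the gluing lemma; the paper's version applies Proposition~\ref{A chain URTs for minuscules} to the bottom component, appeals to strong induction since the branches $\mathcal{L}_i, \mathcal{R}_j$ have varying numbers of components, and needs the multi-slant-sum form. You also explicitly flag and justify the shift-invariance of rectification when restricting $M_\lambda$ to $\mathcal{Q}$, a point the paper dispatches with the parenthetical ``(modulo shifting the alphabet)''; making this explicit is a genuine improvement in rigor, and your chain-decomposition argument that $M_\lambda|_\mathcal{Q}$ is a uniform shift of $M_{\lambda \cap \mathcal{Q}}$ is correct. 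Both proofs rely on the same pair of key facts — Proctor's guarantee that slant sums of $d$-complete posets attach only at acyclic nodes, and the $p$-chain URT gluing proposition — so the two are equivalent in substance, differing only in which end of the decomposition tree anchors the induction.
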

\begin{proof}
We prove the stronger statement that all 
minimally-labeled increasing $\mathcal{P}$-tableaux of straight shape are $A$-chain URTs in $\mathcal{P}$, where $A$ denotes the set of acyclic nodes in $\mathcal{P}$.
We induct on the number $n$ of irreducible components in the slant sum decomposition of $\mathcal{P}$. For $\mathcal{Q}$ an irreducible component of $\mathcal{P}$, write $A_\mathcal{Q}$ for the set of acyclic nodes of $\mathcal{Q}$.

The base case, $n=1$ is provided by Proposition~\ref{A chain URTs for minuscules}.

Otherwise, $\mathcal{P}$ is the slant sum of irreducible components. One of these components contains the minimum $\hat{0}_\mathcal{P}$; call this component $\mathcal{M}$. By R.~Proctor's classification of acyclic nodes \cite{Proctor:JACO}, $\mathcal{M}$ has at most two acyclic nodes.
Then,
\[\mathcal{P} = \mathcal{M} \slantsum{L} \{ \mathcal{L}_1, \dots, \mathcal{L}_\ell \} \slantsum{R} \{\mathcal{R}_1, \dots, \mathcal{R}_r\},\]
where $L$ and $R$ are the acyclic nodes of $\mathcal{M}$ (if $L$ or $R$ is not an acyclic node, then we have $\ell = 0$ or $r=0$ respectively), and $\mathcal{L}_1, \dots, \mathcal{L}_\ell$ and $\mathcal{R}_1, \dots, \mathcal{R}_r$ are disjoint $d$-complete posets that are slant sum trees of minuscule components. Note that each $\mathcal{R}_i$ and $\mathcal{L}_j$ is a slant sum of strictly fewer than $n$ irreducible components. 

Suppose $T$ is a minimally-labeled increasing $\mathcal{P}$-tableau of straight shape. Then, $T|_{\mathcal{R}_i}$ is a minimally-labeled $\mathcal{R}_i$-tableau of straight shape (modulo shifting the alphabet), so by the inductive hypothesis, $T|_{\mathcal{R}_i}$ is an $A_{\mathcal{R}_i}$-chain URT in $\mathcal{R}_i$ for all $i$. Similarly, $T|_{\mathcal{L}_i}$ is an $A_{\mathcal{L}_i}$-chain URT in $\mathcal{L}_i$ for all $i$. Finally, $T|_\mathcal{M}$ is a minimally-labeled $\mathcal{M}$-tableau of straight shape, so by the inductive hypothesis it is $A_\mathcal{M}$-chain URT in $\mathcal{M}$. Thus, by Proposition~\ref{slant sums with AB-chain URTs}, we have that $T$ is an $A$-chain URT in $\mathcal{P}$, where $A$ is the set of acyclic nodes in $\mathcal{P}$.
\end{proof}

The following is the precise version of Theorem~\ref{thm:main}.

\begin{corollary}
\label{Cor slant sum of minuscule ideals URTs}
Let $\mathcal{P}$ be a $d$-complete poset. If $\mathcal{P}$ is an iterated slant sum of minuscule posets and $\mathcal{Q} \subseteq \mathcal{P}$ is an order ideal, then all minimally-labeled increasing $\mathcal{Q}$-tableaux of straight shape are unique rectification targets.
\end{corollary}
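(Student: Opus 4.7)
The plan is to derive Corollary~\ref{Cor slant sum of minuscule ideals URTs} directly from Theorem~\ref{Thm slant sum of minuscule URTs} by showing that rectification inside $\mathcal{Q}$ coincides with rectification inside the ambient $\mathcal{P}$. The main observation is that the jeu-de-taquin operations of Section~\ref{Section posets, skew shapes, and rectifications} are entirely local, so enlarging the ambient poset beyond $\mathcal{Q}$ cannot create new possibilities when the tableau is supported in $\mathcal{Q}$.

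First, I would record the following bookkeeping. Since $\mathcal{Q}$ is an order ideal of $\mathcal{P}$, every order ideal of $\mathcal{Q}$ is an order ideal of $\mathcal{P}$, and conversely any order ideal of $\mathcal{P}$ contained in $\mathcal{Q}$ is an order ideal of $\mathcal{Q}$. In particular, every skew shape $\nu/\lambda$ in $\mathcal{Q}$ is also a skew shape in $\mathcal{P}$, and the set of inner corners $\mathrm{IC}(\nu/\lambda)$, being the maximal elements of $\lambda$, is the same whether computed in $\mathcal{Q}$ or in $\mathcal{P}$. Moreover, for $\lambda \subseteq \mathcal{Q}$, the minimally-labeled tableau $M_\lambda$ is the same function whether we regard it as a $\mathcal{Q}$-tableau or as a $\mathcal{P}$-tableau, because its entries are determined solely by the intrinsic poset structure of $\lambda$.

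The key step is to verify that for any skew increasing $\mathcal{Q}$-tableau $T$, one has $\mathsf{rects}_\mathcal{Q}(T) = \mathsf{rects}_\mathcal{P}(T)$. This reduces to checking that a single slide $\mathsf{Slide}_\gamma$ yields the same output whether performed in $\mathcal{Q}$ or in $\mathcal{P}$. The operations $\mathsf{AddDots}_\gamma$ and $\mathsf{RemoveDots}$ depend only on the intrinsic set of labeled/dotted positions, and each $\mathsf{Swap}_{\bullet,n}$ acts only by interchanging a label $n$ with a dot at a covering pair $y \lessdot x$ where $T(y)=n$, $T(x)=\bullet$. Since the support of the tableau is contained in $\mathcal{Q}$ and $\mathcal{Q}$ inherits its covering relations from $\mathcal{P}$, each swap in $\mathcal{P}$ is available in $\mathcal{Q}$ and vice versa; and because both labels and dots move only between positions already in the support, the support remains inside $\mathcal{Q}$ after each swap. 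An inductive argument on the number of slides then shows that iterated rectifications agree, so $\mathsf{rects}_\mathcal{Q}(T)=\mathsf{rects}_\mathcal{P}(T)$ exactly.

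Combining these two points, $M_\lambda$ is a URT in $\mathcal{Q}$ if and only if it is a URT in $\mathcal{P}$. Theorem~\ref{Thm slant sum of minuscule URTs} applies to $\mathcal{P}$ and furnishes the latter, completing the proof. The main obstacle is simply the bookkeeping verification that slides commute with the embedding $\mathcal{Q} \hookrightarrow \mathcal{P}$, but no new combinatorial ideas are needed: every subtlety has already been absorbed into the locality of jeu de taquin and the fact that an order ideal of an order ideal is an order ideal.
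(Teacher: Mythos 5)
Your proof follows the same route as the paper's (which is even terser): observe that $M_\lambda$ is also a minimally-labeled $\mathcal{P}$-tableau of straight shape, apply Theorem~\ref{Thm slant sum of minuscule URTs} to get that $M_\lambda$ is a URT in $\mathcal{P}$, and conclude it is a URT in $\mathcal{Q}$ by the locality of jeu de taquin, which you helpfully spell out. One small caveat: your phrase ``$M_\lambda$ is a URT in $\mathcal{Q}$ if and only if it is a URT in $\mathcal{P}$'' is an overclaim, since the reverse implication would require that every skew $\mathcal{P}$-tableau rectifying to $M_\lambda$ be supported inside $\mathcal{Q}$ (which need not hold); fortunately you only use the forward direction, which your argument does establish.
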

\begin{proof}
Let $M_\lambda$ be a minimally-labeled  increasing $\mathcal{Q}$-tableau of straight shape. Since $\mathcal{Q}$ is an order ideal of $\mathcal{P}$, $M_\lambda$ is also a minimally labeled increasing $\mathcal{P}$-tableau of straight shape. Hence by Theorem~\ref{Thm slant sum of minuscule URTs}, $M_\lambda$ is a unique rectification target in $\mathcal{P}$, so it is a unique rectification target in $\mathcal{Q}$.
\end{proof}

Finally, we recall the construction necessary to make precise sense of Conjecture~\ref{conj:geometry}. Let $\mathcal{P}$ be any poset satisfying the conclusion of Conjecture~\ref{conj:URT_precise}. Then, as in \cite[\textsection 3.5]{BS16}, we construct a \emph{combinatorial $K$-theory ring} associated to $\mathcal{P}$. Let $K(\mathcal{P})$ be the free abelian group on the set of order ideals of $\mathcal{P}$. Define a product structure on $K(\mathcal{P})$ by setting
\[
\lambda \cdot \mu \coloneqq \sum_\nu t_{\lambda,\mu}^\nu  \; \nu,
\]
where the Greek letters denote order ideals of $\mathcal{P}$ and 
$
t_{\lambda,\mu}^\nu
$
is defined to be $(-1)^{|\nu| - |\lambda| - |\mu|}$ times the number of skew increasing $\mathcal{P}$-tableaux of shape $\nu / \lambda$ that rectify to the minimally-labeled tableau $M_\mu$. (Since $M_\mu$ is by hypothesis a URT in $\mathcal{P}$, this number is well-defined.) By \cite[Proposition~3.17]{BS16}, this product structure makes $K(\mathcal{P})$ into a commutative associative   algebra with the empty order ideal as multiplicative identity. Conjecture~\ref{conj:geometry} claims then that, when $\mathcal{P}$ is $d$-complete, the structure constants of the algebra $K(\mathcal{P})$ coincide with corresponding $\Lambda$-minuscule Schubert structure constants of the $K$-theory ring $K(X)$, where $X = G/P$ is a Kac-Moody homogeneous space, $w \in W^P$ is a $\Lambda$-minuscule Weyl group element for $P$, and $\mathcal{P}$ is the poset of join irreducibles of the distributive lattice $[{\rm id}, w]$.

\section*{Acknowledgements}
The paper derives from a Summer 2017 DIMACS REU project. R.I. and M.Z. participated in this project under the direction of O.P. with funding provided by the Mathematics Department at Rutgers University. We would like to thank Anders Buch, Lazaros Gallos, and Parker Hund for their roles in organizing and running the REU program. 

O.P. is grateful to Bob Proctor and Alexander Yong for inspiring conversations, and to Jake Levinson for helpful comments on exposition.
O.P. was partially supported by an NSF Mathematical Sciences Postdoctoral Research Fellowship \#1703696.
\bibliographystyle{alpha}
\nocite{*}
\bibliography{bib}
\end{document}